\title[Chebyshev polynomials]{Chebyshev polynomials in the complex plane and on the real line}
\author{Olof Rubin}
\address{Division of Probability, Mathematical Physics and Statistic, KTH Royal institute of Technology, Stockholm, Sweden}
\email{orubin@kth.se}
\subjclass[2020]{41A50, 30C10}
\keywords{Chebyshev polynomials, Widom factors, zeros of polynomials}
\date{February 2026}
\newcommand{\C}{\mathbb{C}}
\newcommand{\D}{\mathbb{D}}
\newcommand{\N}{\mathbb{N}}
\newcommand{\R}{\mathbb{R}}
\newcommand{\bbT}{\mathbb{T}}
\newcommand{\RS}{\overline{\mathbb{C}}}
\newcommand{\cE}{\mathcal{E}}
\newcommand{\cM}{\mathcal{M}}
\newcommand{\cW}{\mathcal{W}}
\newcommand{\sfE}{\mathsf{E}}
\newcommand{\capacity}{\operatorname{Cap}}
\newcommand{\supp}{\operatorname{supp}}
\newcommand{\sgn}{\operatorname{sign}}
\newcommand{\parreauwidom}{\mathcal{PW}}
\renewcommand{\Re}{\operatorname{Re}}
\renewcommand{\Im}{\operatorname{Im}}
\newtheorem{theorem}{Theorem}[section]
\newtheorem{lemma}[theorem]{Lemma}
\newtheorem{corollary}[theorem]{Corollary}
\newtheorem{problem}[theorem]{Problem}
\theoremstyle{remark}
\newtheorem{remark}[theorem]{Remark}
\begin{document}

\begin{abstract}
We present a survey of central developments in the theory of Chebyshev polynomials, 
introduced by P.~L.~Chebyshev and later extended to the complex plane by G.~Faber. 
Our primary focus is their defining extremal property: among all polynomials with a prescribed leading coefficient, 
they minimize the supremum norm on a given compact set. 
Although we do not present new results, we provide --- in selected cases --- new proofs of known theorems and compile a collection of open problems.
\end{abstract}

\maketitle

\section{Introduction}
In 1854 Pafnuty Chebyshev \cite{chebyshev54} introduced the problem of ``best approximation'', which amounts to the following:

\begin{problem}
	Given a continuous function $f:[-1,1]\rightarrow \R$ and a natural number $n\in \N$, determine parameters $a_0,\dotsc,a_{n-1}$ such that
	\[\max_{x\in [-1,1]}|f(x)-a_0-a_1x-\cdots-a_{n-1}x^{n-1}|\]
	is minimal.
	\label{prob:approximating_continuous_functions}
\end{problem}
In modern terms, this minimal deviation represents the distance between the function~$f$ and the space of polynomials of degree at most~$n-1$, measured with respect to the supremum norm on the interval $[-1,1]$. As a consequence of Weierstra{\ss}' approximation theorem, proven in \cite{weierstrass85}, we know that this minimal deviation tends to zero as $n\rightarrow\infty$ for any fixed continuous function $f$. A construction of Bernstein, detailed in \cite{bernstein12}, provides such a sequence explicitly in terms of the given function $f$. Chebyshev, however, had a different focus: his aim was to determine the actual minimizer rather than merely a sufficiently close approximant.
Curiously, Chebyshev's work predates Weierstra\ss' by more than thirty years.
Given the inherent complexity of the general formulation of Problem~\ref{prob:approximating_continuous_functions}, Chebyshev simplifies the conditions, motivated by a Taylor series consideration, and arrives at the following reduced problem.
\begin{problem}
	Given a natural number $n\in \N$, determine parameters $a_0,\dotsc,a_{n-1}$ such that
	\[\max_{x\in [-1,1]}|x^n-a_0-a_1x-\cdots-a_{n-1}x^{n-1}|\]
	is minimal.
	\label{prob:classical_chebyshev_polynomial}
\end{problem}
He further shows that the (as it turns out) unique solution to Problem \ref{prob:classical_chebyshev_polynomial} is given by the coefficients of the polynomial (all square root terms cancel)
\[a_0+\cdots+a_{n-1}x^{n-1} = x^n-\frac{1}{2^{n}}\left[\Bigl(x+\sqrt{x^2-1}\Bigr)^n+\Bigl(x-\sqrt{x^2-1}\Bigr)^n\right].\]
Suitably rephrased,
\begin{equation}
	T_n(x) = \frac{1}{2^{n}}\left[\Bigl(x+\sqrt{x^2-1}\Bigr)^n+\Bigl(x-\sqrt{x^2-1}\Bigr)^n\right]
	\label{eq:chebyshev_first_kind}
\end{equation}
is the monic polynomial of degree~$n$, which minimizes the supremum norm on $[-1,1]$. The use of the letter $T$ to denote these polynomials stems from the French transliteration \emph{Tsch\'{e}bycheff}. Today, the polynomial \eqref{eq:chebyshev_first_kind} is referred to as the \emph{Chebyshev polynomial of the first kind} of degree~$n$. An alternative representation is obtained by writing $x = \cos \theta\in [-1,1]$ in which case
\[x\pm\sqrt{x^2-1} = \cos \theta\pm i\sin\theta.\]
This leads to the well-known formula
\begin{equation}
	T_n(x) =  \frac{e^{in\theta}+e^{-in\theta}}{2^n} = 2^{1-n}\cos n\theta.
	\label{eq:chebyshev_first_kind_trigonometry_formula}
\end{equation}

In Section~\ref{sec:cheb_background} we will show that this formula indeed defines an algebraic polynomial in the variable~\(x\).
From \eqref{eq:chebyshev_first_kind_trigonometry_formula} it follows that the polynomials \(T_n\) attain the values
\(2^{1-n}\) and~\(-2^{1-n}\) alternately at \(n+1\) consecutive points in the interval \([-1,1]\).
This provides the first instance---though certainly not the last---of the phenomenon that extremal values tend to alternate
for minimal polynomials. As it turns out, this alternation property is in fact characteristic of best approximations with
respect to the supremum norm.


While Chebyshev's intentions in \cite{chebyshev54} were to perform a theoretical study of a mechanical problem related to Watt's straight-line mechanism, his mathematical investigations are considerably deepened in \cite{chebyshev59}. There he introduces the following problem:

\begin{problem}
	Given a continuous function $f:[-1,1]\rightarrow \R$, a polynomial~$P$ which is strictly positive on $[-1,1]$, and a natural number~$n\in \N$, determine parameters~$a_0,\dotsc,a_{n-1}$ such that
	\[\max_{x\in [-1,1]}\frac{\left|f(x)-a_0-a_1x-\cdots-a_{n-1}x^{n-1}\right|}{P(x)}\]
	is minimal.
	\label{prob:weighted_chebyshev_approximation}
\end{problem}
Again, of special interest to Chebyshev is the reduction to the case \(f(x)=x^{n}\).
In this setting, he determines the exact solution of the problem, a result to which we return in Section~\ref{subs:markov_bernstein}.
A detailed account of Chebyshev's contributions to approximation theory can be found in \cite{goncharov00}.

Following Chebyshev's groundbreaking work, these problems were extended in a variety of directions.
In particular, we mention the works of Markov~\cite{markov84,markov48}, Borel~\cite{borel05}, Faber~\cite{faber20}, Achieser~\cite{achieser28,achieser56}, Bernstein~\cite{bernstein30,bernstein31}, and Widom~\cite{widom69}.
For a historical overview of the early development of the subject by Chebyshev's students, we refer the reader to \cite{achieser87}.

While the polynomial \(T_n\) defined by \eqref{eq:chebyshev_first_kind_trigonometry_formula} exhibits many remarkable properties (see, for instance, \cite{rivlin90}), the primary aim of this text is to survey both classical and modern results on Chebyshev polynomials in the complex plane from the perspective 
that they minimize the supremum norm among all polynomials with the same leading coefficient. 
In particular, we study the complex generalization introduced by Faber \cite{faber20}.
These polynomials are also referred to as Chebyshev polynomials and include the classical Chebyshev polynomials on an interval as special cases.

Our considerations are exclusively in one variable. We refer the reader to \cite{alpan-bayraktar-levenberg26,zakharyuta75,zakharyuta12} and references therein for more on the multivariate case.

Chebyshev polynomials constitute only one instance of best approximation with respect to the supremum norm.
Nevertheless, many of their characteristic properties persist in the solutions of the more general
Problems~\ref{prob:approximating_continuous_functions} and~\ref{prob:weighted_chebyshev_approximation},
as well as in subsequent generalizations.

For the general theory of approximation, we invite the reader to consult \cite{achieser56,cheney66,iske18,lorentz86,rivlin90}.
We emphasize, however, that Chebyshev polynomials are also of independent interest,
motivated, for instance, by their fundamental connections to spectral theory, 
numerical analysis, and potential theory.

\section{Fundamental properties}
\label{sec:cheb_background}
This section covers classical theory. Here, we will settle questions concerning existence and uniqueness of solutions to the problems initially suggested by Chebyshev and their subsequent generalizations, where a monic minimizer is sought. For a recent account of weighted Chebyshev polynomials in the complex plane, we refer the reader to \cite{novello-schiefermayr-zinchenko21}. 

\subsection{An extension of the concept, existence and uniqueness}
Following the notation of \cite{christiansen-simon-zinchenko-I,totik-yuditskii15,widom69}, we let $\sfE\subset \C$ denote a compact subset of the complex plane and $w:\sfE\rightarrow[0,\infty)$ a bounded function on $\sfE$. For any given natural number $n\in \N$ we introduce the quantity
\begin{equation}
	t_n(\sfE,w)\coloneqq \inf_{a_k\in \C}\sup_{z\in \sfE}\Bigl(w(z)\bigl|z^n+a_{n-1}z^{n-1}+\cdots+a_0\bigr|\Bigr).
	\label{eq:chebyshev_norm}
\end{equation}
Notice that we have replaced the minus signs in Problem \ref{prob:weighted_chebyshev_approximation} with plus signs. While this is merely a change in perspective, it signifies that we are now focusing on minimal monic polynomials rather than approximating monomials using polynomials of lower degree. Recall that a function~$f:\sfE\rightarrow \R$ is upper semicontinuous if
\[\limsup_{\substack{\zeta\rightarrow z \\
\zeta\in \sfE}}f(\zeta)\leq f(z)\]
holds at any point~$z\in \sfE$. For future reference, a function \(g:\sfE\to\R\) is lower semicontinuous if \(-g\) is upper semicontinuous. By defining 
\[\hat{f}(z) = \lim_{r\rightarrow 0^+}\Bigl(\sup_{\substack{|z-\zeta|<r\\ \zeta\in \sfE}}f(\zeta)\Bigr)\]
we obtain what is called the upper semicontinuous regularization of $f$. The functions $f$ and $\hat{f}$ share global upper bounds meaning that $\|f\|_\sfE = \|\hat{f}\|_\sfE$ where $\|\cdot\|_\sfE$ denotes the uniform norm on $\sfE$. Importantly for us, if $g:\sfE\rightarrow\R$ is continuous then
\begin{equation}
	\|fg\|_\sfE = \|\hat{f}g\|_\sfE,
	\label{eq:max_absolute_value_upper_sem}
\end{equation}
see e.g. \cite[Lemma 1]{novello-schiefermayr-zinchenko21}. Hence \eqref{eq:max_absolute_value_upper_sem} implies that
\[t_n(\sfE,w) = t_n(\sfE,\hat{w})\]
and furthermore if $a_{0}^\ast,\dotsc,a_{n-1}^\ast$ is an optimal configuration for the minimization of \eqref{eq:chebyshev_norm} with respect to $w$ then this configuration is also optimal with respect to $\hat{w}$. Upper semicontinuous functions always attain their maximum on compact sets \cite[Theorem 2.1.2]{ransford95} and therefore
\[t_n(\sfE,\hat{w})\coloneqq \inf_{a_k\in \C}\max_{z\in \sfE}\Bigl(\hat{w}(z)\bigl|z^n+a_{n-1}z^{n-1}+\cdots+a_0\bigr|\Bigr).\]
Whenever necessary we may replace $w$ by its upper semicontinuous regularization. This doesn't change the value of $t_n(\mathsf{E},w)$ and any optimal configuration remains the same.

At the outset it is not clear that minimizing parameters of \eqref{eq:chebyshev_norm} exist. We begin by showing that this infimum is indeed a minimum. 

If $w$ is positive on at least~$n+1$ distinct points of $\mathsf{E}$, then \[\|wP\|_{\sfE}\coloneqq\sup_{z\in \sfE}\Bigl(w(z)\left|P(z)\right|\Bigr)\]
defines a norm on the finite-dimensional space of polynomials of degree at most $n$. Since norms on finite-dimensional spaces are equivalent (see e.g. \cite[Theorem III.3.1]{conway90}) we conclude the existence of a positive constant $C>1$ such that
\begin{align}
	\begin{split}
		C^{-1}\sup_{z\in \sfE}&\Bigl(w(z)\bigl|a_nz^n+a_{n-1}z^{n-1}+\cdots+a_0\bigr|\Bigr)\\ & \leq \sum_{j=0}^{n}|a_j|\leq C\sup_{z\in \sfE}\Bigl(w(z)\bigl|a_nz^n+a_{n-1}z^{n-1}+\cdots+a_0\bigr|\Bigr)
	\end{split}
	\label{eq:equivalent_norms}	
\end{align}
for any choice of coefficients $a_0,\dotsc,a_{n-1},a_n$. Choose sequences $\{a_j^{(k)}\}_k$ for $j=0,\dotsc,n-1$ such that
\begin{align*}
\lim_{k\rightarrow \infty }\sup_{z\in \sfE}&\Bigl(w(z)\bigl|z^n+a_{n-1}^{(k)}z^{n-1}+\cdots+a_0^{(k)}\bigr|\Bigr) = t_n(\sfE,w).
\end{align*}
It is clear from \eqref{eq:equivalent_norms} that each sequence $\{a_j^{(k)}\}_k$ is bounded. Bolzano-Weierstra{\ss}' theorem implies that there exists a subsequence $k_l$ with the property that $\{a_j^{(k_l)}\}$ has a limit as~\(l\to \infty\) for every $j = 0,\dotsc,n-1$. We introduce the limits $a_j^\ast$ for $j=0,\dotsc,n-1$ so that
\[\lim_{l\to \infty}a_j^{(k_l)} = a_j^\ast.\]
Again, \eqref{eq:equivalent_norms} implies that
\[\sup_{z\in \sfE}\left(w(z)\Bigl|(a_{n-1}^{(k_l)}-a_{n-1}^\ast)z^{n-1}+\cdots +(a_0^{(k_l)}-a_0^\ast)\Bigr|\right)\leq C\sum_{j=0}^{n-1}|a_j^{(k_l)}-a_j^\ast|\rightarrow 0\]
as $l\rightarrow \infty$ and therefore
\[\sup_{z\in \sfE}\Bigl(w(z)\bigl|z^n+a_{n-1}^\ast z^{n-1}+\cdots+a_0^\ast\bigr|\Bigr) = t_n(\sfE,w).\] 
This establishes the existence of a minimizer. We proceed by showing that such a minimizer is unique. The following lemma will be useful in this regard.

\begin{lemma}
Assume that $\mathsf{E} \subset \mathbb{C}$ is a compact set, and let $w : \mathsf{E} \to [0, \infty)$ be an upper semicontinuous function that is positive on at least distinct $n+1$ points. Let $a_0^\ast, \dotsc, a_{n-1}^\ast \in \mathbb{C}$ be such that
	\begin{equation}
		\max_{z\in \sfE}\Bigl(w(z)\bigl|z^n+a_{n-1}^\ast z^{n-1}+\cdots+a_0^\ast\bigr|\Bigr) = t_n(\sfE,w)
		\label{eq:lem_minimimizing_coeff}
	\end{equation}
	then there are (at least) $n+1$ distinct points $z_1,\dotsc,z_{n+1}$ in $\sfE$ such that
	\[\Bigl(w(z_j)\bigl|z_j^n+a_{n-1}^\ast z_j^{n-1}+\cdots+a_0^\ast\bigr|\Bigr)  = t_n(\sfE,w),\quad j=1,\dotsc,n+1.\]
	\label{lem:nbr_extremal_points}
\end{lemma}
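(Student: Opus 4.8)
The plan is to argue by contradiction: suppose that the set of extremal points
\[
\sfA := \{z \in \sfE : w(z)\,|P^\ast(z)| = t_n(\sfE,w)\},
\qquad P^\ast(z) = z^n + a_{n-1}^\ast z^{n-1} + \cdots + a_0^\ast,
\]
contains at most $n$ points, say $\sfA = \{z_1,\dotsc,z_m\}$ with $m \le n$. (Note $\sfA$ is nonempty since $\hat w$, hence $\hat w |P^\ast|$, attains its maximum on the compact set $\sfE$, and we may assume $w$ is upper semicontinuous by the reduction made just above the lemma.) The idea is then to perturb $P^\ast$ by a polynomial $q$ of degree at most $n-1$ so that $P^\ast - \varepsilon q$ is strictly better than $P^\ast$, contradicting \eqref{eq:lem_minimimizing_coeff}. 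Since there are only $m \le n$ extremal points and the space of polynomials of degree $\le n-1$ has dimension $n > m-1$, by a dimension count (or by Lagrange interpolation) there is a polynomial $q$ of degree $\le n-1$ with $q(z_j) = \overline{P^\ast(z_j)}$ for $j = 1,\dotsc,m$; more precisely I want $\realpart\!\big(\overline{P^\ast(z_j)}\,q(z_j)\big) > 0$ at every extremal point, which the choice $q(z_j) = \overline{P^\ast(z_j)}$ achieves as long as $P^\ast(z_j) \ne 0$ — and that holds because $t_n(\sfE,w) > 0$ (here one uses that $w$ does not vanish at $\ge n+1$ points, so $\|wP\|_\sfE$ is a norm and $t_n(\sfE,w) = \inf \|wP^\ast\|_\sfE$ over monic $P$ is positive, as any monic polynomial is nonzero).

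Next I would quantify the improvement. On a small closed neighbourhood $U$ of $\sfA$ in $\sfE$, continuity of $P^\ast$ and $q$ gives $\realpart\!\big(\overline{P^\ast(z)}\,q(z)\big) \ge c > 0$, hence for small $\varepsilon > 0$,
\[
|P^\ast(z) - \varepsilon q(z)|^2 = |P^\ast(z)|^2 - 2\varepsilon\,\realpart\!\big(\overline{P^\ast(z)}q(z)\big) + \varepsilon^2 |q(z)|^2
\le |P^\ast(z)|^2 - \varepsilon c
\]
on $U$ (after shrinking $\varepsilon$), so multiplying by $w(z)^2 \le \|w\|_\sfE^2$ keeps $w(z)|P^\ast(z) - \varepsilon q(z)|$ strictly below $t_n(\sfE,w)$ on $U$. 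On the complement $\sfE \setminus U$, which is compact, $\hat w|P^\ast|$ attains a maximum strictly less than $t_n(\sfE,w)$ by the very definition of $\sfA$ and upper semicontinuity; call it $t_n(\sfE,w) - \delta$. Then for $\varepsilon$ small enough, $w(z)|P^\ast(z) - \varepsilon q(z)| \le w(z)|P^\ast(z)| + \varepsilon \|w\|_\sfE \|q\|_\sfE \le t_n(\sfE,w) - \delta/2$ on $\sfE \setminus U$. Combining the two regions shows $\|w(P^\ast - \varepsilon q)\|_\sfE < t_n(\sfE,w)$, contradicting the minimality in \eqref{eq:lem_minimimizing_coeff}. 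Therefore $\sfA$ has at least $n+1$ points.

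The main obstacle — and the one subtlety worth handling carefully — is the passage from "few extremal points" to a uniform gap on the complement $\sfE \setminus U$: this is exactly where upper semicontinuity of $w$ (equivalently, the reduction to $\hat w$) is essential, since $w |P^\ast|$ need not attain its supremum and its sublevel behaviour off $\sfA$ must be controlled; working with $\hat w$, which is upper semicontinuous and attains its max on the compact set $\sfE \setminus U$, resolves this. A secondary point is ensuring $P^\ast$ does not vanish at any extremal point, which is why the hypothesis that $w$ is nonzero at $\ge n+1$ points (guaranteeing $t_n(\sfE,w) > 0$) is invoked. Everything else is a routine perturbation-and-dimension-count argument.
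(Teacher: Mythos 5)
Your overall strategy is the same as the paper's: assume for contradiction that there are at most $n$ extremal points, interpolate a polynomial of degree at most $n-1$ at those points, and show that $P^\ast-\varepsilon q$ has strictly smaller weighted norm for small $\varepsilon>0$. The reduction to $\hat w$, the positivity of $t_n(\sfE,w)$, and the treatment of $\sfE\setminus U$ via compactness and upper semicontinuity are all handled correctly. There is, however, one step that fails as written: you require $\realpart\bigl(\overline{P^\ast(z_j)}\,q(z_j)\bigr)>0$ at each extremal point, but the choice $q(z_j)=\overline{P^\ast(z_j)}$ gives $\overline{P^\ast(z_j)}\,q(z_j)=\overline{P^\ast(z_j)^2}$, whose real part equals $(\realpart P^\ast(z_j))^2-(\impart P^\ast(z_j))^2$ and can be negative or zero (e.g.\ if $P^\ast(z_j)$ is purely imaginary); at such a point the perturbation $-\varepsilon q$ would \emph{increase} $|P^\ast|$. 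The correct choice is $q(z_j)=P^\ast(z_j)$ with no conjugate, which yields $|P^\ast(z_j)|^2>0$ and is precisely the paper's interpolation condition $Q(z_j)=T(z_j)$. With that one-character repair the criterion you stated is met and the argument goes through.

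A secondary point: your bound on the neighbourhood $U$ is pointwise rather than uniform. From $|P^\ast-\varepsilon q|^2\le|P^\ast|^2-\varepsilon c$ you obtain $w(z)^2|P^\ast(z)-\varepsilon q(z)|^2\le t_n(\sfE,w)^2-\varepsilon c\,w(z)^2$, which degenerates to $\le t_n(\sfE,w)^2$ where $w(z)$ is small, so strictness of the \emph{supremum} over $U$ needs one extra line (where $w$ is small, $w|P^\ast-\varepsilon q|\le w\cdot(\|P^\ast\|_U+\varepsilon\|q\|_U)$ is itself small, so split $U$ accordingly). The paper avoids this entirely by writing $T-\varepsilon Q=(1-\varepsilon)T+\varepsilon(T-Q)$ and applying the triangle inequality, which gives the uniform bound $(1-\varepsilon/2)\,t_n(\sfE,w)$ on the neighbourhood of the extremal points in one stroke; you may find that decomposition cleaner than the quadratic expansion.
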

\begin{proof}
We argue by contradiction. Let
\[
T(z)=z^n+a_{n-1}^\ast z^{n-1}+\cdots+a_0^\ast,
\]
where the coefficients \(a_0^\ast,\dots,a_{n-1}^\ast\) are chosen so that \(T\) satisfies
\eqref{eq:lem_minimimizing_coeff}.
	 Assume, in order to derive a contradiction, that there are fewer than $n+1$ points in $\sfE$ where $w|T|$ attains the value $t_n(\sfE,w)$. Denote these points with $z_1,\dotsc,z_m$, so that $m<n+1$. 
	The polynomial
	\[Q(z)=\sum_{j=1}^{m}T(z_j)\prod_{\substack{k=1\\k\neq j}}^{m}\frac{z-z_k}{z_j-z_k}\]
	has degree at most \(m-1\leq n-1\) and satisfies
	$Q(z_j) = T(z_j)$ for $j=1,\dotsc,m$. We consider the perturbed polynomial
	\[T(z)-\varepsilon Q(z)\]
	where $\varepsilon>0$. This difference is a monic polynomial of degree $n$ since $Q$ has degree at most~$n-1$. The triangle inequality applied to the absolute value of the difference at a point~$z$ ensures us that 
	\begin{equation}
		w(z)|T(z)-\varepsilon Q(z)| \leq  (1-\varepsilon)w(z)|T(z)|+\varepsilon w(z)|T(z)-Q(z)|.
		\label{eq:triangle_inequality_perturbation}
	\end{equation}
	Since $T(z_j)-Q(z_j) = 0$ we can find an open neighborhood \(U\) of \(\{z_1,\dotsc,z_m\}\) in \(\sfE\) such that if \(z\in U\), then
	\[w(z)|T(z)-Q(z)|<\frac{t_n(\sfE,w)}{2}.\]
	On the one hand, we obtain from \eqref{eq:triangle_inequality_perturbation} that if \(z\in U\)
	\begin{equation}
		w(z)|T(z)-\varepsilon Q(z)|\leq \left(1-\frac{\varepsilon}{2}\right)t_n(\sfE,w).
		\label{eq:upper_bound_in_delta_domain}
	\end{equation}
	On the other hand there exists some $0<\rho<1$ such that if $z\in \sfE\setminus U$, then \[w(z)|T(z)|\leq (1-\rho)t_n(\sfE,w).\] Here is a subtle yet crucial point: the choice of $\rho$ does not need to account for $\varepsilon$, as it depends solely on $U$. We conclude that, uniformly for $z\notin U$,
	\begin{equation}
		w(z)|T(z)-\varepsilon Q(z)|\leq (1-\rho)t_n(\sfE,w)+\varepsilon \|wQ\|_{\sfE}.
		\label{eq:upper_bound_outside_delta_domain}
	\end{equation}
	Combining \eqref{eq:upper_bound_in_delta_domain} and \eqref{eq:upper_bound_outside_delta_domain} it is clear that by letting $\varepsilon>0$ be sufficiently small, we can  obtain the inequality
	\[\|w(T-\varepsilon Q)\|_{\sfE}<t_n(\sfE,w)\]
	but this contradicts the assumed minimality of $T$.
\end{proof}

With Lemma \ref{lem:nbr_extremal_points} at hand, we can now easily show that there is only one monic polynomial whose weighted deviation from zero is the smallest on a given compact set.

\begin{theorem}
	Let $\sfE\subset \C$ denote a compact set and $w:\sfE\rightarrow [0,\infty)$ a bounded function which is positive for at least $n+1$ points of $\sfE$. There exists a unique monic polynomial of the form
	\[T_n^{\sfE, w}(z) = z^n+a_{n-1}^\ast z^{n-1}+\cdots + a_0^\ast\]
	such that
	\[\|wT_n^{\sfE,w}\|_{\sfE} = t_n(\sfE,w).\] 
	\label{thm:chebyshev_existence_uniqueness}
\end{theorem}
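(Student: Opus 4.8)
The existence of a minimizer was already established in the discussion preceding Lemma~\ref{lem:nbr_extremal_points}, so the plan is to concentrate on uniqueness, which I would deduce from Lemma~\ref{lem:nbr_extremal_points} together with the equality case of the triangle inequality in $\C$. As remarked earlier we may assume without loss of generality that $w$ is upper semicontinuous, since replacing $w$ by $\hat w$ changes neither $t_n(\sfE,w)$ nor the set of optimal configurations, so Lemma~\ref{lem:nbr_extremal_points} is applicable. I would also first record the simple fact that $t_n(\sfE,w)>0$: a monic polynomial of degree $n$ has exactly $n$ zeros and therefore cannot vanish at all of the $n+1$ (or more) points of $\sfE$ at which $w$ is non-zero, so $w|P|$ is strictly positive somewhere for every monic $P$ of degree $n$.

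Now suppose, for contradiction, that $T_n^{\sfE,w}$ and $\widetilde T_n$ are two \emph{distinct} monic polynomials of degree $n$ with $\|wT_n^{\sfE,w}\|_{\sfE}=\|w\widetilde T_n\|_{\sfE}=t_n(\sfE,w)$. Their average $S:=\tfrac12\big(T_n^{\sfE,w}+\widetilde T_n\big)$ is again a monic polynomial of degree $n$, and for every $z\in\sfE$ the triangle inequality gives
\[
w(z)\,|S(z)|\le \tfrac12\,w(z)\,|T_n^{\sfE,w}(z)|+\tfrac12\,w(z)\,|\widetilde T_n(z)|\le t_n(\sfE,w),
\]
so $S$ is itself a minimizer. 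Applying Lemma~\ref{lem:nbr_extremal_points} to $S$ produces points $z_1,\dots,z_{n+1}\in\sfE$ with $w(z_j)\,|S(z_j)|=t_n(\sfE,w)$ for $j=1,\dots,n+1$. Since $t_n(\sfE,w)>0$ we have $w(z_j)>0$ at each of these points, and the chain of inequalities above must collapse to equalities there; in particular $|T_n^{\sfE,w}(z_j)|=|\widetilde T_n(z_j)|$ while $\big|T_n^{\sfE,w}(z_j)+\widetilde T_n(z_j)\big|=|T_n^{\sfE,w}(z_j)|+|\widetilde T_n(z_j)|$. Hence $T_n^{\sfE,w}-\widetilde T_n$ vanishes at the $n+1$ distinct points $z_1,\dots,z_{n+1}$, but this difference is a polynomial of degree at most $n-1$, so it vanishes identically, contradicting $T_n^{\sfE,w}\ne\widetilde T_n$.

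The one step that deserves care is the equality case of the triangle inequality for complex numbers: from $|a+b|=|a|+|b|$ with $a,b\in\C$ one concludes that $a$ and $b$ are non-negative real scalar multiples of one another, and combining this with $|a|=|b|$ forces $a=b$; this is exactly what makes $T_n^{\sfE,w}(z_j)=\widetilde T_n(z_j)$ rather than merely $|T_n^{\sfE,w}(z_j)|=|\widetilde T_n(z_j)|$. Everything else is routine — that $S$ remains monic of degree $n$, that $t_n(\sfE,w)>0$, that the extremal points supplied by the lemma are distinct, and that a polynomial of degree at most $n-1$ with $n+1$ zeros is identically zero — so I expect no further obstacles.
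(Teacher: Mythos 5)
Your proposal is correct and follows essentially the same route as the paper's own proof: average the two putative minimizers, invoke Lemma~\ref{lem:nbr_extremal_points} to produce $n+1$ extremal points, use the equality case of the triangle inequality to force agreement there, and conclude via the degree count. Your explicit observation that $t_n(\sfE,w)>0$ (so that $w(z_j)>0$ at the extremal points) is a small detail the paper leaves implicit, but the argument is the same.
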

This is the so-called weighted Chebyshev polynomial of degree $n$ with respect to the set $\sfE$ and the weight function $w$.

\begin{proof}
	Assume without loss of generality that $w$ is upper semicontinuous. The existence of a minimizer has already been established. To prove the uniqueness of the minimizer we assume that there are two monic polynomials of degree $n$, denoted $T^{(1)}$ and $T^{(2)}$,  satisfying
	\[
		\|wT^{(1)}\|_{\sfE} = t_n(\sfE,w) = \|wT^{(2)}\|_{\sfE}.
	\]
	Their average is denoted $T = \frac{1}{2}(T^{(1)}+T^{(2)})$. By the triangle inequality 
	\[\|wT\|_{\sfE}\leq \frac{1}{2}\|wT^{(1)}\|_{\sfE}+\frac{1}{2}\|wT^{(2)}\|_{\sfE}= t_n(\sfE,w).\]
	On the other hand, since $T$ is a monic polynomial of degree $n$, the reverse inequality \[\|wT\|_{\sfE}\geq t_n(\sfE,w)\] follows by definition of $t_n$. It turns out that the average polynomial $T$ is actually a minimizer. Lemma \ref{lem:nbr_extremal_points} implies the existence of $n+1$ distinct points $z_1,\dotsc,z_{n+1}$ such that
	\[w(z_j)|T(z_j)| = t_n(\sfE,w).\]
	Since
	\begin{align*}
		w(z_j)|T(z_j)| &= w(z_j)\frac{|T^{(1)}(z_j)+T^{(2)}(z_j)|}{2}\\&\leq \frac{1}{2}w(z_j)|T^{(1)}(z_j)|+\frac{1}{2}w(z_j)|T^{(2)}(z_j)| \leq t_n(\sfE,w)
	\end{align*}
	equality holds throughout. But this is only possible if $\arg T^{(1)}(z_j) = \arg T^{(2)}(z_j)$ and 
	\[|T^{(1)}(z_j)| = t_n(\sfE,w) = |T^{(2)}(z_j)|\]
	for all $j = 1,\dotsc,n+1$.
	
	As a consequence, the difference $T^{(1)}-T^{(2)}$ is a polynomial of degree at most $n-1$ that vanishes at the $n+1$ points $z_1,\dotsc,z_{n+1}$. This can occur if and only if the difference is constantly equal to $0$. Therefore $T^{(1)} = T^{(2)}$ and we have established the existence of a unique minimizer, henceforth denoted $T_n^{\sfE,w}$.
\end{proof}

If the risk of confusing the reader is low and the set $\sfE$ associated with the weight function~$w$ is clearly implied from the weight function, we will simply use the notation $T_n^w$ for the corresponding weighted Chebyshev polynomial. Alternatively, in the case where the weight is given by $w = 1$ we will use the notation $T_n^{\sfE}$. The notation $\D$ will be used for the open unit disk and $\bbT$ for the unit circle. We will further use $\RS = \C\cup\{\infty\}$ to denote the Riemann sphere. Chebyshev polynomials corresponding to compact sets in the complex plane are only known explicitly for a very narrow class of sets and weights. To provide at least one example, we show that
\begin{equation}
	T_n^{\bbT}(z) = z^n.
	\label{eq:cheb_unit_cirlce}
\end{equation}
Let \(P(z)=z^n+a_{n-1}z^{n-1}+\cdots+a_0\) be an arbitrary monic polynomial. Then
\[
\|P\|_{\bbT}
= \max_{|z|=1}\left|z^n\bigl(1+a_{n-1}z^{-1}+\cdots+a_0z^{-n}\bigr)\right|
= \max_{|z|=1}\left|1+a_{n-1}z+\cdots+a_0z^{n}\right|.
\]
Since the latter is the maximum modulus of an analytic function on the unit disk whose value at
\(z=0\) equals \(1\), the maximum modulus principle implies that
\[
\|P\|_{\bbT} \ge 1.
\]
As the polynomial \(z^n\) attains this lower bound, Theorem~\ref{thm:chebyshev_existence_uniqueness}
shows that \(T_n^{\bbT}(z)=z^n\).

This example also demonstrates that the set of extremal points of a Chebyshev polynomial may be infinite.
In contrast, this cannot occur in the unweighted setting when \(\sfE\subset\R\), as we shall show in
Theorem~\ref{thm:alternation_theorem} below. To this end, we will pursue an approach based on a dual
characterization of Chebyshev polynomials.



\subsection{Orthogonality and alternation}
Let \(\mu\) be a positive finite measure supported on a compact set
containing at least \(n+1\) points. Then the corresponding monic orthogonal polynomial of degree \(n\)
is the extremal polynomial that minimizes
\[
\int \Bigl|z^n+\sum_{k=0}^{n-1} a_k z^k\Bigr|^2\,\mathrm{d}\mu(z)
\]
among all choices of coefficients \(a_0,\dots,a_{n-1}\).
Equivalently, it is the monic polynomial of degree \(n\) with minimal \(L^2(\mathrm{d}\mu)\)-norm. The Chebyshev polynomials of the first kind defined in~\eqref{eq:chebyshev_first_kind_trigonometry_formula} are orthogonal with respect to \(d\mu(x) = \frac{1}{\sqrt{1-x^2}}dx\), see e.g. \cite{bernstein30}. An extension of this orthogonality relation exists for Chebyshev polynomials on unions of intervals as is shown in~\cite[\S 3]{sodin-yuditskii93}.
Since our discussion of orthogonal polynomials will be brief, we refer the reader to \cite{van-assche97} for a comprehensive introduction to the subject. The title of the present text is inspired by that work.

We proceed by showing that Chebyshev polynomials in the complex plane always admit a natural orthogonality interpretation.  Let \(\cM(\sfE)\) denote the collection of all probability measures supported on \(\sfE\).
The following result relates Chebyshev polynomials to the theory of orthogonal polynomials.

\begin{theorem}
	Let \(\sfE\subset \C\) be a compact set and \(w:\sfE\rightarrow [0,\infty)\) an upper semicontinuous weight function supported on at least \(n+1\) distinct points. Then
	\begin{equation}
	\max_{\mu\in \cM(\sfE)}\inf_{a_k}\int\Bigl|z^n+\sum_{k=0}^{n-1}a_kz^{k}\Bigr|^2w(z)^2d\mu(z) = t_n(\sfE,w)^2.
	\label{eq:OPM}	
	\end{equation}
	Furthermore, \(T_n^{\sfE,w}\) is the monic orthogonal polynomial w.r.t \(w(z)^2d\mu^\ast\)  for any extremal measure \(\mu^\ast\). Such a measure can be chosen to be supported on \(n+1\) points if \(\sfE\subset \R\) and at most \(2n+1\) points otherwise.
	\label{thm:OPM}
\end{theorem}
Recently, the study of \eqref{eq:OPM} has garnered attention, see e.g. \cite{bos-levenberg-ortega-cerda21, buchecker-eichinger-zinchenko25, charpentier-levenberg-wielonsky25}. In these articles, extremal measures corresponding to \eqref{eq:OPM} are called \textit{optimal prediction measures}. We will demonstrate that this concept is actually equivalent to the theory of \textit{extremal signatures} from~\cite{rivlin-shapiro61}.

First of all, note that if \(\mu\in \cM(\sfE)\), then
\[\inf_{a_k}\int\Bigl|z^n+\sum_{k=0}^{n-1}a_kz^{k}\Bigr|^2w(z)^2d\mu(z)\leq \int \bigl|T_n^{\sfE,w}(z)\bigr|^2w(z)^2d\mu(z)\leq t_n(\sfE,w)^2.\]
Consequently, in order to prove Theorem \ref{thm:OPM} we only need to demonstrate that there exists a measure for which equality holds in \eqref{eq:OPM}. This will be a consequence of an adaptation of \cite[Theorem 1]{rivlin-shapiro61}, see also \cite[Theorem 2.5]{rivlin90} and \cite[Theorem 8]{novello-schiefermayr-zinchenko21}. We will use that an upper semicontinuous function \(f:\sfE\rightarrow \C\) has a compact set of extremal points
\[\operatorname{Ext}(f,\sfE) \coloneqq \{z\in \sfE: |f(z)| = \|f\|_\sfE\}.\] 

\begin{theorem}[Rivlin \& Shapiro \cite{rivlin-shapiro61}]
	Let \(\sfE\subset \C\) be compact and \(w:\sfE\rightarrow [0,\infty)\) be upper semicontinuous supported on at least distinct \(n+1\) points. A monic polynomial \(P\) of degree~\(n\) is equal to \(T_n^{\sfE,w}\) if and only if there are \(m\geq n+1\) distinct points \(z_1,\dotsc,z_m\in \operatorname{Ext}(wP,\sfE)\) and positive numbers \(\lambda_1\dotsc,\lambda_m\) such that
 	\begin{equation}
 		\sum_{j=1}^{m}\lambda_j\overline{P(z_j)}z_j^k = 0,\quad k=0,\,1, \dotsc,n-1.\label{eq:rivlin_shapiro_orthogonality}
 	\end{equation}
 	If \(\sfE\subset \R\) then we can choose \(m= n+1\). Otherwise, we may impose \(m\leq 2n+1\).
 	\label{thm:rivlin_shapiro}
\end{theorem}
Before proving Theorem \ref{thm:rivlin_shapiro} let us illustrate how Theorem~\ref{thm:OPM} follows from Theorem~\ref{thm:rivlin_shapiro}. For this reason, assume that \(z_1,\dotsc,z_m\) and \(\lambda_1,\dotsc,\lambda_m\) have the properties specified in Theorem~\ref{thm:rivlin_shapiro}. Note that \(w(z_j)\neq 0\) for \(j=1,\dotsc,m\). Define
\[\alpha_j = \frac{\lambda_j}{w(z_j)^2}\Bigg/\sum_{l=1}^{m}\frac{\lambda_l}{w(z_l)^2},\]
and
\[\mu = \sum_{j=1}^{m}\alpha_j\delta_{z_j}.\]
Clearly \(\mu\) is a probability measure. Furthermore, by \eqref{eq:rivlin_shapiro_orthogonality} we gather that if \(0\leq k \leq n-1\), then
\[\int_{\sfE}\overline{T_n^{\sfE,w}(z)}z^kw(z)^2d\mu(z) = \sum_{j=1}^{m}\lambda_j\overline{T_n^{\sfE,w}(z_j)}z_j^k\Bigg/\sum_{l=1}^{m}\frac{\lambda_l}{w(z_l)^2} = 0.\]
Therefore, \(T_n^{\sfE,w}\) is a monic orthogonal polynomial with respect to \(w(z)^2d\mu\) and as a consequence
\[\int |T_n^{\sfE,w}(z)|^2w(z)^2d\mu(z) = \inf_{a_k}\int\Bigl|z^n+\sum_{k=0}^{n-1}a_kz^{k}\Bigr|^2w(z)^2d\mu(z).\] 
Finally, since \(z_j\in \operatorname{Ext}(wT_n^{\sfE,w},\sfE)\) we gather that \(|T_n^{\sfE,w}(z_j)|w(z_j) = t_n(\sfE,w)\) which implies that
\[\int |T_n^{\sfE,w}(z)|^2w(z)^2d\mu(z) = \sum_{j=1}^{m}\alpha_j|T_n^{\sfE,w}(z_j)|^2w(z_j)^2 = \sum_{j=1}^{m}\alpha_j t_n(\sfE,w)^2 = t_n(\sfE,w)^2.	\]

We will also require the following lemma in the proof of Theorem \ref{thm:rivlin_shapiro}.

\begin{lemma}
	Let $\sfE\subset \R$ be compact and $w:\sfE\rightarrow [0,\infty)$ be a bounded function which is non-zero for at least $n+1$ points. The coefficients of $T_n^{\sfE,w}$ are real.
	\label{lem:cheb_on_real}
\end{lemma}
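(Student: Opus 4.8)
The plan is to deduce this directly from the uniqueness statement in Theorem \ref{thm:chebyshev_existence_uniqueness}. Write $T = T_n^{\sfE,w}(z) = z^n + a_{n-1}^\ast z^{n-1} + \cdots + a_0^\ast$ and introduce the conjugate-reflected polynomial $T^\sharp(z) := \overline{T(\bar z)}$. Expanding, $T^\sharp(z) = z^n + \overline{a_{n-1}^\ast}\,z^{n-1} + \cdots + \overline{a_0^\ast}$, so $T^\sharp$ is again a monic polynomial of degree $n$, and the coefficients of $T$ are real precisely when $T^\sharp = T$.

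Next I would check that $T^\sharp$ attains the same weighted norm. Since $\sfE \subset \R$, every $x \in \sfE$ satisfies $\bar x = x$, hence $T^\sharp(x) = \overline{T(x)}$ and $|T^\sharp(x)| = |T(x)|$. Because $w$ is real-valued and nonnegative, this yields $w(x)\,|T^\sharp(x)| = w(x)\,|T(x)|$ for every $x \in \sfE$, so that $\|wT^\sharp\|_{\sfE} = \|wT\|_{\sfE} = t_n(\sfE,w)$. Thus $T^\sharp$ is also a monic minimizer of the weighted supremum norm on $\sfE$. Invoking the uniqueness part of Theorem \ref{thm:chebyshev_existence_uniqueness}, we conclude $T^\sharp = T$, i.e. $\overline{a_j^\ast} = a_j^\ast$ for each $j = 0,\dotsc,n-1$, which is the claim.

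I do not expect a genuine obstacle here, since the argument is immediate once existence and uniqueness are in hand; the only point deserving a word of care is that the hypothesis $\sfE \subset \R$ is exactly what makes complex conjugation an isometry for the weighted supremum norm on $\sfE$ — for a non-real set the reflected polynomial would instead be extremal on $\overline{\sfE}$, and the conclusion would genuinely fail (compare $T_n^{\bbT}(z) = z^n$). If one prefers to bypass a direct appeal to uniqueness, an alternative is to note that the average $\tfrac{1}{2}\bigl(T + T^\sharp\bigr)$ has real coefficients, is monic of degree $n$, and by the convexity reasoning used in the proof of Theorem \ref{thm:chebyshev_existence_uniqueness} is itself a minimizer, hence equals $T_n^{\sfE,w}$; but the route above is the shortest.
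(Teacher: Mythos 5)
Your proof is correct and rests on the same mechanism as the paper's: the uniqueness statement of Theorem \ref{thm:chebyshev_existence_uniqueness} combined with the invariance of the weighted norm under complex conjugation on a real set. The paper phrases it via the real part $\realpart(T)$ (which on $\R$ is exactly your averaged polynomial $\tfrac{1}{2}(T+T^\sharp)$ and requires the inequality $|\realpart(wT)|\leq |wT|$), whereas your primary route with $T^\sharp(z)=\overline{T(\bar z)}$ gets equality of norms directly — a cosmetic difference only.
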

\begin{proof}
	The inequality
	\[|w(x)\Re(T(x))| = |\Re(w(x)T(x))|\leq |w(x)T(x)|\]
	holds for any polynomial $T$ and $x\in \sfE$. Since $\Re(T(x))$ is a polynomial with real coefficients, the result follows from the uniqueness of the minimizer guaranteed by Theorem~\ref{thm:chebyshev_existence_uniqueness}.
\end{proof}

\begin{proof}[Proof of Theorem \ref{thm:rivlin_shapiro}]
	We follow the approach in \cite{rivlin90}. Let 
	\[K \coloneqq \left\{\left(\overline{T_n^{\sfE,w}(z)},\overline{T_n^{\sfE,w}(z)}z,\dotsc, \overline{T_n^{\sfE,w}(z)}z^{n-1}):z\in \operatorname{Ext}(wT_n^{\sfE,w},\sfE\right)\right\}.\]
	To begin let us show that \(0\in \operatorname{cvh}(K)\) - where \(\operatorname{cvh}\) denotes the convex hull. If that is not the case, then the separating hyperplane theorem, see e.g. \cite{boyd-vandenberghe04}, implies that we can find~\(c_0,\dotsc,c_{n}\in \C\) such that
	\[\Re\left(c_{0}+c_1\zeta_1+\cdots +c_{n}\zeta_n\right)\geq 0,\quad (\zeta_1,\dotsc,\zeta_{n})\in K,\]
	while
	\[-\tau \coloneqq \Re\left(c_{0}+c_1\cdot 0+\cdots +c_{n}\cdot 0\right) = \Re c_0<0.\]
	Let us define
	\[P(z) = \sum_{k=0}^{n-1}c_{k+1}z^k\]
	so that \(P\) is a polynomial of degree at most \(n-1\). It is clear that if \(z\in \operatorname{Ext}(wT_n^{\sfE,w},\sfE)\), then
	\[\Re\left(\overline{T_n^{\sfE,w}(z)}P(z)\right) = \Re\left(\sum_{k=0}^{n-1}c_{k+1}\overline{T_n^{\sfE,w}(z)}z^k\right)\geq \tau.\]
	Note that \(T_n^{\sfE,w}-\epsilon P\) is a monic polynomial of degree \(n\) for every \(\epsilon>0\). We will now show that by choosing \(\epsilon>0\) sufficiently small we can ensure that \(\|w(T_n^{\sfE,w}-\epsilon P)\|_{\sfE}<t_n(\sfE,w)\) thus deriving a contradiction since \(t_n(\sfE,w)\) is the minimal quantity.
	
	For \(z\in \operatorname{Ext}(wT_n^{\sfE,w},\sfE)\) we have that \(w(z) = t_n(\sfE,w)/|T_n^{\sfE,w}(z)|\) which shows that the restriction of \(w\) to the compact set \(\operatorname{Ext}(wT_n^{\sfE,w},\sfE)\) is continuous, see also \cite[Lemma 2]{novello-schiefermayr-zinchenko21}. Consequently, since \(\operatorname{Ext}(wT_n^{\sfE,w},\sfE)\) is compact, there exists some lower bound\[w(z)\geq1/M>0\] for \(z\in \operatorname{Ext}(wT_n^{\sfE,w},\sfE)\) and therefore
	\begin{align*}
		w(z)^2\left(\left|T_n^{\sfE,w}(z)-\epsilon P(z)\right|\right)^2 & = w(z)^2\left(|T_n^{\sfE,w}(z)|^2-2\epsilon \Re\left(\overline{T_n^{\sfE,w}(z)}P(z)\right)+\epsilon^2|P(z)|^2\right)\\
		&\leq t_n(\sfE,w)^2-2\epsilon \tau /M+\epsilon^2 \|wP\|_{\sfE}^2.
	\end{align*} 
	By choosing \(\epsilon>0\) sufficiently small we can achieve the inequality
	\[\|w(T_n^{\sfE,w}-\epsilon P)\|_{\operatorname{Ext}(wT_n^{\sfE,w},\sfE)}<t_n(\sfE,w),\]
	and by upper semi-continuity this may be extended to an open neighborhood \(U\) of~\(\operatorname{Ext}(wT_n^{\sfE,w},\sfE)\) with respect to \(\sfE\).
	Note that \(B = \sfE\setminus U\) is compact and since upper semicontinuous functions attain maximum values on compacts we gather that there exists some \(0<\rho<1\) such that
	\[|w(z)T_n^{\sfE,w}(z)|\leq (1-\rho)t_n(\sfE,w),\quad z\in B.\]
	As a consequence, if \(z\in B\), then
	\begin{align*}
		w(z)\left|T_n^{\sfE,w}(z)-\epsilon P(z)\right| & \leq (1-\rho)t_n(\sfE,w)+\epsilon \|wP\|_{\sfE}.
	\end{align*} 
	Here, it is also clear that by choosing \(\epsilon>0\) sufficiently small, we can achieve
	\[\|w(T_n^{\sfE,w}-\epsilon P)\|_{B}<t_n(\sfE,w).\]
	This shows that under our current assumption, we can choose \(\epsilon>0\) small enough so that
	\[\|w(T_n^{\sfE,w}-\epsilon P)\|_{\sfE}<t_n(\sfE,w)\]
	but this is a contradiction. As a consequence, \(0\in \operatorname{cvh}(K)\) must hold.
	
	Since \(0\in \operatorname{cvh}(K)\) and since \(K\) can be identified with a subset of \(\R^{2n}\) we find by Carath\'{e}odory's theorem, see e.g. \cite[Theorem 1.1.4]{schneider13}, that there are \(m\leq 2n+1\) positive numbers \(\lambda_1,\dotsc,\lambda_m\) summing to \(1\) and associated points \(z_1,\dotsc,z_m\in \operatorname{Ext}(wT_n^{\sfE,w},\sfE)\) such that
	\begin{equation}
		\sum_{k=1}^{m}\lambda_k\overline{T_n^{\sfE,w}(z)}z_k^j = 0,\quad j =0,1,\dotsc,m.
		\label{eq:caratheodory_sum_chebyshev}
	\end{equation}
	We need to establish that \(m \geq n+1\). If it was the case that \(m\leq n\), then
	\[Q(z) = \sum_{j=1}^{m}T_n^{\sfE,w}(z_j)\prod_{k=1\\k\neq j}^{m}\frac{z-z_k}{z_j-z_k}\] 
	is of degree at most \(n-1\) and by \eqref{eq:caratheodory_sum_chebyshev}
	\[0 = \sum_{k=1}^{m}\lambda_k\overline{T_n^{\sfE,w}(z_k)}Q(z_k) = \sum_{k=1}^{m}\lambda_k |T_n^{\sfE,w}(z)|^2>0,\]
	which clearly is impossible. We may therefore conclude that \(n+1\leq m \leq 2n+1\). 
	
	If \(\sfE\subset \R\) then \(K\) is a subset of \(\R^n\) by Lemma \ref{lem:cheb_on_real}. In this case, the application of Carath\'{e}odory's theorem means that we may choose \(m =  n+1\).
	
	Conversely, assume that \(P\) is a monic polynomial of degree \(n\) such that \eqref{eq:rivlin_shapiro_orthogonality} holds. By defining
	\[\alpha_j = \frac{\lambda_j}{w(z_j)^2}\Bigg/\sum_{l=1}^{m}\frac{\lambda_l}{w(z_l)^2},\]
	and
	\[\mu = \sum_{j=1}^{m}\alpha_j\delta_{z_j}.\]
	we find that
	\[t_n(\sfE,w)^2\leq \|wP\|_{\sfE}^2\leq \int |P(z)|^2w(z)^2d\mu(z)\leq \int |T_n^{\sfE,w}(z)|^2w(z)^2d\mu(z)\leq t_n(\sfE,w)^2.\]
	But this shows that \[\|wP\|_{\sfE} = t_n(\sfE,w)\] and consequently \(P = T_n^{\sfE,w}\) by Theorem \ref{thm:chebyshev_existence_uniqueness}.
\end{proof}

This interpretation of Chebyshev polynomials as \(L^2\)-extremals has many nice qualities and we will use it to prove the classical alternation theorem. As we already saw, without proof, the Chebyshev polynomials on an interval have the representation
\[T_n^{[-1,1]}(x) = 2^{1-n}\cos(n\theta)\]
where $x = \cos\theta$. As such if
\[x_{n-j} = \cos\left(\frac{\pi j}{n}\right)\]
for $j = 0,\ldots,n$ then $-1 = x_0<x_1<\cdots <x_n = 1$ and
\[T_{n}^{[-1,1]}(x_j) = 2^{1-n}(-1)^{n-j}.\]
The value of the Chebyshev polynomial $T_n^{[-1,1]}$ alternates between $\pm2^{1-n}$ at $n+1$ consecutive points. This property is in fact characterizing for best approximations in the real setting, however, this realization took a long time to develop. Although Chebyshev described this phenomenon the first person to fully study this characterization was Kirchberger \cite{kirchberger02}, in 1902. The first complete proof was published by Borel in \cite{borel05}. Achieser states in \cite[p. 7]{achieser87} that Markov gave a proof in a series of lectures around 1905 that first appeared in print in 1948 \cite{markov48}. We will consider the general form as expressed by Novello et al. in \cite{novello-schiefermayr-zinchenko21}, but our proof will be based on Theorem \ref{thm:rivlin_shapiro}.

\begin{theorem}[Borel (1905) \cite{borel05}, Markov \cite{markov48}]
	Let \(\sfE\subset\R\) be compact, and let \(w:\sfE\to[0,\infty)\) be an upper semicontinuous function
	that is positive on \(n+1\) distinct points of \(\sfE\).
	A monic polynomial \(T\) of degree \(n\) coincides with the Chebyshev polynomial \(T_n^{\sfE,w}\)
	if and only if there exist \(n+1\) points
	\begin{equation}
		x_1<x_2<\cdots<x_{n+1}\qquad \text{in }\sfE
		\label{eq:alternation_set}	
	\end{equation}
	such that
	\begin{equation}
		w(x_j)\,T(x_j)=(-1)^{\,n+1-j}\,\|wT\|_{\sfE},
		\qquad j=1,\dots,n+1.
		\label{eq:alternation_condition}
	\end{equation}
	\label{thm:alternation_theorem}
\end{theorem}

\begin{proof}
	We begin by showing that there exist points \(x_1<\cdots<x_{n+1}\) in \(\sfE\) such that~\(T_n^{\sfE,w}\) satisfies \eqref{eq:alternation_condition}.
	By referring to Theorem \ref{thm:rivlin_shapiro} we know that there are positive numbers~\(\lambda_1,\dotsc,\lambda_{n+1}\) and points \(x_1,\dotsc,x_{n+1}\) in \(\operatorname{Ext}(wT_n^{\sfE,w},\sfE)\) such that
	\[\sum_{j=1}^{n+1}\lambda_j T_n^{\sfE,w}(x_j)P(x_j) = 0\] 
	for any polynomial \(P\) of degree \(n-1\).
	Assume that there is some index \(k\) such that
	\[\sgn T_n^{\sfE,w}(x_k) = \sgn T_n^{\sfE,w}(x_{k+1}).\]
	By defining,
	\[Q(x) = \prod_{j<k}(x-x_j)\prod_{j>k+1}(x-x_j),\]
	we find that \(Q\) is a polynomial of degree at most \(n-1\) and \(\sgn Q(x_k) = \sgn Q(x_{k+1})\). From~\eqref{eq:rivlin_shapiro_orthogonality} we gather that
	\[0 = \sum_{j=1}^{n+1}\lambda_jT_n^{\sfE,w}(x_j)Q(x_j) = \lambda_k T_n^{\sfE,w}(x_k)Q(x_k)+\lambda_{k+1} T_n^{\sfE,w}(x_{k+1})Q(x_{k+1}),\]
	however, this last term can not be \(0\) since both summands have the same sign and neither is \(0\).
	
	Conversely, if \(T\) is a monic polynomial satisfying \eqref{eq:alternation_condition}, then if \(\|wT\|_{\sfE}>t_n(\sfE,w)\) we find that
	\[\sgn w(x_j)(T(x_j)-T_n^{\sfE,w}(x_j)) = (-1)^{n+1-j},\quad j=1,\dotsc,n+1.\]
	But this is a contradiction since \(T-T_n^{\sfE,w}\) is a polynomial of degree at most \(n-1\) and it has~\(n+1\) sign changes, and consequently, at least \(n\) zeros. We conclude that \(\|wT\|_{\sfE} = t_n(\sfE,w)\) and consequently \(T = T_n^{\sfE,w}\).
\end{proof}

It readily follows from Theorem \ref{thm:alternation_theorem} that if $x = \cos\theta$ then
\[T_n^{[-1,1]}(x) = 2^{1-n}\cos n\theta.\]
Indeed, using Euler's formula as well as the notation $\lfloor x \rfloor$ to mean the largest integer less than $x\in \R$ we obtain
\begin{align*}
	\cos n \theta &= \Re \Big[\cos \theta+i\sin \theta\Big]^n = \Re \sum_{j=0}^{n}{{n}\choose{j}}i^{j}\cos(\theta)^{n-j}\sin(\theta)^j \\
	&=\sum_{j=0}^{\lfloor n/2\rfloor}{{n}\choose{2j}}(-1)^j\cos(\theta)^{n-2j}\sin(\theta)^{2j}=\sum_{j=0}^{\lfloor n/2\rfloor}{{n}\choose{2j}}(-1)^j\cos(\theta)^{n-2j}(1-\cos^2\theta)^j.
\end{align*}
We see that $\cos n\theta = P(\cos\theta)$ where $P$ is an algebraic polynomial of degree $n$. The leading coefficient of $P$ is given by
\[\sum_{j=0}^{\lfloor n/2\rfloor}{{n}\choose{2j}} = 2^{n-1}\]
and consequently $2^{1-n}P(x)$ is a monic polynomial of degree \(n\) with an alternating set consisting of $n+1$ points. Consequently, Theorem \ref{thm:alternation_theorem} implies that $T_n^{[-1,1]}(x) = 2^{1-n}P(x).$

In order to provide further related and explicit examples we introduce the family of Jacobi weights for $\alpha\geq 0$ and $\beta \geq 0$
\begin{equation}
	w^{(\alpha,\beta)}(x) = (1-x)^\alpha(1+x)^\beta, \quad x\in [-1,1].
	\label{eq:jacobi_weight}
\end{equation}
Using the transformation $x = \cos\theta$ together with Theorem \ref{thm:alternation_theorem} it is possible to conclude, in a similar way as above, that
\begin{align*}
	T_{n}^{w^{(1/2,1/2)}}(x) & = 2^{-n}\frac{\sin (n+1)\theta}{\sin \theta}, \\
	T_{n}^{w^{(0,1/2)}}(x) & = 2^{-n}\frac{\cos (n+\tfrac{1}{2})\theta}{\cos \tfrac{1}{2}\theta}, \\
	T_n^{w^{(1/2,0)}}(x) & = 2^{-n}\frac{\sin(n+\tfrac{1}{2})\theta}{\sin \tfrac{1}{2}\theta},
\end{align*}
for $x\in [-1,1]$, see e.g. \cite{mason93}. These are the normalized Chebyshev polynomials of the second, third, and fourth kind. In \cite{christiansen-rubin25}, the Chebyshev polynomials corresponding to Jacobi weights are studied, in particular monotonicity properties of $n\mapsto 2^nt_n([-1,1],w^{(\alpha,\beta)})$. This complements work in \cite{bernstein31}.

These examples illustrate that Theorem \ref{thm:alternation_theorem}, also called the ``Alternation theorem'', can be used to determine Chebyshev polynomials of real sets explicitly, something we will see further examples of shortly. Its use can further provide insight into the asymptotic behavior of Chebyshev polynomials corresponding to real sets, see e.g. \cite{bernstein30,bernstein31,christiansen-simon-zinchenko-I}. For a general account of alternation properties on the real line we refer the reader to \cite{sodin-yuditskii93}. The fact that the Alternation theorem does not extend in the complex setting is one reason for the fact that Chebyshev polynomials corresponding to complex sets are less understood than their real counterparts. 

\subsection{Weighted Chebyshev polynomials on an interval}
\label{subs:markov_bernstein}
We begin this section by considering Markov's generalization of Problem \ref{prob:weighted_chebyshev_approximation}, in the case where $f(x) = x^n$. We will detail the solution he provided, as discussed in \cite{markov84}. Our goal is to ultimately derive a general result on weighted Chebyshev polynomials, as established by Bernstein in \cite{bernstein31}. To achieve this, we will provide a detailed account of the result by Markov. This approach allows us to bypass the analysis of the asymptotic formulas for orthogonal polynomials discussed by Bernstein in \cite{bernstein30}. Consequently, our proof is shorter than Bernstein's, though it does not address the asymptotically alternating properties of orthogonal polynomials associated with weights on $[-1,1]$. 
 
    Let $a_k\in\overline{\C}\setminus [-1,1]$ and form the polynomial
    \[P = \prod_{k=1}^{2m}\left(1-\frac{x}{a_k}\right)\]
    which we impose to satisfy \(P(x)>0\) for \(x\in [-1,1]\). The associated weight function~$w:[-1,1]\rightarrow (0,\infty)$ is defined by
	\begin{equation}
          w(x) = 1\Big /\sqrt{P(x)}. 
          \label{eq:weight_markov}
	\end{equation}
    The case of an odd number of factors can be handled by taking $a_{2m} = \infty$ and $\vert a_k \vert<\infty$ for~$k=1, 2, \ldots, 2m-1$. 
    Let $z\in\bbT$ and $\rho_k\in\D$ be defined implicitly through the equations
    \begin{equation}
    	       x = \frac{1}{2}\left(z+\frac{1}{z}\right) \; \mbox{ and } \; 
       a_k  = \frac{1}{2}\left(\rho_k+\frac{1}{\rho_k}\right) \; \mbox{ for } \; k=1, 2, \ldots, 2m.
       \label{eq:parameters_markov}
    \end{equation}
The following result is due to Markov in \cite{markov84}. However, the explicit representation for~$wT_n^{w}$ that we use can be found in \cite[Appendix A]{achieser56} where the proof is left to the reader. We include a proof here.

    \begin{theorem}[Markov (1884) \cite{markov84}]
        \label{thm:markov_weighted_chebyshev}
        Let $w:[-1,1]\rightarrow (0,\infty)$ and $\{a_k:k = 1,\dotsc,2m\}$ be as in \eqref{eq:weight_markov} and let $\{\rho_k:k=1,\dotsc,2m\}$ be defined through \eqref{eq:parameters_markov}. For positive integers~$n>m$, we have for $x = \frac{1}{2}(z+\frac{1}{z})$ the identity 
        \begin{equation}
        \label{eq:markov_explicit_chebyshev}
           w(x)T_n^{w}(x) = 2^{-n}\prod_{k=1}^{2m}\sqrt{1+\rho_k^2}
           \left(z^{m-n} \prod_{k=1}^{2m}\sqrt{\frac{1-z\rho_k}{z-\rho_k}}+
           z^{n-m}\prod_{k=1}^{2m}\sqrt{\frac{z-\rho_k}{1-z\rho_k}}\,\right)
        \end{equation}
        and
        \begin{equation}
            t_n([-1,1],w) = 2^{1-n}\exp\left(\frac{1}{\pi}\int_{-1}^{1}\frac{\log w(x)}{\sqrt{1-x^2}}dx\right).
            \label{eq:achieser_norm}
        \end{equation}

    \end{theorem}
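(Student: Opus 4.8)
The plan is to pass to the Joukowski variable $x=\tfrac12(z+z^{-1})$, to show that the right‑hand side of \eqref{eq:markov_explicit_chebyshev} divided by $w$ is an honest monic polynomial of degree $n$, to verify that this polynomial times $w$ equioscillates at exactly $n+1$ points of $[-1,1]$, and then to quote the Alternation Theorem (Theorem~\ref{thm:alternation_theorem}). The starting point is the elementary identity, valid for each $k$,
\[
1-\frac{x}{a_k}=\frac{(z-\rho_k)(1-\rho_k z)}{z\,(1+\rho_k^{2})},
\]
obtained by clearing denominators in \eqref{eq:parameters_markov}. Writing $\psi(z):=\prod_{k=1}^{2m}\frac{z-\rho_k}{1-\rho_k z}$, this shows that on $\bbT$ one has $w(x)=z^{m}\bigl(\prod_k\sqrt{1+\rho_k^{2}}\bigr)\bigl(\prod_k\sqrt{(z-\rho_k)(1-\rho_k z)}\bigr)^{-1}$ for a compatible choice of radicals, and that the requirement $\prod_k(1-x/a_k)>0$ forces $\{a_k\}$, hence $\{\rho_k\}$, to be closed under conjugation — so that $\psi(z)=\prod_k\frac{z-\rho_k}{1-\bar\rho_k z}$ is in fact a genuine finite Blaschke product of degree $2m$.

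Dividing the right‑hand side of \eqref{eq:markov_explicit_chebyshev} by $w(x)$ and simplifying the square roots (for instance $\sqrt{(1-\rho_k z)^{2}/[(z-\rho_k)(1-\rho_k z)]}=\sqrt{(1-\rho_k z)/(z-\rho_k)}$), the candidate collapses to the honest Laurent polynomial
\[
T(x)=2^{-n}\Bigl(z^{-n}\prod_{k=1}^{2m}(1-\rho_k z)+z^{n-2m}\prod_{k=1}^{2m}(z-\rho_k)\Bigr).
\]
One checks directly that $T$ is invariant under $z\mapsto z^{-1}$, hence is a polynomial in $x$, and that, because $n>m$, the coefficient of $z^{n}$ in the bracket equals $1$ and comes solely from the second product; comparing with $z^{n}+z^{-n}=2^{n}x^{n}+\dots$ (recall $\sum_{j}\binom{n}{2j}=2^{n-1}$) shows that $T$ is monic of degree $n$. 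The case of an odd number of finite $a_k$'s is the limit $\rho_{2m}\to0$ and requires no separate argument. Reversing the simplification, $w(x)T(x)$ equals the right‑hand side of \eqref{eq:markov_explicit_chebyshev}; on $\bbT$ this may be rewritten as $w(x)T(x)=2^{1-n}\bigl(\prod_k\sqrt{1+\rho_k^{2}}\bigr)\,\realpart\!\bigl(z^{n-m}\psi(z)^{1/2}\bigr)$, which in particular is real and bounded in absolute value by $M:=2^{1-n}\prod_k\sqrt{1+\rho_k^{2}}$ there.

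The next step is the equioscillation count. Set $g(z):=z^{n-m}\psi(z)^{1/2}$; since $\psi$ has winding number $2m$ around $0$, $g$ is single‑valued on $\bbT$, and $\arg g(e^{i\theta})=(n-m)\theta+\tfrac12\arg\psi(e^{i\theta})$ is strictly increasing because $n>m$ and the boundary argument of a Blaschke product is strictly increasing; its total increase over one loop is $2\pi(n-m)+\tfrac12\cdot2\pi\cdot2m=2\pi n$. Hence $|w(x)T(x)|=M$, i.e. $g=\pm1$, at exactly $2n$ points of $\bbT$; since $g(1),g(-1)\in\{-1,1\}$ and $g(\bar z)=\overline{g(z)}$ (real coefficients), these $2n$ points project under $z\mapsto x$ onto exactly $n+1$ points of $[-1,1]$, at which $w(x)T(x)=M\cos(\arg g)$ takes the values $\pm M$ in alternating order. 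By the Alternation Theorem, $T=T_n^{w}$ and $t_n([-1,1],w)=M=2^{1-n}\prod_k\sqrt{1+\rho_k^{2}}$, which is \eqref{eq:markov_explicit_chebyshev}.

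It remains to identify $M$ with the integral in \eqref{eq:achieser_norm}. Substituting $x=\cos\theta$ turns $\tfrac1\pi\int_{-1}^{1}\frac{\log w(x)}{\sqrt{1-x^{2}}}\,dx$ into $\tfrac1{2\pi}\int_{0}^{2\pi}\log w(\cos\theta)\,d\theta$, and the identity of the first paragraph gives, on $\bbT$ (using $|1-\rho_k z|=|z-\bar\rho_k|$),
\[
\log w(\cos\theta)=-\tfrac12\sum_{k=1}^{2m}\bigl(\log|e^{i\theta}-\rho_k|+\log|e^{i\theta}-\bar\rho_k|-\log|1+\rho_k^{2}|\bigr).
\]
Since $\tfrac1{2\pi}\int_{0}^{2\pi}\log|e^{i\theta}-\rho|\,d\theta=0$ for $|\rho|<1$ (harmonicity of $\log|1-\rho\zeta|$ in the disc), only the constant terms survive, giving $\tfrac1{2\pi}\int_{0}^{2\pi}\log w(\cos\theta)\,d\theta=\tfrac12\sum_k\log|1+\rho_k^{2}|=\log\prod_k\sqrt{1+\rho_k^{2}}$; exponentiating and multiplying by $2^{1-n}$ yields \eqref{eq:achieser_norm}. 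The routine but fiddly part is pinning down the branches of the various square roots so that all identities hold with the correct sign, most conveniently by checking them at $z=1$, where every quantity is positive; the genuinely delicate point is the equioscillation count — recognizing that $\psi$ is an honest Blaschke product so that $\arg g$ is monotone, and handling the endpoints $z=\pm1$ so that the $2n$ boundary extrema of $wT$ descend to precisely the $n+1$ alternation points required by Theorem~\ref{thm:alternation_theorem}. That is where I expect the main work to lie.
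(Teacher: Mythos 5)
Your proposal is correct and follows essentially the same route as the paper's proof: pass to the Joukowski variable, verify that the candidate divided by $w$ is a monic degree-$n$ polynomial in $x$, count the $n+1$ alternation points on $[-1,1]$ via the monotone increase (by $2\pi n$ around $\bbT$) of $\arg\bigl(z^{n-m}\psi(z)^{1/2}\bigr)$, and invoke Theorem \ref{thm:alternation_theorem}. The only (harmless) differences are that you establish polynomiality from the $z\mapsto 1/z$ symmetry of the Laurent expression directly rather than via the paper's Schwarz-reflection argument, and you evaluate the norm integral by the mean-value property of $\log|e^{i\theta}-\rho|$ on the circle instead of citing Lemma \ref{lem:log_integral_equilibrium_measure}; both steps are equivalent to the paper's.
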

    We remark that computation of integrals of the form
    \[\frac{1}{\pi}\int_{-1}^{1}\frac{\log w(x)}{\sqrt{1-x^2}}dx\]
    can be handled using the machinery of potential theory. In particular, we will heavily rely on the following lemma for computations. We postpone the proof until Section \ref{subseq:pot_theory}.
    
    \begin{lemma}
			For any $z\in \C$, we have
			\begin{equation}
			\label{int}
				\frac{1}{\pi}\int_{-1}^{1}\frac{\log|x-z|}{\sqrt{1-x^2}}dx = \log\frac{\left|z+\sqrt{z^2-1}\right|}{2},
			\end{equation}
			where $z+\sqrt{z^2-1}$ maps $\C\setminus[-1,1]$ conformally onto $\C\setminus\overline{\D}$. For $z\in [-1,1]$ the integral is constantly equal to $-\log 2$.
			\label{lem:log_integral_equilibrium_measure}
		\end{lemma}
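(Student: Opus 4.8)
The plan is to evaluate the integral by reducing it to a mean value over the unit circle. First I would substitute $x=\cos\theta$ with $\theta\in[0,\pi]$; since $dx/\sqrt{1-x^2}=-\,d\theta$, the left-hand side of \eqref{int} becomes $\frac1\pi\int_0^{\pi}\log|\cos\theta-z|\,d\theta$. Next I would introduce $w=z+\sqrt{z^2-1}$, taking the branch of the square root for which $|w|\ge 1$; this is precisely the branch named in the statement, namely the inverse of the Joukowski map $w\mapsto\frac12(w+w^{-1})$, which is a conformal bijection of $\{|w|>1\}$ onto $\C\setminus[-1,1]$ carrying $\bbT$ onto $[-1,1]$ (a standard fact I would simply cite). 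With $z=\frac12(w+w^{-1})$ one has the algebraic identity
\[
\cos\theta-z \;=\; -\frac{1}{2w}\,(w-e^{i\theta})(w-e^{-i\theta}),
\]
which follows at once from $(w-e^{i\theta})(w-e^{-i\theta})=w^{2}-2w\cos\theta+1$.

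Taking $\log|\cdot|$ in this identity and integrating over $\theta\in[0,\pi]$, the left-hand side of \eqref{int} becomes
\[
\log\frac{1}{2|w|}\;+\;\frac1\pi\int_0^{\pi}\log|w-e^{i\theta}|\,d\theta\;+\;\frac1\pi\int_0^{\pi}\log|w-e^{-i\theta}|\,d\theta .
\]
The substitution $\theta\mapsto-\theta$ together with $2\pi$-periodicity turns the last integral into $\frac1\pi\int_{\pi}^{2\pi}\log|w-e^{i\theta}|\,d\theta$, so the two integrals combine to $\frac1\pi\int_0^{2\pi}\log|w-e^{i\theta}|\,d\theta = 2\cdot\frac{1}{2\pi}\int_0^{2\pi}\log|w-e^{i\theta}|\,d\theta$. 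The remaining ingredient is the classical evaluation
\[
\frac{1}{2\pi}\int_0^{2\pi}\log|w-e^{i\theta}|\,d\theta \;=\; \log^{+}|w|,
\]
which equals $\log|w|$ since $|w|\ge 1$: for $|w|>1$ this is the mean value property of the harmonic function $\zeta\mapsto\log|w-\zeta|$ on a disk containing $\overline{\D}$, and the boundary case $|w|=1$ follows by continuity (or from the explicit value $\frac{1}{2\pi}\int_0^{2\pi}\log|1-e^{i\theta}|\,d\theta=0$). Putting the pieces together, the left-hand side of \eqref{int} equals $\log\frac{1}{2|w|}+2\log|w|=\log\frac{|w|}{2}=\log\frac{|z+\sqrt{z^2-1}|}{2}$, which is the asserted formula.

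For $z\in[-1,1]$ one has $z^{2}-1\le 0$, hence $\sqrt{z^2-1}=i\sqrt{1-z^2}$ and $|z+\sqrt{z^2-1}|^{2}=z^{2}+(1-z^{2})=1$, so the integral collapses to $-\log 2$; this also confirms finiteness despite the logarithmic singularity of the integrand at $x=z$, which is harmless because $\log$ is integrable against $d\theta$. The only delicate points are the branch bookkeeping — one must work with the determination $|w|\ge 1$ to land on the correct side for the mean value property — and the limiting step needed precisely when $z\in[-1,1]$, where $\log|w-\zeta|$ ceases to be harmonic across the point $\zeta=w\in\bbT$. As an alternative route, the identity can be read off from potential theory: the left-hand side of \eqref{int} is minus the logarithmic potential of the equilibrium measure $d\mu(x)=\frac{dx}{\pi\sqrt{1-x^2}}$ of $[-1,1]$, and \eqref{int} then expresses the standard relation tying this potential to $\capacity([-1,1])=\tfrac12$ and the Green function $g_{\RS\setminus[-1,1]}(\cdot,\infty)=\log|z+\sqrt{z^2-1}|$.
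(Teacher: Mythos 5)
Your proof is correct, but it takes a genuinely different route from the paper. You evaluate the integral directly: the substitution $x=\cos\theta$, the Joukowski factorization $\cos\theta-z=-\tfrac{1}{2w}(w-e^{i\theta})(w-e^{-i\theta})$ with $z=\tfrac12(w+w^{-1})$, $|w|\ge 1$, and the classical mean-value identity $\frac{1}{2\pi}\int_0^{2\pi}\log|w-e^{i\theta}|\,d\theta=\log^{+}|w|$. The paper instead recognizes the left-hand side as $-U^{\mu_{[-1,1]}}(z)$, invokes the identity $G_{\sfE}=\cE(\mu_{\sfE})-U^{\mu_{\sfE}}$ together with the explicit Green's function $G_{[-1,1]}(z)=\log|z+\sqrt{z^2-1}|$ to get the formula off $[-1,1]$, and then extends to all of $\C$ by the general principle that two potentials which agree outside a set of planar Lebesgue measure zero agree everywhere (both sides being appropriately semicontinuous). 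Your computation is self-contained and elementary, needs no potential-theoretic machinery beyond the mean value property of $\zeta\mapsto\log|w-\zeta|$, and handles $z\in[-1,1]$ on the same footing as $z\notin[-1,1]$ once the boundary case $|w|=1$ of the mean-value identity is settled via $\int_0^{2\pi}\log|1-e^{i\theta}|\,d\theta=0$ — the one delicate point, which you correctly flag. It also has the incidental advantage of not forward-referencing Section 3 material, since the lemma is already used in the proof of Markov's theorem. What the paper's argument buys is brevity given the framework it has already built, and a clean illustration of how the general identities \eqref{eq:greens_function_from_potential} and \eqref{eq:equilibrium_measure_interval} are deployed; your alternative closing remark shows you see that connection as well.
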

\begin{proof}[Proof of Theorem \ref{thm:markov_weighted_chebyshev}]
        The proof rests upon showing that the function exhibited in \eqref{eq:markov_explicit_chebyshev} defines the product of a monic polynomial in $x$ of degree $n$ with the weight function $w$. We further show that this function possesses the specified alternating qualities made precise in Theorem~\ref{thm:alternation_theorem}. To begin, we consider a branch of the function
        \[\Psi(z) = \prod_{k=1}^{2m}\sqrt{z-\rho_k}.\]
        The branch can be specified if we let \(\Psi\) be defined on a neighborhood of \(|z|\geq 1\) with~$\Psi(z) = z^m(1+o(1))$ as $z\rightarrow \infty$. We introduce the function
        \begin{align}
            \begin{split}
            \Phi_n(z) & = \frac{1}{2}\left(z^{2m-n}\frac{\Psi(1/z)}{\Psi(z)}+z^{n-2m}\frac{\Psi(z)}{\Psi(1/z)}\right)\Big /w\left(\frac{z+z^{-1}}{2}\right)\\& = \frac{1}{2}\left(z^{m-n}\prod_{k=1}^{2m}\sqrt{\frac{1-z\rho_k}{z-\rho_k}}+z^{n-m}\prod_{k=1}^{2m}\sqrt{\frac{z-\rho_k}{1-z\rho_k}}\right)\Big /w\left(\frac{z+z^{-1}}{2}\right)
            \end{split}
            \label{eq:extremal_func}
        \end{align}    
        and claim that $\Phi_n$ is a polynomial in the variable $x$ related to \(z\) through \eqref{eq:parameters_markov}. To see this note that
        \begin{align*}
            w\left(\frac{z+z^{-1}}{2}\right) = z^{m}\prod_{k=1}^{2m}\left(\frac{(z-\rho_k)(1-\rho_kz)}{1+\rho_k^2}\right)^{-1/2}.
        \end{align*}
        Substituting this expression into \eqref{eq:extremal_func} yields
        \[\Phi_n(z) = \frac{1}{2}\frac{1}{\prod_{k=1}^{2m}\sqrt{1+\rho_k^2}}\left(z^{-n}\prod_{k=1}^{2m}(1-z\rho_k)+z^{n-2m}\prod_{k=1}^{2m}(z-\rho_k)\right).\]
        Represented this way, it is clear that $\Phi_n$ is a rational function in $z$ which is analytic away from $0$ and $\infty$.
        
        From the definition in \eqref{eq:extremal_func} we see that $\Phi_n(z) = \Phi_n(1/z)$ implying that $\Phi_n(x+\sqrt{x^2-1})$ has well-defined real-valued limit values as the complex variable $x$ approaches $[-1,1]$ from either side with respect to the complex plane. Schwarz reflection principle \cite[Theorem IX.1.1]{conway78} implies that $\Phi_n(x+\sqrt{x^2-1})$ is entire.
        
        By letting $z\rightarrow \infty$ it is clear that $\Phi_n(z) / z^n$ has the finite limit 
        \[2^{-1}\Big/\prod_{k=1}^{2m}\sqrt{1+\rho_k^2}\]
        Moving our considerations back to the variable $x$, we find that $\Phi_n(x+\sqrt{x^2-1})$ must be a polynomial of degree $n$ in $x$ with leading coefficient 
        \[2^{n-1}\Big/\prod_{k=1}^{2m}\sqrt{1+\rho_k^2}.\]
        The polynomial 
        \begin{equation}
            2^{-n}\prod_{k=1}^{2m}\sqrt{1+\rho_k^2}\left(z^{m-n}\prod_{k=1}^{2m}\sqrt{\frac{1-z\rho_k}{z-\rho_k}}+z^{n-m}\prod_{k=1}^{2m}\sqrt{\frac{z-\rho_k}{1-z\rho_k}}\right)\Big/ w(x)
            \label{eq:alteranting_poly}
        \end{equation}
        is necessarily monic in the variable $x$, and as we shall see, actually equal to $T_n^{w}$.
        Indeed, the only remaining task is to verify the alternating behavior of \eqref{eq:alteranting_poly} when multiplied with~$w(x)$.
        Note that for $|z|=1$
        \[\left|z^{n-m}\prod_{k=1}^{2m}\sqrt{\frac{z-\rho_k}{1-z\rho_k}}\right|\leq 1\]
        and hence the function defined in \eqref{eq:alteranting_poly} is upper bounded by
        \begin{equation}
            2^{1-n}\prod_{k=1}^{2m}\sqrt{1+\rho_k^2}
            \label{eq:max_val}
        \end{equation}
        whenever $x\in [-1,1]$. Since $w$ is a real function, any $a_k$ that has non-zero imaginary part must appear together with its complex conjugate. This ensures that \eqref{eq:max_val} is positive. Let~$z$ traverse the upper part of the unit circle from $1$ to $-1$. This corresponds to $x$ going from $1$ to $-1$. The maximal value from \eqref{eq:max_val} is attained precisely when
        \[\arg \left(z^{n-m}\prod_{k=1}^{2m}\sqrt{\frac{z-\rho_k}{1-z\rho_k}}\right) = 0\mod \pi.\]
        Let 
        \[f(z) = \left( z^{n-m}\prod_{k=1}^{2m}\sqrt{\frac{z-\rho_k}{1-z\rho_k}}\right)^2 = z^{2n-2m}\prod_{k=1}^{2m}\frac{z-\rho_k}{1-z\rho_k}.\]
        This function is holomorphic in the unit disk and has $2n$ zeros inside. As $z$ traverses the upper half-circle the image $f(z)$ will wrap around the origin $2n$ times, see the discussion following \cite[Theorem V.3.4]{conway78}. But this implies that as $z$ goes from $1$ to $-1$ along the upper half-circle, the value of
        \[\arg \left(z^{n-m}\prod_{k=1}^{2m}\sqrt{\frac{z-\rho_k}{1-z\rho_k}}\right)\]
        goes from $0$ to $n\pi$. Consequently, the function in \eqref{eq:alteranting_poly} has $n+1$ alternating points where it attains the value \eqref{eq:max_val} on $[-1,1]$ when multiplied with $w$. Theorem \ref{thm:alternation_theorem} implies that $T_n^{w}$ and \eqref{eq:alteranting_poly} coincide. Using Lemma \ref{lem:log_integral_equilibrium_measure} we conclude that
        \[\prod_{k=1}^{2m}\sqrt{1+\rho_k^2} = \exp\left(\frac{1}{\pi}\int_{-1}^{1}\frac{\log w(x)}{\sqrt{1-x^2}}dx\right)\]
        and the proof is complete.
    \end{proof}

The solution of Problem~\ref{prob:weighted_chebyshev_approximation} due to Chebyshev~\cite{chebyshev59}
in the special case \(f(x)=x^n\) can be recovered from Theorem~\ref{thm:markov_weighted_chebyshev}
by imposing the relations \(a_{2k}=a_{2k+1}\) for each \(k=1,\dotsc,m\).
The next major advance beyond Markov's results on weighted Chebyshev polynomials on \([-1,1]\)
is due to Bernstein, who between 1930 and 1931 published a two-part series of papers in which he
established precise asymptotic formulas for both orthogonal and Chebyshev polynomials associated
with general weight functions on \([-1,1]\).

This work marked a decisive shift in emphasis. Whereas Chebyshev and Markov had focused primarily
on obtaining explicit formulas for weighted Chebyshev polynomials, Bernstein instead investigated
their asymptotic behavior as the degree tends to infinity.
It should be noted, however, that already in 1919 Faber~\cite{faber20} had extended the notion of
Chebyshev polynomials to the complex plane and carried out a detailed asymptotic analysis for
analytic Jordan domains. We shall not pursue these developments here, as they will be discussed
in depth in Section~\ref{sec:chebyshev_complex}.

Bernstein considered general weights \(w\) on \([-1,1]\) that are bounded away from zero and
Riemann integrable, and showed that the two quantities in \eqref{eq:achieser_norm} remain
asymptotically equivalent as \(n\to\infty\).
Remarkably, his results continue to hold even for weights that vanish on \([-1,1]\).
In general, however, the analysis of Chebyshev polynomials associated with vanishing weights is
considerably more delicate.
Indeed, the classical results of Chebyshev and Markov apply to weights that are reciprocals of
positive polynomials, a class which can only approximate vanishing weights in a very limited sense.
    
\begin{theorem}[Bernstein (1931) \cite{bernstein31}]
			Suppose $\alpha_k\in [0,\infty)$ and $b_k\in [-1,1]$ for $k=0, 1, \ldots, m$.  Consider a weight function $w:[-1,1]\rightarrow [0,\infty)$ of the form
			\begin{equation}  \label{eq:bernstein_weight}
				w(x) = w_0(x)\prod_{k=0}^{m}|x-b_k|^{\alpha_k},
			\end{equation}
			where $w_0$ is Riemann integrable and satisfies $1/M \leq w_0(x) \leq M$ for some constant $M\geq 1$ and all points $x\in [-1,1]$. Then 
			\begin{equation}  \label{eq:bernstein_asymptotics}
				t_n([-1,1],w)\sim
				2^{1-n}\exp\left(\frac{1}{\pi}\int_{-1}^{1}
				\frac{\log w(x)}{\sqrt{1-x^2}}dx\right),\quad n\to \infty.
			\end{equation}
			\label{thm:Bernstein}
		\end{theorem}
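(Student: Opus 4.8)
The plan is to prove the two one-sided estimates
\[
\limsup_{n\to\infty}2^{n-1}t_n([-1,1],w)\le e^{I(w)},\qquad
\liminf_{n\to\infty}2^{n-1}t_n([-1,1],w)\ge e^{I(w)},\qquad
I(w):=\frac1\pi\int_{-1}^{1}\frac{\log w(x)}{\sqrt{1-x^2}}\,dx .
\]
Three soft facts drive everything. First, $t_n$ is monotone in the weight: $w\le v$ forces $t_n([-1,1],w)\le t_n([-1,1],v)$, since $\|wP\|_{[-1,1]}\le\|vP\|_{[-1,1]}$ for every monic $P$. Second, $I$ is additive, $I(w_1w_2)=I(w_1)+I(w_2)$, and by Lemma~\ref{lem:log_integral_equilibrium_measure} one has $I(|x-b|^{\alpha})=-\alpha\log 2$ for every $b\in[-1,1]$; moreover $I(cw)=\log c+I(w)$. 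Third, Theorem~\ref{thm:markov_weighted_chebyshev} supplies the \emph{exact} identity $t_n([-1,1],v)=2^{1-n}e^{I(v)}$ (for $n$ larger than half the degree) whenever $v$ is a constant times the reciprocal of the square root of a polynomial that is positive on $[-1,1]$. The whole proof consists in sandwiching the general $w$ between such ``Markov weights'' after bookkeeping the singular factors $|x-b_k|^{\alpha_k}$; the additive behaviour of $I$ is exactly what makes the powers of $2$ lost in that bookkeeping cancel out.

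For the upper bound I would first absorb the poles. In the infimum defining $t_n([-1,1],w)$, restrict to monic polynomials divisible by $\prod_k(x-b_k)^{\lceil-\alpha_k\rceil}$ (product over $k$ with $\alpha_k<0$); this shows $t_n([-1,1],w)\le t_{n-N}([-1,1],\tilde w)$ with $\tilde w=w_0\prod_k|x-b_k|^{\beta_k}$, all $\beta_k\ge 0$, hence $\tilde w$ bounded, and a short computation using $I(|x-b|^{\alpha})=-\alpha\log2$ shows the factor $2^{N}$ is precisely $e^{I(w)-I(\tilde w)}$. Next, $\tilde w$ is majorized by a continuous weight $v$ with $v\ge$ a positive constant and $I(v)\le I(\tilde w)+\varepsilon$: replace $w_0$ by its upper semicontinuous regularization (same $I$, still a majorant by \eqref{eq:max_absolute_value_upper_sem}), approximate it from above by a continuous positive function off small neighbourhoods of the $b_k$, and note that these neighbourhoods contribute $o(1)$ to $I$ because $\log|x-b_k|$ is integrable against $dx/\sqrt{1-x^2}$. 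Finally, for a continuous $v$ with $1/M'\le v\le M'$, approximate $1/v^2$ from below by a positive polynomial $p$ with $I(p)$ close to $-2I(v)$ (Weierstra{\ss}, then subtract a small constant): then $p^{-1/2}\ge v$ is of the type covered by Theorem~\ref{thm:markov_weighted_chebyshev}, so $2^{n-1}t_n([-1,1],v)\le 2^{n-1}t_n([-1,1],p^{-1/2})=e^{I(p^{-1/2})}$; letting $p\to 1/v^2$ and unwinding the two reductions yields $\limsup 2^{n-1}t_n([-1,1],w)\le e^{I(w)}$.

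The lower bound is the substantive half, and the conceptual obstacle is the constant $2$: transferring to the circle via $x=\frac12(z+z^{-1})$ and applying Jensen's inequality only gives $t_n([-1,1],w)\ge 2^{-n}e^{I(w)}$, this estimate being saturated by monic polynomials whose zeros all lie in $[-1,1]$. The missing factor of $2$ is a genuine real-line feature and has to be extracted from Markov's exact formula. The device is to \emph{turn the zeros of $w$ into poles}. For any monic $P$ of degree $n$, write $|x-b_k|^{\alpha_k}=|x-b_k|^{\alpha_k-\lceil\alpha_k\rceil}\,\bigl|(x-b_k)^{\lceil\alpha_k\rceil}\bigr|$ for each $k$ with $\alpha_k>0$; then $w|P|=\tilde w\,|P_\star|$ where $P_\star$ is monic of degree $n+N'$ and $\tilde w$ has all its exponents at the $b_k$ equal to values in $(-1,0]$. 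Crucially $\tilde w$ is then \emph{bounded below by a positive constant} (it may blow up at the $b_k$, but $|x-b_k|^{e}\ge 2^{e}$ for $e\le 0$ and $|x-b_k|\le 2$ on $[-1,1]$). Hence $t_n([-1,1],w)\ge t_{n+N'}([-1,1],\tilde w)$, and capping $\tilde w$ at a large level $T$ gives a continuous weight $\tilde w'\le\tilde w$, bounded above and below, with $I(\tilde w')\uparrow I(\tilde w)$ as $T\to\infty$ (monotone convergence, $\log\tilde w$ being integrable). For the continuous weight $\tilde w'$ one now runs the mirror image of the upper-bound argument—approximating $1/\tilde w'^{\,2}$ from \emph{above} by a positive polynomial to produce a Markov \emph{minorant} whose $t_n$ is given exactly by Theorem~\ref{thm:markov_weighted_chebyshev}—obtaining $\liminf 2^{n-1}t_n([-1,1],\tilde w')\ge e^{I(\tilde w')}$. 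Letting $T\to\infty$ (and, if needed, replacing a Riemann integrable $w_0$ by lower semicontinuous approximants from below beforehand), the power-of-$2$ bookkeeping once more matches $2^{-N'}e^{I(\tilde w)}=e^{I(w)}$, which completes the proof. The only point requiring genuine care, throughout, is the convergence of the various log-integrals under truncation and regularization, and this is where the integrability of $\log|x-b_k|$ against the equilibrium measure of $[-1,1]$—i.e.\ Lemma~\ref{lem:log_integral_equilibrium_measure}—does the work.
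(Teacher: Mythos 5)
Your proposal is correct, and its core engine is the same as the paper's: Markov's exact formula (Theorem~\ref{thm:markov_weighted_chebyshev}) plus Weierstra{\ss} approximation of a bounded positive weight from above and below by weights of the form $(\text{positive polynomial})^{-1/2}$, combined with monotonicity of $t_n$ in the weight and Lemma~\ref{lem:log_integral_equilibrium_measure} to evaluate the log-integrals; that is exactly the paper's first step, and both you and the paper defer the lift from continuous to Riemann integrable $w_0$ to an $L^1$-approximation argument. Where you genuinely diverge is in the treatment of the singular factors $|x-b_k|^{\alpha_k}$. The paper proceeds case by case: positive integer exponents via the shifted weight $w_0|x-b-i\epsilon|^m$ (upper bound) and the identity $\|wP\|=\|w_0\,(x-b)^mP\|$ (lower bound), negative integer exponents via the forced zero of $T_{n+m}^{w}$ at $b$, and fractional exponents in $(0,1)$ by sandwiching between $w_l^\epsilon$ and $w_u^\epsilon$. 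You instead extract ceiling powers $(x-b_k)^{\lceil\alpha_k\rceil}$ (resp. $(x-b_k)^{\lceil-\alpha_k\rceil}$) as monic factors, which converts all positive exponents into exponents in $(-1,0]$ for the lower bound and all negative exponents into nonnegative ones for the upper bound, and then you reduce to the bounded case by truncation $\min(\tilde w,T)$ resp. by a continuous majorant, with the powers of $2$ cancelling through $I(|x-b|)= -\log 2$. This handles integer and fractional, positive and negative exponents in one stroke and avoids the complex $i\epsilon$ shift entirely, at the price of a slightly more careful bookkeeping of degrees and of the truncation limits; the paper's route, conversely, needs no truncation and keeps each approximation step elementary. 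Two small points of care in your write-up: the capped weight $\min(\tilde w,T)$ is not continuous when $w_0$ is merely Riemann integrable (it is bounded, bounded below, and Riemann integrable, which is all you need, and you do flag the regularization of $w_0$ separately), and the continuous majorant $v\ge\tilde w$ with $I(v)\le I(\tilde w)+\varepsilon$ should be constructed explicitly as, say, $\max(\tilde v,\eta)$ for a continuous majorant $\tilde v$ of $\tilde w$ and small $\eta$, using the integrability of $\log|x-b_k|$ against $dx/\sqrt{1-x^2}$ exactly as you indicate. Neither point affects the validity of the argument.
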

		
		\begin{remark}
			Note that Theorem \ref{thm:Bernstein} remains valid even when \(\alpha_k<0\), however, in this case \(w\) is not a bounded function, and the corresponding extremal polynomial will have to have zero that compensates for the effect of the pole.
			
			This result has recently been extended to weights with more general types of zeros in~\cite[Theorem 2.5]{alpan-zinchenko26}.
		\end{remark}
		Given two functions $f,g:\N\rightarrow \C$, such that $g(n)\neq 0$ for sufficiently large $n$, we use the notation $f\sim g$ as $n\rightarrow\infty$ to mean 
		\[\lim_{n\rightarrow \infty}\frac{f(n)}{g(n)}=1.\]
		
	\begin{proof}
				  	The proof will be carried out in two steps. Assume initially that $w$ is a continuous function on $[-1,1]$ satisfying
	\begin{equation}
	    \label{eq:weight_bounds}
	    \frac{1}{M} \leq w(x) \leq M       
	\end{equation}
for some $M\geq 1$.  In this more general setting, we no longer have equality in \eqref{eq:achieser_norm} however these quantities are still asymptotically equivalent as $n\to\infty$.

The idea is to approximate $w$ by weight functions that are reciprocals of polynomials 
and then applying Theorem \ref{thm:markov_weighted_chebyshev}. If $Q_1$ and $Q_2$ are two polynomials of degree $m$ such that
\[0<\frac{1}{Q_{1}(x)}<w(x)<\frac{1}{Q_{2}(x)},\quad x\in [-1,1]\]
then it is easily seen by the minimality of the weighted Chebyshev polynomial that
\begin{equation}
	\|T_n^{1/Q_2}/Q_2\|_{[-1,1]}\geq \|T_n^{1/Q_2}w\|_{[-1,1]}\geq \|T_n^{w}w\|_{[-1,1]}\geq \|T_n^{w}/Q_1\|_{[-1,1]}\geq \|T_n^{1/Q_1}/Q_1\|_{[-1,1]}.
	\label{eq:monotonicity_cheb_weight}
\end{equation}
Theorem \ref{thm:markov_weighted_chebyshev} therefore implies that if $n>m$ 
\[2^{1-n}\exp\left(-\frac{1}{\pi}\int_{-1}^{1}\frac{\log Q_1(x)}{\sqrt{1-x^2}}dx\right)\leq \|T_n^ww\|_{[-1,1]}\leq 2^{1-n}\exp\left(-\frac{1}{\pi}\int_{-1}^{1}\frac{\log Q_2(x)}{\sqrt{1-x^2}}dx\right).\]
From this we conclude that
\begin{align}
	\begin{split}
		\exp\left(-\frac{1}{\pi}\int_{-1}^{1}\frac{\log Q_1(x)}{\sqrt{1-x^2}}dx\right)&\leq \liminf_{n\rightarrow\infty}	2^{n-1}\|T_n^ww\|_{[-1,1]}\\&\leq \limsup_{n\rightarrow \infty}2^{n-1}\|T_n^ww\|_{[-1,1]}\leq \exp\left(-\frac{1}{\pi}\int_{-1}^{1}\frac{\log Q_2(x)}{\sqrt{1-x^2}}dx\right).
	\end{split}
	\label{eq:limsupinf_polynomial_weights}
\end{align}
On the other hand we also see that
\begin{equation}
	\exp\left(-\frac{1}{\pi}\int_{-1}^{1}\frac{\log Q_1(x)}{\sqrt{1-x^2}}dx\right)\leq \exp\left(\frac{1}{\pi}\int_{-1}^{1}\frac{\log w(x)}{\sqrt{1-x^2}}dx\right)\leq \exp\left(-\frac{1}{\pi}\int_{-1}^{1}\frac{\log Q_2(x)}{\sqrt{1-x^2}}dx\right)
	\label{eq:squeezed_integrals}
\end{equation}
holds for any such configuration. Given $\epsilon\in(0,1/M)$ we can apply Weierstra\ss' Approximation Theorem to conclude that there are polynomials $Q_1$ and $Q_2$ such that
\[0<w(x)-\epsilon<\frac{1}{Q_1(x)}<w(x)<\frac{1}{Q_2(x)}<w(x)+\epsilon.\]
As $\epsilon\rightarrow 0^+$ the dominated convergence theorem implies that the expressions in \eqref{eq:squeezed_integrals} converges to 
\[\exp\left(\frac{1}{\pi}\int_{-1}^{1}\frac{\log w(x)}{\sqrt{1-x^2}}dx\right).\]
We therefore conclude, using \eqref{eq:limsupinf_polynomial_weights}, that
\[\lim_{n\rightarrow \infty}2^{n-1}\|T_n^ww\|_{[-1,1]} = \exp\left(\frac{1}{\pi}\int_{-1}^{1}\frac{\log w(x)}{\sqrt{1-x^2}}dx\right)\]
which concludes the argument when $w$ is continuous. We refer the reader to \cite{christiansen-eichinger-rubin23} for details on how to lift this argument to Riemann integrable functions. The argument is essentially the same since such functions can be approximated from above and below in the $L^1$ norm by step functions which in turn can be approximated by continuous functions.
	
	The final step in proving Theorem \ref{thm:Bernstein} is to allow for zeros at certain points of $[-1,1]$.  We will adopt the approach outlined in \cite{bernstein31} and restrict our examination to the introduction of a single zero $b_1 = b$ within the interval $[-1, 1]$ using the weight $|x-b|^{\alpha}$.  That is, we consider weights of the form
	\begin{equation}  \label{w}
	   w(x)=w_0(x) \vert x-b \vert^\alpha,
	\end{equation}	
where $w_0$ is Riemann integrable and satisfies $1/M \leq w_0(x) \leq M$ for some constant $M\geq 1$.  More zeros can be added by repeated use of the argument we are about to explain.  

   Assume first that $\alpha=m$ is a positive integer. For $\epsilon>0$, consider the weight function
		\begin{equation}
			w^\epsilon(x) = w_0(x)|x-b-i\epsilon|^m.
		\end{equation}
		This weight is non-vanishing on $[-1,1]$ and fulfills all the previous requirements for \eqref{eq:bernstein_asymptotics} to be valid.  By use of Lemma \ref{lem:log_integral_equilibrium_measure}, we hence find that
	\begin{align}
	\begin{split}
		 \| w^\epsilon T_n^{w^\epsilon}\|_{[-1, 1]}
	& \sim 2^{1-n} \exp\left(\frac{1}{\pi}\int_{-1}^{1}\frac{\log w_0(x)}{\sqrt{1-x^2}}dx\right)
	 \exp\left(\frac{1}{\pi}\int_{-1}^{1}\frac{\log |x-b-i\epsilon|^{m}}{\sqrt{1-x^2}}dx\right)\\  
	& \sim \|w_0 T_n^{w_0}\|_{[-1,1]}
	\Biggl(\frac{\left|(b+i\epsilon)+\sqrt{(b+i\epsilon)^2-1}\right|}{2}\Biggr)^m
	\end{split}
            \label{eq:asymptotics_upper}
		\end{align}
as $n\to\infty$.  Note that the second factor in \eqref{eq:asymptotics_upper}
        converges to $2^{-m}$ as $\epsilon\rightarrow 0$.  Since $w^\epsilon>w$ on~$[-1,1]$, we deduce as in \eqref{eq:monotonicity_cheb_weight} that 
		\begin{align}
            \begin{split}
            \|w^\epsilon T_n^{w^\epsilon}\|_{[-1,1]}& \geq \|wT_n^{w^\epsilon}\|_{[-1,1]}
            \geq \|wT_n^w\|_{[-1,1]}  \\ 
            &\geq \|w_0T_{n+m}^{w_0}\|_{[-1,1]} 
            \sim 2^{-m}\|w_0 T_{n}^{w_0}\|_{[-1,1]},    
            \end{split}
   \label{eq:upper_lower_bound}
		\end{align}
		as $n\rightarrow \infty$. The combination of \eqref{eq:asymptotics_upper} and \eqref{eq:upper_lower_bound} now implies that
	\begin{equation}
		\|wT_n^w\|_{[-1,1]} \sim 2^{-m}\|w_0 T_n^{w_0}\|_{[-1,1]}
		\sim 2^{1-n}\exp\left(\frac{1}{\pi}\int_{-1}^{1}\frac{\log w(x)}{\sqrt{1-x^2}}dx\right)
		\end{equation}
as $n\to\infty$, and this proves that \eqref{eq:bernstein_asymptotics} is valid for weights of the form \eqref{w} with $\alpha=m\in\N$. 

We next consider the case of negative integer powers.  If $\alpha=-m$ is a negative integer, then $T_{n+m}^{w}$ has a zero of order $m$ at $x = b$ and hence
		\begin{equation}
			w(x) T_{n+m}^{w}(x) = w_0(x) T_n^{w_0}(x)\frac{(x-b)^m}{|x-b|^m}.
		\end{equation}
Therefore,
		\begin{align*}
		\begin{split}
		\|wT_{n+m}^{w}\|_{[-1,1]} &\sim 
		  2^{1-n}\exp\left(\frac{1}{\pi}\int_{-1}^{1}\frac{\log w_0(x)}{\sqrt{1-x^2}}dx\right)\\
		  & \sim 2^{1-n-m}\exp\left(\frac{1}{\pi}\int_{-1}^{1}\frac{\log w(x)}{\sqrt{1-x^2}}dx\right)
		\end{split}
		\end{align*}
as $n\to\infty$, and this proves the validity of \eqref{eq:bernstein_asymptotics} for such weights as well. 
		
		To handle arbitrary exponents $\alpha\in \R$, it suffices to examine the case of $\alpha\in (0,1)$. The weight function $w_0$ can namely incorporate zeros with integer exponents on $[-1,1]$ as we have already addressed the asymptotics in this particular case. Let $\epsilon>0$ and form the weight functions
		\begin{align*}
			w_l^{\epsilon}(x)&=\begin{cases}
				w(x),& |x-b|\geq\epsilon \\
				w_0(x)|x-b|,& |x-b|<\epsilon,
			\end{cases}  \\
			w_u^{\epsilon}(x)&=\begin{cases}
				w(x),& |x-b|\geq\epsilon \\
				w_0(x),& |x-b|<\epsilon.
			\end{cases}
		\end{align*}
Note that \eqref{eq:bernstein_asymptotics} applies to both $w_l^{\epsilon}$ and $w_u^{\epsilon}$. Regardless of the value of $0<\epsilon<1$, we have the inequalities
		\begin{equation*}
			w_l^{\epsilon}(x)\leq w(x)\leq w_u^{\epsilon}(x)
		\end{equation*}
and hence 
		\begin{equation}
		   \|w_l^{\epsilon}T_n^{w_l^{\epsilon}}\|_{[-1,1]}
		   \leq \|w T_n^{w}\|_{[-1,1]}
		   \leq \|w_u^{\epsilon}T_n^{w_u^{\epsilon}}\|_{[-1,1]}.\label{eq:weighted_chebyshev_inequalities}
		   \end{equation}	
Since we already know the limiting values of the upper and lower bounds we consequently have

\begin{align}
	\begin{split}
		\exp\left(\frac{1}{\pi}\int_{-1}^{1}\frac{\log  w_l^{\epsilon}(x)}{\sqrt{1-x^2}}dx\right)&\leq \liminf_{n\rightarrow\infty}	2^{n-1}\|T_n^ww\|_{[-1,1]}\\&\leq \limsup_{n\rightarrow \infty}2^{n-1}\|T_n^ww\|_{[-1,1]}\leq \exp\left(\frac{1}{\pi}\int_{-1}^{1}\frac{\log w_u^{\epsilon}(x)}{\sqrt{1-x^2}}dx\right).
	\end{split}
\end{align}
By dominated convergence, 
	\begin{equation*}
		\lim_{\epsilon\rightarrow 0} \int_{-1}^{1}\frac{\log w_l^{\epsilon}(x)}{\sqrt{1-x^2}}dx = 
		\int_{-1}^{1}\frac{\log w(x)}{\sqrt{1-x^2}}dx =
		\lim_{\epsilon\rightarrow 0} \int_{-1}^{1}\frac{\log w_u^{\epsilon}(x)}{\sqrt{1-x^2}}dx,
	\end{equation*}
	and therefore we can conclude that
	 \begin{equation}
	    \|wT_n^w\|_{[-1,1]} \sim 
	    2^{1-n}\exp\left(\frac{1}{\pi}\int_{-1}^{1}
	    \frac{\log w(x)}{\sqrt{1-x^2}}dx\right)
	 \end{equation}
as $n\rightarrow \infty$. In other words, \eqref{eq:bernstein_asymptotics} applies.  The addition of more weights of the form $|x-b_k|^{\alpha_k}$ can be carried out by repeated use of the argument explained above.

	\end{proof}

	Apart from Bernstein's formula in Theorem \ref{thm:Bernstein}, few asymptotic results have been established for Chebyshev polynomials associated with weights which vanish on parts of the interval $[-1,1]$. If $w>0$ holds almost everywhere then the $n$th root asymptotics are known. In this case  \[\lim_{n\rightarrow \infty}\left(T_n^{[-1,1],w}(z)\right)^{1/n} = \frac{z+\sqrt{z^2-1}}{2}\]
	holds for $z\notin [-1,1]$, see \cite{fekete-walsh54}. The precise asymptotical behavior -- so-called Szeg\H{o}--Widom asymptotics -- of $T_n^{[-1,1],w}$ on $\C\setminus [-1,1]$ were determined in \cite{lubinsky-saff87} in the case where $w$ is strictly positive. In \cite{kroo-peherstorfer08,kroo14} the asymptotical behavior of $T_n^{[-1,1],w}$ on $[-1,1]$ was determined for positive smooth weight functions. This was done using Chebyshev's result for weights $w$ which are given as reciprocals of polynomials that are strictly positive on $[-1,1]$ together with a polynomial approximation argument. We will later see that Theorem \ref{thm:Bernstein} has applications to the analysis of Chebyshev polynomials corresponding to sets in the complex plane.
	
    Following in Bernstein's footsteps, Achieser considered in 1933 the case of weighted Chebyshev polynomials with respect to disjoint intervals of the form $\sfE(a,b) = [-1,-a]\cup [b,1]$ with~$0<a,b<1$. In the particular case where $b = a$, we write $\sfE(a,b) = \sfE(a)$.  

    \begin{theorem}[Achieser (1933) \cite{achieser28}]
    	For any $n\in \N$
    	\begin{equation}
    		t_{2n}(\sfE(a))= 2^{1-2n}(1-a^2)^n. \\
    	\end{equation}	
    	As $n\rightarrow \infty$
    	\begin{align*}
    		t_{2n+1}(\sfE(a))  \sim 2^{-2n}(1-a^2)^{n+\frac{1}{2}}\sqrt{\frac{1+a}{1-a}}.
    	\end{align*}
    	\label{thm:achieser_disjoint_intervals}
    \end{theorem}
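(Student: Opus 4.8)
The plan is to exploit the reflection symmetry $x\mapsto -x$ of the set $\sfE(a)=[-1,-a]\cup[a,1]$, together with the substitution $t=x^2$, to reduce everything to a (weighted) Chebyshev problem on the single interval $[a^2,1]$. If $T=T_N^{\sfE(a)}$ is the Chebyshev polynomial of degree $N$, then $(-1)^N T(-x)$ is again monic of degree $N$ and has the same supremum norm on the symmetric set $\sfE(a)$, so by the uniqueness in Theorem \ref{thm:chebyshev_existence_uniqueness} it equals $T$; hence $T_{2n}^{\sfE(a)}$ is even and $T_{2n+1}^{\sfE(a)}$ is odd. An even monic polynomial of degree $2n$ has the form $P(x^2)$ with $P$ monic of degree $n$, an odd monic polynomial of degree $2n+1$ has the form $x\,Q(x^2)$ with $Q$ monic of degree $n$, and $x\mapsto x^2$ maps $\sfE(a)$ onto $[a^2,1]$, with $|x|=\sqrt{x^2}$. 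Therefore, with $w(t)=\sqrt{t}$ on $[a^2,1]$,
\[
t_{2n}(\sfE(a)) = t_n([a^2,1]), \qquad t_{2n+1}(\sfE(a)) = t_n([a^2,1],w),
\]
and moreover $T_{2n}^{\sfE(a)}(x) = T_n^{[a^2,1]}(x^2)$ and $T_{2n+1}^{\sfE(a)}(x) = x\,T_n^{[a^2,1],w}(x^2)$.

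For the even case this already produces the stated identity: the unweighted Chebyshev polynomial on an interval $[c,d]$ is an affine rescaling of the classical $T_n$ on $[-1,1]$, so $t_n([c,d]) = 2\bigl(\tfrac{d-c}{4}\bigr)^n$ exactly, and taking $c=a^2$, $d=1$ gives $t_{2n}(\sfE(a)) = 2^{1-2n}(1-a^2)^n$ for every $n$.

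For the odd case I would invoke Bernstein's theorem. Since $a>0$, the weight $w$ is smooth and bounded between positive constants on $[a^2,1]$, and an affine change of variables carrying $[a^2,1]$ onto $[-1,1]$ turns it into a continuous weight to which Theorem \ref{thm:Bernstein} applies; this yields the interval version
\[
t_n([a^2,1],w) \sim 2\Bigl(\tfrac{1-a^2}{4}\Bigr)^{n}\exp\Bigl\{\tfrac{1}{2\pi}\int_{a^2}^{1}\tfrac{\log t}{\sqrt{(t-a^2)(1-t)}}\,dt\Bigr\}
\]
as $n\to\infty$. It remains to evaluate the integral. The same affine change of variables, using $0\notin[a^2,1]$, rewrites $\tfrac{1}{\pi}\int_{a^2}^{1}\tfrac{\log t}{\sqrt{(t-a^2)(1-t)}}\,dt$ as $\log\tfrac{1-a^2}{2}$ plus the integral of Lemma \ref{lem:log_integral_equilibrium_measure} at the real point $z=-(1+a^2)/(1-a^2)$, which lies to the left of $[-1,1]$; since $\bigl|z+\sqrt{z^2-1}\bigr| = (1+a)/(1-a)$ for the branch of modulus exceeding $1$, this integral equals $\log\tfrac{(1+a)^2}{4}$, so the exponent above is $\log\tfrac{1+a}{2}$. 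Substituting back,
\[
t_{2n+1}(\sfE(a)) \sim 2\Bigl(\tfrac{1-a^2}{4}\Bigr)^n\cdot\tfrac{1+a}{2} = 2^{-2n}(1-a^2)^{n+\frac12}\sqrt{\tfrac{1+a}{1-a}},
\]
which is the asserted asymptotics.

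The computations are routine; the load-bearing idea is the symmetry-plus-substitution reduction. The main point to get right is \emph{why} the even case is exact while the odd case is only asymptotic: the substitution turns the odd problem into a weighted Chebyshev problem with weight $\sqrt{t}$, which is not a reciprocal of a polynomial, so Markov's exact formula (Theorem \ref{thm:markov_weighted_chebyshev}) does not apply and one must fall back on Bernstein's asymptotic result. A minor technical care is the choice of branch of $z+\sqrt{z^2-1}$ (the one of modulus greater than $1$) when inserting the negative real point $z$ into Lemma \ref{lem:log_integral_equilibrium_measure}.
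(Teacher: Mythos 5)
Your proposal is correct and follows essentially the same route as the paper's proof: the symmetry argument reducing to even/odd polynomials, the substitution $t=x^2$ onto $[a^2,1]$, the exact affine rescaling for the even case, and Bernstein's theorem (Theorem \ref{thm:Bernstein}) combined with Lemma \ref{lem:log_integral_equilibrium_measure} evaluated at $z=-(1+a^2)/(1-a^2)$ for the odd case. The branch bookkeeping and the final simplification to $2^{-2n}(1-a^2)^n(1+a)$ both check out.
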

    \vspace{-.5cm}
   	Getting a bit ahead of our narrative, the number \(\tau(a) = \frac{1}{2}\sqrt{1-a^2}\) is the so-called \textit{transfinite diameter} or \textit{logarithmic capacity} of \(\sfE(a)\). As we see from this example
   	\[\lim_{n\to\infty}t_n(\sfE(a))^{1/n} = \tau(a),\]
   	and in fact as the forthcoming Theorem \ref{thm:ffs-theorem} shows, \(t_n(\sfE)^{1/n}\) has a limit as \(n\to\infty\) for any compact set \(\sfE\). Theorem \ref{thm:achieser_disjoint_intervals} implies that
   	\begin{equation}
   		\frac{t_{2n}(\sfE(a))}{\tau(a)^{2n}} = 2,\quad \lim_{n\to\infty}\frac{t_{2n+1}(\sfE(a))}{\tau(a)^{2n+1}} =2 \cdot \sqrt{\frac{1+a}{1-a}}.
   		\label{eq:achieser_limit_pts}
   	\end{equation}
   	
   	Achieser provides the full asymptotic formula for any choice of $0<a,b<1$ including the possible effects of a weight function. This generalizes Bernstein's formula and we refer the reader to \cite[Appendix E]{achieser56} for details. The emerging pattern is that in the generic case when~$a$ and $b$ differ, by letting 
   	\[\tau(a,b) = \lim_{n\to\infty}t_n(\sfE(a,b))^{1/n}\]
   	the sequence $\{t_n(\sfE(a,b))/\tau(a,b)^n\}$ has a full interval of limit points rather than just two points as given in \eqref{eq:achieser_limit_pts}. A recent proof of Theorem \ref{thm:achieser_disjoint_intervals} using elliptic functions is given in \cite{schiefermayr19}, we will provide a novel proof based on Theorem \ref{thm:Bernstein}. Theorem \ref{thm:achieser_disjoint_intervals} points out a recurring phenomenon in the world of Chebyshev polynomials associated to compact sets which have several components. The limit behavior of $t_n(\sfE,w)$ may differ along different subsequences. This was studied in further detail by Widom in \cite{widom69}.

	\begin{proof}[Proof of Theorem \ref{thm:achieser_disjoint_intervals}]
		Due to symmetry of $\sfE(a)$ and uniqueness of the corresponding Chebyshev polynomial we obtain $T_{n}^{\sfE(a)}(-x) = (-1)^nT_{n}^{\sfE(a)}(x)$. Equivalently, $T_{2n}^{\sfE(a)}$ is an even polynomial and $T_{2n+1}^{\sfE(a)}$ is an odd polynomial. We write
\begin{align*}
T_{2n}^{\sfE(a)}(x) & = x^{2n}+\sum_{k=0}^{n-1}a_kx^{2k} = P_{n}^{\sfE(a)}(x^2),\\
T_{2n+1}^{\sfE(a)}(x) &= x^{2n+1}+\sum_{k=0}^{n-1}b_kx^{2k+1} = xQ_n^{\sfE(a)}(x^2)	
\end{align*}

where $Q_n^{\sfE(a)}$ and $P_n^{\sfE(a)}$ are $n$th degree monic polynomials. By changing the variable from $x$ to $t = x^2$ we find that $Q_n^{\sfE(a)}$ and $P_n^{\sfE(a)}$ are the $n$th degree monic minimizer of the expressions
\[
\max_{t\in [a^2,1]}\left|w(t)\left(t^n+\alpha_{n-1}t^{n-1}+\cdots +\alpha_{0} \right)\right|
\]
with $w(t) = \sqrt{t}$ and $w(t) =1$ respectively. By Theorem \ref{thm:alternation_theorem} one immediately concludes that~$P_n^{\sfE(a)}$ can be explicitly represented by
\[P_n^{\sfE(a)}(t) = T_n^{[-1,1]}\left(\frac{2t-1-a^2}{1-a^2}\right)\left(\frac{1-a^2}{2}\right)^n.\]
We may therefore conclude that
\[t_{2n}(\sfE(a)) = 2^{1-2n}(1-a^2)^n.\]

To determine the odd norms we consider the change of variables $\xi = \frac{2}{1-a^2}\left(t-\frac{1+a^2}{2}\right)\Leftrightarrow \frac{1-a^2}{2}\xi+\frac{1+a^2}{2}$. This yields
\[t_{2n+1}(\sfE(a))= \min_{\beta_0,\dotsc, \beta_{n-1}}\max_{\xi\in [-1,1]}\left(\frac{1-a^2}{2}\right)^n\left|\sqrt{\frac{1-a^2}{2}\xi+\frac{1+a^2}{2}}\left(\xi^n+\sum_{k=0}^{n-1}\beta_k\xi^k\right)\right|.\]
Theorem \ref{thm:Bernstein} provides the precise asymptotical formula
\[t_{2n+1}(\sfE(a)) \sim  2^{1-n}\left(\frac{1-a^2}{2}\right)^n\exp\left(\frac{1}{\pi}\int_{-1}^{1}\frac{\log \sqrt{\frac{1-a^2}{2}\xi+\frac{1+a^2}{2}}}{\sqrt{1-\xi^2}}d\xi\right)\]
as $n\rightarrow \infty$. The integral is effectively computed using Lemma \ref{lem:log_integral_equilibrium_measure} and we find that
\[\exp\left(\frac{1}{\pi}\int_{-1}^{1}\frac{\log \sqrt{\frac{1-a^2}{2}\xi+\frac{1+a^2}{2}}}{\sqrt{1-\xi^2}}d\xi\right) = \frac{1+a}{2}.\]
In conclusion,
\[t_{2n+1}(\sfE(a)) \sim  2^{1-n}\left(\frac{1-a^2}{2}\right)^n\frac{1+a}{2}.\qedhere\]
	\end{proof}
	
Theorem \ref{thm:achieser_disjoint_intervals} has applications to the study of Chebyshev polynomials corresponding to arc of the unit circle. We will return to this in Section \ref{subs:cheb_arc}.

\section{Chebyshev polynomials in the complex plane}

	\label{sec:chebyshev_complex}

	The first broadening of the concept of Chebyshev polynomials to the setting of the complex plane is due to Faber in \cite{faber20}. This foundational work, published in 1919, predates the contributions of Bernstein and Achieser. We begin by exploring Faber's results and the subsequent developments by Suetin and Widom, employing the framework of conformal mappings. Later, we consider the applications of potential theory to this area, providing the general results established by Fekete and Szeg\H{o} among others. Familiarity with the material in \cite{ransford95} is assumed for references to potential theory.
	
	We mention that Faber's generalization of Chebyshev polynomials and their residual counterparts \cite{christiansen-simon-zinchenko-V} are of interest within the study of convergence estimates for Krylov subspace methods which are used to analyze spectral properties of a matrix as well as solving linear systems, see e.g. \cite{beckermann-kuijlaars02,toh-trefethen98}.

\subsection{Chebyshev polynomials relative to a Jordan curve}	
\label{subs:cheb_curve}
Faber begins his investigation in \cite{faber20} by letting $T_n^{\sfE}$ be defined as the monic minimizer of degree $n$ with respect to the supremum norm on a compact set $\sfE$ in $\C$. He then goes on with mentioning some simple geometric cases where these polynomials can be explicitly determined. Without proof, he states the following result, although he claims that it is ``just as obvious'' as the determination of \eqref{eq:cheb_unit_cirlce}. We choose to include a proof here since it illustrates a reasoning that is central to estimates of Chebyshev polynomials in the complex plane, an argument that will be reused several times throughout.
	\begin{theorem}[Faber (1920) \cite{faber20}]
		Let $P(z) = z^m+a_{m-1}z^{m-1}+\cdots+a_0$. If
		\[\sfE(r) = \{z: |P(z)| = r^m\}\]
		for $r>0$, then
		\[T_{nm}^{\sfE(r)}(z) = P(z)^n\]
		for any $n\in \N$.
		\label{thm:chebyshev_on_lemnsicate}
	\end{theorem}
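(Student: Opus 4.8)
The plan is to mimic the argument used for the unit circle in \eqref{eq:cheb_unit_cirlce}, where the key tool was the maximum modulus principle applied on the unbounded component of the complement. Here the role of $z^n$ is played by $P(z)^n$, which is monic of degree $nm$ and satisfies $\|P^n\|_{\sfE(r)} = r^{nm}$ by the very definition of $\sfE(r)$. So it suffices to show that any monic polynomial $R$ of degree $nm$ satisfies $\|R\|_{\sfE(r)} \geq r^{nm}$; then $P^n$ attains this lower bound and the uniqueness from Theorem \ref{thm:chebyshev_existence_uniqueness} forces $T_{nm}^{\sfE(r)} = P^n$.

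To prove the lower bound, first I would establish that $\sfE(r)$ is a compact set whose unbounded complementary component $\Omega$ has boundary contained in $\sfE(r)$ — indeed $\sfE(r) = P^{-1}(\{|w| = r^m\})$ is the preimage under the proper map $P$ of the circle $\{|w| = r^m\}$, so it is compact, and $\Omega$ is the component of $\C \setminus \sfE(r)$ containing a neighborhood of $\infty$, on which $|P(z)| > r^m$. Now consider the ratio $R(z)/P(z)^n$. This is analytic on $\Omega$ except possibly at the finitely many zeros of $P$ in $\Omega$; but $P$ has no zeros in $\Omega$ since $|P| > r^m > 0$ there, so the ratio is analytic on $\Omega$, and at $\infty$ it tends to $1$ because both numerator and denominator are monic of the same degree $nm$. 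Applying the maximum modulus principle to $R(z)/P(z)^n$ on $\Omega \cup \{\infty\}$ (viewed on the Riemann sphere $\RS$), its modulus is bounded by its supremum over $\partial\Omega \subset \sfE(r)$, where $|P|^n = r^{nm}$; hence $1 = |R(\infty)/P(\infty)^n| \leq \sup_{\partial\Omega} |R|/r^{nm} \leq \|R\|_{\sfE(r)}/r^{nm}$, giving $\|R\|_{\sfE(r)} \geq r^{nm}$ as required.

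The main subtlety — and what I expect to be the principal obstacle to making this watertight — is the topological description of $\sfE(r)$ and its complement: one must argue carefully that $\partial\Omega \subseteq \sfE(r)$ (which is automatic since $\sfE(r)$ is closed and $\Omega$ is open with $|P| = r^m$ on its boundary by continuity), and that the maximum modulus principle applies on the possibly disconnected, possibly non-smooth region $\Omega$. The cleanest route is to invoke the maximum principle in the form valid for bounded analytic functions on arbitrary domains in $\RS$ with the limiting value at $\infty$ playing the role of an interior value: since $R/P^n$ extends analytically across $\infty$ with value $1$ and is bounded on $\Omega$, its modulus cannot exceed $\limsup$ of $|R/P^n|$ along approaches to $\partial\Omega$. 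A second minor point worth a sentence: for $r$ small the lemniscate $\sfE(r)$ may have several components and $P$ may take the value $0$ in the bounded complementary components, but this is irrelevant since we only use analyticity of $R/P^n$ on the \emph{unbounded} component, where $P$ is zero-free. Once the lower bound $\|R\|_{\sfE(r)} \geq r^{nm}$ is in hand, the proof concludes immediately by the uniqueness part of Theorem \ref{thm:chebyshev_existence_uniqueness}.
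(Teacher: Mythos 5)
Your proposal is correct and follows essentially the same route as the paper: form the quotient $R/P^n$, note it is analytic on the unbounded component of the complement (where $P$ is zero-free) and equals $1$ at infinity, apply the maximum principle to get $\|R\|_{\sfE(r)}\geq r^{nm}$, and conclude by uniqueness of the minimizer. The extra care you take with the topology of $\Omega$ and the inclusion $\partial\Omega\subseteq\sfE(r)$ only makes explicit what the paper leaves implicit.
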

	The set $\sfE(r)$ is called a lemniscate. If $r>0$ is sufficiently small then $\sfE(r)$ will contain as many components as the number of distinct zeros of $P$. For large enough $r$ on the other hand, $\sfE(r)$ will consist of one component. The notation \(\sfE(r)\) will be consistent when we later use this to denote the corresponding equipotential curves of \(\sfE\).
	\begin{proof}
		Let $Q$ be any monic polynomial of degree $nm$ and form the quotient
		\[\varphi(z) = \frac{Q(z)}{P(z)^n}.\]
		The function \(\varphi\) is analytic on $\{z: |P(z)|>r^m\}$ since all zeros of $P$ lie inside $\{z:|P(z)|<r^m\}$. Analyticity extends to the point at infinity since 
		\[\lim_{z\rightarrow \infty}\varphi(z) = 1.\]
		By the maximum principle applied to the unbounded component we find that
		\[1\leq \|\varphi\|_{\sfE(r)} = \frac{\|Q\|_{\sfE(r)}}{r^{nm}}.\]
		We conclude that $\|Q\|_{\sfE(r)}\geq r^{nm}$ holds for any monic polynomial $Q$ of degree $nm$. Since $\|P^n\|_{\sfE(r)} = r^{nm}$ we conclude that \(P^n\) is a minimizer, and since there is only one minimizer, this implies that $T_{nm}^{\sfE(r)} = P(z)^n$.
	\end{proof}
	Faber does not address the issue of determining Chebyshev polynomials for degrees other than multiples of the degree of the generating polynomial (in this case $nm$). In fact, the analysis of the remaining degrees can be quite involved, see e.g. \cite{bergman-rubin24, peherstorfer-steinbauer01}. One of the main points of Faber's article is to show that the classical Chebyshev polynomials $T_n^{[-1,1]}$ are also Chebyshev polynomials in the extended sense to ellipses in the complex plane with focii at~$\pm 1$. We will return to this in Section \ref{subs:equipotential}
	
	Perhaps of even greater influence on subsequent research in approximation theory, Faber demonstrated that a  class of polynomials, introduced by him in \cite{faber03} and now known as Faber polynomials, can be used to construct sequences of polynomials that, in certain cases, asymptotically achieve the same minimal norm as Chebyshev polynomials. We proceed with explaining this chain of ideas. For this purpose, let $\sfE$ denote a compact set and let $\Omega_\sfE$ denote the unbounded component of \(\overline{\C}\setminus\sfE.\) The maximum principle implies that $T_n^{\sfE} = T_n^{\partial\Omega_{\sfE}}$, and therefore it is of no importance in what follows whether we consider Chebyshev polynomials on a set or its corresponding outer boundary. With the additional assumption that $\Omega_{\sfE}$ is simply connected with respect to \(\RS\), the Riemann mapping theorem implies that there exists a conformal map $\Phi:\Omega_\sfE\rightarrow \RS\setminus \overline{\D}$ satisfying
	\begin{equation}
		\Phi(\infty) = \infty,\quad \Phi'(\infty)\coloneqq\lim_{z\rightarrow \infty}\frac{\Phi(z)}{z}>0,
		\label{eq:canonical_conformal_map}
	\end{equation}
	see e.g. \cite[Theorem VII.4.2]{conway78}. It follows that $\Phi$ has the Laurent series expansion
	\begin{equation}
		\Phi(z) = \Phi'(\infty)z+a_0+a_{-1}z^{-1}+\cdots
		\label{eq:conformal_map_laurent_expansion}
	\end{equation}
	at infinity. The argument used to prove Theorem \ref{thm:chebyshev_on_lemnsicate} has the following adaptation. Let $Q$ be any monic polynomial of degree $n\in \N$. Then
	\begin{equation}
		\varphi(z) = \Phi'(\infty)^n\frac{Q(z)}{\Phi(z)^n}
		\label{eq:maximum_modulus_func}
	\end{equation}
	is analytic on $\Omega_\sfE$ and $\varphi(\infty) = 1$. Since $|\Phi(z)|\rightarrow 1$ as $z\rightarrow \partial\Omega_\sfE$, the maximum modulus theorem implies that
	\[1\leq \|\varphi\|_{\sfE}= \Phi'(\infty)^n\|Q\|_{\sfE}\]
	and hence we obtain the following.
	
	\begin{theorem}[Faber 1920 \cite{faber20}]
	Let \(\sfE\) denote a Jordan curve in the complex plane with associated exterior map \(\Phi\) satisfying \eqref{eq:conformal_map_laurent_expansion}. Then,
	\begin{equation}
		\Phi'(\infty)^n t_n(\sfE)\geq 1.
		\label{eq:conformal_map_chebyshev_lower_bound}
	\end{equation}	
	\end{theorem}

To show that \(\Phi'(\infty)^n t_n(\sfE)\sim 1\), it therefore suffices to construct a family of
polynomials that asymptotically saturate this lower bound.
For equality to hold in \eqref{eq:conformal_map_chebyshev_lower_bound}, it follows from~\eqref{eq:maximum_modulus_func} that \(T_n^{\sfE,w}\) and \(\Phi(z)^n\) must be proportional.
This can occur if and only if \(\Phi(z)^n\) is itself a polynomial, which rarely happens.

To overcome this difficulty, one considers the Faber polynomial of degree \(n\) associated with
\(\sfE\), denoted by \(F_n^{\sfE}\), defined as the polynomial part of \(\Phi(z)^n\).
More precisely, \(F_n^{\sfE}\) is the unique polynomial satisfying
\begin{equation}
	F_n^{\sfE}(z)=\Phi(z)^n+O(z^{-1}), \qquad z\to\infty.
	\label{eq:faber_polynomial_definition}
\end{equation}

It is clear that $F_n^{\sfE}$ defined this way is an \(n\)th degree polynomial with leading coefficient~\(\Phi'(\infty)^n\) and consequently
	\begin{equation}
		\Phi'(\infty)^nt_n(\sfE)\leq \|F_n^{\sfE}\|_{\sfE}.
		\label{eq:faber_upper_bnd_cheb}
	\end{equation}
	Furthermore, \(F_n^{\sfE}-\Phi(z)^n\) is analytic in \(\Omega_\sfE\) and vanishes at \(\infty\). Therefore, if \(z\) and \(\sfE\) are contained inside the ball of radius \(R>0\) centered at the origin, then
	\[\left|\int_{|\zeta| = R}\frac{F_n^{\sfE}(\zeta)-\Phi(\zeta)^n}{\zeta-z}d\zeta\right|\leq \frac{2\pi R}{R-|z|}\max_{|\zeta| = R}|F_n^{\sfE}(\zeta)-\Phi(\zeta)^n|\to 0,\quad R\to \infty.\]
	However, since \(F_n^{\sfE}\) is a polynomial, Cauchys integral formula implies that
	\[\int_{|\zeta| = R}\frac{F_n^{\sfE}(\zeta)}{\zeta-z}d\zeta = F_n^{\sfE}(z).\]
	We obtain the representation
	\[F_n^{\sfE}(z) = \frac{1}{2\pi i}\int_{|\zeta| = R}\frac{\Phi(\zeta)^n}{\zeta-z}d\zeta.\]
	Alternatively, using the notation \[\sfE(r) = \{z: |\Phi(z)| =r \},\quad r>1\] to mean the level curves of \(\Phi\) parametrized in the positive direction, we find that if \(R>1\) and \(z\) lies interior to \(\sfE(R)\) then
	\begin{equation}
		F_n^{\sfE}(z) = \frac{1}{2\pi i}\int_{\sfE(R)}\frac{\Phi(\zeta)^n}{\zeta-z}d\zeta.
		\label{eq:faber_cauchy_representation}
	\end{equation}
	Letting \(\Psi:\overline{\C}\setminus \overline \D\to \Omega_\sfE\) denote the inverse of \(\Phi\), then the change of variables \(\zeta = \Psi(s)\) transforms \eqref{eq:faber_cauchy_representation} to
	\begin{equation*}
		F_n^{\sfE}(z) = \frac{1}{2\pi i}\int_{|s| = R}\frac{s^n\Psi'(s)}{\Psi(s)-z}ds.
	\end{equation*}
	As a consequence, we observe that the the analytic function \(s\mapsto \Psi'(s)/(\Psi(s)-z)\) has the Laurent series expansion 
	\begin{equation}
		\frac{\Psi'(s)}{\Psi(s)-z} = \sum_{k=0}^{\infty}F_k^{\sfE}(z)s^{-k-1},\quad s\to \infty.
		\label{eq:faber_laurent_series}
	\end{equation}

	If further, \(z\) lies within the domain of definition of \(\Phi\) with \(r<|\Phi(z)|<R\), then Cauchy's integral formula implies that
	\begin{align}
	\begin{split}
	\Phi(z)^n & = \frac{1}{2\pi i}\int_{\sfE(R)}\frac{\Phi(\zeta)^n}{\zeta-z}d\zeta-\frac{1}{2\pi i}\int_{\sfE(r)}\frac{\Phi(\zeta)^n}{\zeta-z}d\zeta \\
	& = F_n^{\sfE}(z)-\frac{1}{2\pi i}\int_{\sfE(r)}\frac{\Phi(\zeta)^n}{\zeta-z}d\zeta.
	\end{split}
	\label{eq:faber_on_bdry}
	\end{align}
	see also \cite[Chapter 2]{smirnov-lebedev68}.
	Faber, in \cite{faber20}, provides the following argument to show that if the boundary of $\sfE$ is smooth enough then $\Phi'(\infty)^{n}t_n(\sfE)\sim 1$ as $n\rightarrow \infty$. 
	
	Assume that $\sfE$ is the closure of an analytic Jordan domain or, equivalently formulated,~$\Phi$ extends analytically to some neighborhood of $\partial\Omega_{\sfE}$. Then there exists some \(r\in (0,1)\) such that \eqref{eq:faber_on_bdry} holds with \(z\in \sfE\), and we obtain
	\[F_n^{\sfE}(z) = \Phi(z)^n+\frac{1}{2\pi i}\int_{\sfE(r)}\frac{\Phi(\zeta)^n}{\zeta-z}d\zeta.\]
	We immediately conclude that if $z\in \sfE = \sfE(1)$ then there exists some $C>0$ -- independent of \(n\) and \(z\in \sfE\) -- such that
	\begin{equation}
		|F_n^{\sfE}(z)|\leq 1+Cr^n.
		\label{eq:faber_upper_est_analytic}
	\end{equation}
	By combining \eqref{eq:faber_upper_est_analytic} with \eqref{eq:conformal_map_chebyshev_lower_bound} and \eqref{eq:faber_upper_bnd_cheb} we find that
	\[1\leq \Phi'(\infty)^nt_n(\sfE)\leq 1+Cr^n\]
	and since $0<r<1$ we obtain the first half of the following theorem.
	
	\begin{theorem}[Faber (1920) \cite{faber20}]
		Let $\sfE$ denote the closure of an analytic Jordan domain with exterior conformal map $\Phi:\Omega_\sfE\rightarrow \RS\setminus \overline{\D}$ as in \eqref{eq:conformal_map_laurent_expansion}. Then there is some \(r\in (0,1)\) such that
		\label{thm:faber_asymptotics}
		\begin{equation}
			t_n(\sfE)\Phi'(\infty)^n = 1+O(r^n), \quad n\to\infty.
			\label{eq:faber_widom_asymptotics}
		\end{equation} 
		Furthermore,
		\begin{equation}
			T_n^{\sfE}(z)\Phi'(\infty)^n\Phi(z)^{-n}=1+o(1),\quad n\to \infty
			\label{eq:faber_szego_widom_asymptotics}
		\end{equation}
		uniformly on closed subsets of $\Omega_\sfE$.
	\end{theorem}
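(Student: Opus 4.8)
The first assertion \eqref{eq:faber_widom_asymptotics} has just been obtained from the Faber polynomial $F_n^{\sfE}$; for the argument below I only record that, with the same $C>0$ and $r\in(0,1)$, one has
\[
1\le M_n:=\Phi'(\infty)^n\,t_n(\sfE)\le 1+Cr^n,
\]
the lower bound being \eqref{eq:conformal_map_chebyshev_lower_bound} applied to $Q=T_n^{\sfE}$. It remains to prove \eqref{eq:faber_szego_widom_asymptotics}.

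The plan is to study the single function
\[
g_n(z):=\Phi'(\infty)^n\,\Phi(z)^{-n}\,T_n^{\sfE}(z),\qquad z\in\Omega_{\sfE},
\]
and to prove that $g_n\to1$ locally uniformly on $\Omega_{\sfE}$. First I would note that $g_n$ is analytic on $\Omega_{\sfE}$, since $\Phi$ has no zeros there, and that it extends analytically across $z=\infty$ with $g_n(\infty)=1$: indeed $T_n^{\sfE}(z)=z^n+O(z^{n-1})$ and $\Phi(z)=\Phi'(\infty)z+O(1)$, so the two leading behaviours cancel. Next, on $\partial\Omega_{\sfE}$ we have $|\Phi|=1$, hence $|g_n|=\Phi'(\infty)^n|T_n^{\sfE}|\le\Phi'(\infty)^n\|T_n^{\sfE}\|_{\sfE}=M_n$ there; by the maximum modulus principle $|g_n|\le M_n$ throughout $\Omega_{\sfE}$.

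The core of the proof is a quantitative Liouville-type estimate: a function on the exterior of a disk that is bounded by $M_n$ and equals $1$ at infinity must be uniformly close to $1$ once $M_n$ is close to $1$. Passing to the coordinate $w=\Phi(z)$, the function $h_n(w):=g_n(\Phi^{-1}(w))$ is analytic and bounded by $M_n$ on $\{|w|>1\}$ and has the Laurent expansion $h_n(w)=1+\sum_{k\ge1}c_k^{(n)}w^{-k}$ at infinity. Parseval's identity on the circle $|w|=R$ gives $\sum_{k\ge0}|c_k^{(n)}|^2R^{-2k}\le M_n^2$, and letting $R\downarrow1$ (monotone convergence) yields $\sum_{k\ge1}|c_k^{(n)}|^2\le M_n^2-1$. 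The Cauchy--Schwarz inequality then gives, for $|w|\ge1+\delta$,
\[
|h_n(w)-1|\le\Big(\sum_{k\ge1}|c_k^{(n)}|^2\Big)^{1/2}\Big(\sum_{k\ge1}|w|^{-2k}\Big)^{1/2}\le\frac{(M_n^2-1)^{1/2}}{\sqrt{(1+\delta)^2-1}}.
\]
Since $M_n^2-1\le(1+Cr^n)^2-1=O(r^n)\to0$, the right-hand side tends to $0$. Finally, any set closed in $\Omega_{\sfE}$ — being closed in the compact space $\RS$ and disjoint from $\partial\Omega_{\sfE}$ — is compact and bounded away from $\partial\Omega_{\sfE}$, so $|\Phi|\ge1+\delta$ on it for some $\delta>0$; hence $g_n\to1$ uniformly there, which is \eqref{eq:faber_szego_widom_asymptotics}.

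I expect the main obstacle to be conceptual rather than computational: one must notice that boundedness of $g_n$ together with $g_n(\infty)=1$ is by itself too weak, and that the point is the rate $M_n\to1$ — the coefficient bound $\sum_{k\ge1}|c_k^{(n)}|^2\le M_n^2-1$ is exactly the device turning this rate into locally uniform convergence. One may instead run this step via the Schwarz lemma applied to $g_n/M_n$ after pre-composing with the automorphism of $\D$ that sends $0$ to $M_n^{-1}$; the idea is the same, the bookkeeping slightly heavier.
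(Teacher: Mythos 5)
Your argument is correct, and it reaches \eqref{eq:faber_szego_widom_asymptotics} by a different device than the paper in the one step that actually requires work. Both proofs start identically: the Faber-polynomial estimate gives $1\le \Phi'(\infty)^n t_n(\sfE)\le 1+Cr^n$, and both study the auxiliary function $\varphi_n(z)=T_n^{\sfE}(z)\Phi'(\infty)^n\Phi(z)^{-n}$, analytic on $\Omega_\sfE$, equal to $1$ at infinity, and bounded by $1+Cr^n$ via the maximum principle. The paper then finishes softly: the $\varphi_n$ form a normal family by Montel's theorem, every locally uniform subsequential limit is an analytic function of modulus at most $1$ attaining the value $1$ at $\infty$, hence identically $1$ by the maximum modulus principle, so the whole sequence converges. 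You instead finish quantitatively: transplanting to $w=\Phi(z)$ and writing $h_n(w)=1+\sum_{k\ge1}c_k^{(n)}w^{-k}$, Parseval on $|w|=R$ followed by $R\downarrow1$ gives $\sum_{k\ge1}|c_k^{(n)}|^2\le M_n^2-1$, and Cauchy--Schwarz converts this into $|h_n(w)-1|\le (M_n^2-1)^{1/2}\bigl((1+\delta)^2-1\bigr)^{-1/2}$ for $|w|\ge1+\delta$. Each step checks out (the coefficient bound, the geometric tail sum, and the observation that closed subsets of $\Omega_\sfE$ in $\RS$ are compact and hence carried by $\Phi$ into $\{|w|\ge1+\delta\}$). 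What your route buys is an explicit rate: since $M_n^2-1=O(r^n)$, you get $\varphi_n-1=O(r^{n/2})$ uniformly on each set $\{|\Phi|\ge 1+\delta\}$, which the normal-families argument cannot produce; what it costs is the slightly heavier bookkeeping and the need to pass to the $w$-coordinate, whereas the paper's compactness argument is shorter and transfers verbatim to situations where only $M_n\to1$ (without a geometric rate) is known.
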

	\begin{proof}
		We are left to prove that the left hand side of \eqref{eq:faber_szego_widom_asymptotics} converges locally uniformly to $1$. For this purpose, note that the functions
		\[\varphi_n(z) = T_n^{\sfE}(z)\Phi'(\infty)^n\Phi(z)^{-n}\]
		are analytic on $\Omega_\sfE$ and attain the value $1$ at infinity. Since we further have that $\|\varphi_n\|_{\sfE}\leq 1+Cr^n$ for some $C>0$ and $0<r<1$, Montel's theorem (see e.g. \cite[Theorem VII.2.9]{conway78}) implies that $\{\varphi_n\}$ is a normal family in $\Omega_\sfE$. Since any convergent subsequences of $\{\varphi_n\}$ must converge to the constant function $1$ locally uniformly on $\Omega_\sfE$ we conclude that so does the full sequence.
	\end{proof}
	
The assumption in Theorem \ref{thm:faber_asymptotics}, that the bounding curve is analytic can be weakened while still maintaining the asymptotic saturation in \eqref{eq:faber_widom_asymptotics}. This requires significantly more work and was first investigated by Suetin in \cite{suetin64}, see also \cite{suetin74, suetin84}. We will follow the approach used in a recent preprint \cite{minadiaz-rubin-wennman25} which combines ideas of Pommerenke from \cite{pommerenke64, pommerenke65} with those of Pritsker \cite{pritsker02}. Our aim is to provide a new proof of the following fundamental result in the field of extremal polynomials.

\begin{theorem}[Suetin (1964) \cite{suetin64}]
	Let \(\sfE\) denote a Jordan curve of class \(C^{1+\alpha}\) where \(0<\alpha <1\). Then,
	\[F_n^{\sfE}(z) = \Phi(z)^n+O\left(\frac{\log n}{n^\alpha}\right),\quad z\in \overline \Omega_{\sfE}.\]
	\label{thm:suetin}
\end{theorem}
\vspace{-.5cm}
That \(\sfE\) is a Jordan curve of class \(C^{1+\alpha}\) means that any arc-length parametrization of \(\sfE\) is continuously differentiable, and the derivative satisfies an \(\alpha\)-H\"{o}lder condition. By the Kellogg-Warschawski theorem, see e.g. \cite[Theorem 4.3]{garnett-marschall05} this is the same as saying that the associated Riemann map \(\Phi:\overline \Omega_\sfE\to \overline{\C}\setminus\D\) is of class \(C^{1+\alpha}\). 

The implications to the study of Chebyshev polynomials is that Theorem \ref{thm:faber_asymptotics} holds under the weaker assumption that the boundary is of class \(C^{1+\alpha}\). Indeed, since \(|\Phi(z)| = 1\) if \(z\in \sfE\), \eqref{eq:faber_upper_bnd_cheb} implies that
\[1\leq t_n(\sfE)\Phi'(\infty)^n\leq \|F_n^{\sfE}\|_{\sfE}=1+O\left(\frac{\log n}{n^\alpha}\right),\]
providing the estimate
\[t_n(\sfE)\Phi'(\infty)^n = 1+O\left(\frac{\log n}{n^\alpha}\right),\quad n\to \infty.\]

We will prove the equivalent formulation of Theorem \ref{thm:suetin} that
\[F_n^{\sfE}(\Psi(e^{i\theta}))-e^{in\theta} = O\left(\frac{\log n}{n^\alpha}\right),\quad n\to \infty.\]
Our approach is based on an integral representation due to Pommerenke of the Faber polynomials whose proof we now recount. If \(g\) is analytic in \(\overline \C\setminus \overline \D\), extending continuously to~\(|z| = 1\) with \(\Re g(\infty) = 0\), then
\begin{equation}
	g(s) = i\int_{0}^{2\pi}\frac{s+e^{it}}{s-e^{it}}\Im g(e^{it})\frac{dt}{2\pi},\quad |z|>1.
	\label{eq:poisson_representation}
\end{equation}
Let us fix \(\theta \in [0,2\pi)\) and recall that \(\Psi = \Phi^{-1}\). By appropriately choosing a branch of the logarithm we can ensure that
\[g(s) = \log \left(\frac{\Psi(s)-\Psi(e^{i\theta})}{\Psi'(\infty)s}\right)\]
extends analytically at \(\infty\), where it also vanishes. Consequently, if \(|s|>R>1\), then by applying \eqref{eq:poisson_representation} to \(g_R(sR^{-1})\) where \(g_R(s) = g(Rs)\) we obtain
\begin{align*}
\log \left(\frac{\Psi(s)-\Psi(e^{i\theta})}{\Psi'(\infty)s}\right) = i\int_{0}^{2\pi}\frac{sR^{-1}+e^{it}}{sR^{-1}-e^{it}}\Bigl(\arg \left(\Psi(Re^{it})-\Psi(e^{i\theta})\right)-t\Bigr)\frac{dt}{2\pi}.
\end{align*}
Following \cite[Lemma 1]{pommerenke65}, we denote
\begin{equation}
	v_\theta(t) = \arg \bigl(\Psi(e^{it})-\Psi(e^{i\theta})\bigr),\quad t\neq \theta \mod 2\pi
	\label{eq:def_v_theta}
\end{equation}
and observe \(\arg \left(\Psi(Re^{it})-\Psi(e^{i\theta})\right)\rightarrow v_\theta(t)\) as \(R\to 1\). Since \(\sfE\) is smooth, the function~\(z\mapsto \arg\left(z-\Psi(e^{i\theta})\right)\) is bounded on \(\Omega_{\sfE}\) and therefore we can use the \emph{Dominated convergence theorem}, when passing \(R\to 1\), to obtain
\begin{align}
\begin{split}
\log \left(\frac{\Psi(s)-\Psi(e^{i\theta})}{\Psi'(\infty)s}\right) &= i\int_{0}^{2\pi}\frac{1+e^{it}s^{-1}}{1-e^{it}s^{-1}}\bigl(v_\theta(t)-t\bigr)\frac{dt}{2\pi}\\&=i\int_{0}^{2\pi}\bigl(v_\theta(t)-t\bigr)\frac{dt}{2\pi}+\frac{i}{\pi}\int_{0}^{2\pi}\frac{e^{it}s^{-1}}{1-e^{it}s^{-1}}\bigl(v_\theta(t)-t\bigr)dt.
\label{eq:pommernke_intermediate_step}	
\end{split}
\end{align}
As \(s\to\infty\) both the left hand-side and the rightmost term of \eqref{eq:pommernke_intermediate_step} vanishes and consequently
\[\int_{0}^{2\pi}\bigl(v_\theta(t)-t\bigr)\frac{dt}{2\pi} = 0.\]
We have therefore shown that 
\begin{align*}
	\log \left(\frac{\Psi(s)-\Psi(e^{i\theta})}{\Psi'(\infty)s}\right)&=\frac{i}{\pi}\int_{0}^{2\pi}\frac{e^{it}s^{-1}}{1-e^{it}s^{-1}}\bigl(v_\theta(t)-t\bigr)dt \\&= \frac{-1}{\pi}\int_{0}^{2\pi}\frac{d}{dt}\Bigl(\log\left(1-e^{it}s^{-1}\right)\Bigr)\bigl(v_\theta(t)-t\bigr)dt.	
\end{align*}
As we will later see, \(v_\theta\) has bounded variation, and therefore we can apply partial integration, see e.g. \cite[Theorem 3.36]{folland99}, obtaining that
\[\log \left(\frac{\Psi(s)-\Psi(e^{i\theta})}{\Psi'(\infty)s}\right)=\frac{1}{\pi}\int_{0}^{2\pi}\log(1-e^{it}s^{-1})dv_\theta(t)-\int_{0}^{2\pi}\log(1-e^{it}s^{-1})dt.\]
The last term is equal to zero, see e.g. \cite[Exercise 2.2.2]{ransford95}, rendering the formula
\[\log \left(\frac{\Psi(s)-\Psi(e^{i\theta})}{\Psi'(\infty)s}\right)=\frac{1}{\pi}\int_{0}^{2\pi}\log(1-e^{it}s^{-1})dv_\theta(t).\]
Differentiating both sides with respect to \(s\) and multiplying both sides with the same variable we obtain
\[\frac{s\Psi'(s)}{\Psi(s)-\Psi(e^{i\theta})}-1 = \frac{1}{\pi}\int_{0}^{2\pi}\frac{-s^{-1}e^{it}}{1-e^{it}s^{-1}}dv_\theta(t) = \sum_{k=1}^{\infty}s^{-k}\frac{1}{\pi}\int_{0}^{2\pi}e^{itk}dv_\theta(t).\]
Evaluating \eqref{eq:faber_laurent_series} at \(z=\Psi(e^{i\theta})\) gives us
\[\frac{s\Psi'(s)}{\Psi(s)-\Psi(e^{i\theta})} = \sum_{k=0}^{\infty}F_k^{\sfE}(\Psi(e^{i\theta}))s^{-k} = 1+\sum_{k=1}^{\infty}s^{-k}\frac{1}{\pi}\int_{0}^{2\pi}e^{itk}dv_\theta(t).\]
By equating the coefficient in front of \(s^{-k}\) we have thus established
\begin{equation}
	F_n^{\sfE}(\Psi(e^{i\theta})) = \frac{1}{\pi}\int_{0}^{2\pi}e^{int}dv_\theta(t),\quad \theta\in [0,2\pi),
	\label{eq:faber_pommerenke_representation}
\end{equation}
see also \cite{pommerenke65}.

In order to proceed with proving Theorem \ref{thm:suetin} we will estimate the right-hand side of \eqref{eq:faber_pommerenke_representation}. The measure \(dv_\theta\) admits the following decomposition,
\begin{align*}
	\frac{1}{\pi}dv_\theta(t) & = \delta_\theta(t)+\frac{1}{\pi}\Im \left(\frac{d}{dt}\log\bigl(\Psi(e^{it})-\Psi(e^{i\theta})\bigr)\right)dt \\&= \delta_\theta(t)+\frac{1}{\pi}\Im\left(\frac{ie^{it}\Psi'(e^{it})}{\Psi(e^{it})-\Psi(e^{i\theta})}\right)dt.	
\end{align*}
Inserting this into \eqref{eq:faber_pommerenke_representation} we obtain
\begin{equation}
	F_n^{\sfE}(\Psi(e^{i\theta}))=e^{in\theta} +\int_{0}^{2\pi}e^{int}\frac{1}{\pi}\Im\left(\frac{ie^{it}\Psi'(e^{it})}{\Psi(e^{it})-\Psi(e^{i\theta})}\right)dt. 
	\label{eq:pommerenke_representation_of_faber}
\end{equation}
Denoting \(\kappa_\theta(t) = \Psi(e^{it})-\Psi(e^{i\theta})\) we may write
\[\frac{ie^{it}\Psi'(e^{it})}{\Psi(e^{it})-\Psi(e^{i\theta})} = \frac{\kappa_\theta'(t)}{\kappa_\theta(t)}.\]
As we already stated, the Kellogg-Warschawski theorem (see e.g. \cite[Theorem 4.3]{garnett-marschall05}) implies that \(t\mapsto\Psi(e^{it})\) is of class \(C^{1+\alpha}\), and as a consequence \(\kappa_\theta'\) satisfies a global \(\alpha\)-H\"{o}lder condition. In other words, there exists a constant \(C>0\), independent of \(\theta\), such that
\begin{equation}
	|\kappa_\theta'(s)-\kappa_\theta'(t)|\leq C|s-t|^\alpha.
	\label{eq:kappa_der_holder}
\end{equation} Therefore,
\begin{align}
	\left|\kappa_\theta(t) -(t-\theta)\kappa_\theta'(\theta)\right|&= \left|\int_{\theta}^{t}\bigl(\kappa_\theta'(s)-\kappa_\theta'(\theta)\bigr)ds\right|\leq \int_{\theta}^{t}C|s-\theta|^\alpha ds \notag\\&= \frac{C}{\alpha+1}(t-\theta)^{\alpha+1} = O(|t-\theta|^{\alpha+1})	,
	\label{eq:kappa_der_under_diff_holder}
\end{align}
where the bounding constant is independent of \(\theta\). Here we implicitly assumed that \(t>\theta\), but the same formula follows if \(t<\theta\). It is important in the forthcoming arguments that any occurrence of a ``big-O'' is \(\theta\)-independent. We will heavily rely on the fact that \(|\kappa_\theta'|\) is continuous and bounded away from \(0\).

We let \(\delta\in (0,\pi)\) be fixed for the moment. Due to the periodicity of \(\kappa_\theta\) and its derivative we observe that
\begin{align*}
	F_n^{\sfE}(\Psi(e^{i\theta}))-e^{in\theta} &= \frac{1}{\pi}\int_{0}^{2\pi}e^{int}\Im\left(\frac{\kappa_\theta'(t)}{\kappa_\theta(t)}\right)dt = \frac{1}{\pi}\int_{\theta-\delta}^{\theta+2\pi-\delta}e^{int}\Im\left(\frac{\kappa_\theta'(t)}{\kappa_\theta(t)}\right)dt\\&=\frac{1}{\pi}\int_{\theta-\delta}^{\theta+\delta}e^{int}\Im\left(\frac{\kappa_\theta'(t)}{\kappa_\theta(t)}\right)dt+\frac{1}{\pi}\int_{\theta+\delta}^{\theta+2\pi-\delta}e^{int}\Im\left(\frac{\kappa_\theta'(t)}{\kappa_\theta(t)}\right)dt.
\end{align*}
We will now demonstrate how \(\delta\to 0\) can be chosen so that the right-hand side is \(O(n^{-\alpha}\log n)\), with a uniform estimate independent of \(\theta\). To handle the integral near the singularity at \(\theta\), we first note that by \eqref{eq:kappa_der_under_diff_holder}
\begin{equation}
	\left|\Im \left(\frac{\kappa_\theta'(t)}{\kappa_\theta(t)}\right)\right| = \left|\Im\left(\frac{\kappa_\theta'(\theta)}{(t-\theta)\kappa_\theta'(\theta)}\Bigl(1+O(|t-\theta|^\alpha)\Bigr)\right)\right|=O(|t-\theta|^{\alpha-1}).
	\label{eq:holder_bnd_im_part}
\end{equation}
Therefore,
\[\left|\frac{1}{\pi}\int_{\theta-\delta}^{\theta+\delta}e^{int}\Im \left(\frac{\kappa_\theta'(t)}{\kappa_\theta(t)}\right)dt\right|\leq C_1\int_{0}^{\delta}u^{\alpha-1}du = \frac{C_1}{\alpha}\delta^\alpha,\]
for some constant \(C_1\). This shows that
\begin{equation}
	F_n^{\sfE}(\Psi(e^{i\theta}))-e^{in\theta} =O(\delta^\alpha)+\frac{1}{\pi}\int_{\theta+\delta}^{\theta+2\pi-\delta}e^{int}\Im\left(\frac{\kappa_\theta'(t)}{\kappa_\theta(t)}\right)dt.
	\label{eq:faber_first_bnd}
\end{equation}
Note that this step also verifies that \(t\mapsto v_\theta(t)\) is a function of bounded variation since we have shown that
\[\frac{d}{dt}v_\theta(t),\quad t\neq \theta\] 
is absolutely integrable.

To handle the behavior in the far-away region we employ the following result from Fourier analysis, see e.g. \cite[(2.3.7) p. 36]{edwards79}. 

\begin{lemma}
	Let \(f\) be a locally integrable, \(2\pi\)-periodic function on \(\R\). Then
\[\left|\int_{0}^{2\pi}e^{int}f(t)dt\right|\leq \frac{1}{2}\int_{0}^{2\pi}|f(t)-f(t+\tfrac{\pi}{n})|dt,\quad n\in \mathbb{N}.\]
\label{lem:edwards_fourier}
\end{lemma}
\begin{proof}[Proof of Theorem \ref{thm:suetin}]
To apply Lemma \ref{lem:edwards_fourier} in our setting, we first introduce the function \(h_\theta\) as the \(2\pi\)-periodic extension of 
\[\begin{cases}
	\Im\left(\frac{\kappa_\theta'(t)}{\kappa_\theta(t)}\right),& \theta+\delta\leq t \leq 2\pi+\theta-\delta \\
	0, & \text{otherwise in }[\theta,\theta+2\pi).
\end{cases}\]
We assume that \(\frac{\pi}{n}\leq \delta\) and define
\begin{align*}
	&I_1 = [\theta+\delta-\pi/n,\theta+\delta], 
	&I_2  = [\theta+\delta,\theta+\pi], \\
	&I_3  = [\theta+\pi,2\pi+\theta-\delta-\pi/n], 
	&I_4  = [2\pi+\theta-\delta-\pi/n, 2\pi +\theta-\delta].
\end{align*}
Lemma \ref{lem:edwards_fourier} implies that
\begin{align*}
	&\left|\frac{1}{\pi}\int_{\theta+\delta}^{2\pi+\theta-\delta}e^{int}\Im \left(\frac{\kappa_\theta'(t)}{\kappa_\theta(t)}\right)\right|dt = \left|\frac{1}{\pi}\int_{\theta}^{2\pi+\theta}e^{int}h_\theta(t)dt\right|	\leq \frac{1}{2\pi}\int_{\theta}^{2\pi+\theta}|h_\theta(t)-h_\theta(t+\tfrac{\pi}{n})|dt\\&\leq \int_{I_1}|h_\theta(t+\tfrac{\pi}{n})|dt+\int_{I_2}|h_\theta(t)-h_\theta(t+\tfrac{\pi}{n})|dt+\int_{I_3}|h_\theta(t)-h_\theta(t+\tfrac{\pi}{n})|dt+\int_{I_4}|h_\theta(t)|dt.
\end{align*}
We will only consider the first two integrals since the remaining two can be handled by completely analogous arguments. The only difference is that the point \(\theta\) is replaced by~\(2\pi +\theta\). Since \(\kappa_\theta(t)\) is \(2\pi\)-periodic this has no effect on the outcome.

First of all, to handle the integration over \(I_1\) (and with a slight modification also \(I_4\)) we note that by \eqref{eq:holder_bnd_im_part} there is some constant \(C_2>0\) independent of \(\theta\) such that
\begin{equation}
	\int_{I_1}|h_\theta(t+\tfrac{\pi}{n})|dt=\frac{1}{\pi}\int_{\theta+\delta}^{\theta+\delta+\pi/n}\left|\Im \left(\frac{\kappa_\theta'(t)}{\kappa_\theta(t)}\right)\right|dt\leq C_2\int_{\delta}^{\delta+\pi/n}u^{\alpha-1}du\leq \frac{C_2\pi}{n}\delta^{\alpha-1}.
	\label{eq:far_away_bnd_0}
\end{equation}
In the final inequality we used the fact that \(u\mapsto u^{\alpha-1}\) is decreasing for \(u>0\), since \(0<\alpha<1\). Secondly, for \(t\in I_2\)
\begin{align*}
	h_\theta(t)-h_\theta(t+\tfrac{\pi}{n})&=\Im\left(\frac{\kappa_\theta'(t)}{\kappa_\theta(t)}-\frac{\kappa_\theta'(t+\tfrac\pi n)}{\kappa_\theta(t+\tfrac \pi n)}\right)\\& = \Im\left(\frac{\kappa_\theta'(t)-\kappa_\theta'(t+\tfrac\pi n)}{\kappa_\theta(t)}\right)-\Im\left(\kappa_\theta'(t+\tfrac\pi n)\left(\frac{1}{\kappa_\theta(t)}-\frac{1}{\kappa_\theta(t+\tfrac\pi n)}\right)\right).	
\end{align*}
We obtain that
\begin{align}
	\begin{split}
		\frac{1}{\pi}\int_{I_2}|h_\theta(t)-h_\theta(t+\tfrac\pi n)|dt\leq \frac{1}{\pi}\int_{I_2}&\left|\Im\left(\frac{\kappa_\theta'(t)-\kappa_\theta'(t+\tfrac\pi n)}{\kappa_\theta(t)}\right)\right|\\+&\left|\Im\left(\kappa_\theta'(t+\tfrac\pi n)\left(\frac{1}{\kappa_\theta(t)}-\frac{1}{\kappa_\theta(t+\tfrac\pi n)}\right)\right)\right|dt.	
	\end{split}
	\label{eq:new_eq}
\end{align}
From \eqref{eq:kappa_der_holder} and \eqref{eq:kappa_der_under_diff_holder} we gather that
\[|\kappa_\theta'(t)-\kappa_\theta'(t+\tfrac\pi n)|\leq O(n^{-\alpha}),\quad \text{and}\quad \kappa_\theta(t) = (t-\theta)\kappa_\theta'(t)\Bigl(1+O(|t-\theta|^{\alpha})\Bigr)\]
and therefore
\[\left|\Im\left(\frac{\kappa_\theta'(t)-\kappa_\theta'(t+\tfrac\pi n)}{\kappa_\theta(t)}\right)\right|\leq \frac{C_3}{n^\alpha}\frac{1}{|t-\theta|}.\]
This has the effect that
\begin{equation}
	\frac{1}{\pi}\int_{\theta+\delta}^{\theta+\pi}\left|\Im\left(\frac{\kappa_\theta'(t)-\kappa_\theta'(t+\tfrac\pi n)}{\kappa_\theta(t)}\right)\right|dt\leq \frac{C_3}{\pi n^\alpha}\log\frac{\pi}{\delta}.
	\label{eq:far_away_bnd_1}
\end{equation}
We see that if \(\delta = \pi /n\) then this term is of the form \(O(n^{-\alpha}\log n)\). All we have left to estimate is 
\[\frac{1}{\pi}\int_{\theta+\delta}^{\theta+\pi}\left|\Im\left(\kappa_\theta'(t+\tfrac\pi n)\left(\frac{1}{\kappa_\theta(t)}-\frac{1}{\kappa_\theta(t+\tfrac\pi n)}\right)\right)\right|dt.\]
We collect the following estimates, valid uniformly for $t\in I_2$:
\begin{equation}\label{eq:kappa_expansions}
\begin{aligned}
\kappa_\theta(t+\tfrac{\pi}{n})-\kappa_\theta(t)
&= \tfrac{\pi}{n}\kappa_\theta'(t)+O(n^{-1-\alpha}),\\[0.3em]
\frac{1}{\kappa_\theta(t)}
&= \frac{1}{(t-\theta)\kappa_\theta'(\theta)}
   \bigl(1+O(|t-\theta|^\alpha)\bigr),\\[0.3em]
\frac{1}{\kappa_\theta(t+\tfrac{\pi}{n})}
&= \frac{1}{(t+\tfrac{\pi}{n}-\theta)\kappa_\theta'(\theta)}
   \Bigl(1+n^{-1}O(|t-\theta|^{\alpha-1})\Bigr),\\[0.3em]
\kappa_\theta'(t)^2
&= \kappa_\theta'(\theta)^2+O(|t-\theta|^\alpha).
\end{aligned}
\end{equation}
Using \eqref{eq:kappa_expansions}, we write
\begin{align}
&\Im\!\left(
  \kappa_\theta'(t+\tfrac{\pi}{n})
  \Bigl(\frac{1}{\kappa_\theta(t)}-\frac{1}{\kappa_\theta(t+\tfrac{\pi}{n})}\Bigr)
\right) \notag\\
&\quad=
\Im\!\left(
  \kappa_\theta'(t+\tfrac{\pi}{n})
  \frac{\kappa_\theta(t+\tfrac{\pi}{n})-\kappa_\theta(t)}
       {\kappa_\theta(t)\kappa_\theta(t+\tfrac{\pi}{n})}
\right).
\end{align}
Substituting the expansions in \eqref{eq:kappa_expansions} yields
\begin{align}
&\Im\!\left(
  \kappa_\theta'(t+\tfrac{\pi}{n})
  \Bigl(\frac{1}{\kappa_\theta(t)}-\frac{1}{\kappa_\theta(t+\tfrac{\pi}{n})}\Bigr)
\right) \notag\\
&\quad=
\frac{\pi}{n}
\Im\!\left(
\frac{
\kappa_\theta'(\theta)^2
+O(|t-\theta|^\alpha)+O(n^{-\alpha})
}{
(t-\theta)(t+\tfrac{\pi}{n}-\theta)\kappa_\theta'(\theta)^2
}
\right)
\Bigl(1+n^{-1}O(|t-\theta|^{\alpha-1})+O(|t-\theta|^\alpha)\Bigr) \notag\\
&\quad=
\frac{
O\!\left(|t-\theta|^\alpha
      +n^{-1}|t-\theta|^{\alpha-1}
      +n^{-\alpha}\right)
}{
n(t-\theta)(t+\tfrac{\pi}{n}-\theta)
}.
\label{eq:kernel_split_estimate}
\end{align}
Integrating over $I_2$ we obtain
\begin{align}
\int_{I_2}
\frac{|t-\theta|^{\alpha-1}}
     {(t-\theta)(t+\tfrac{\pi}{n}-\theta)}\,dt
&\le \int_{\delta}^{\pi} u^{\alpha-3}\,du
 \le \frac{1}{2-\alpha}\,\delta^{\alpha-2}, \notag\\[0.3em]
\int_{I_2}
\frac{|t-\theta|^{\alpha}}
     {(t-\theta)(t+\tfrac{\pi}{n}-\theta)}\,dt
&\le \int_{\delta}^{\pi} u^{\alpha-2}\,du
 \le \frac{1}{1-\alpha}\,\delta^{\alpha-1}, \notag\\[0.3em]
\int_{I_2}
\frac{1}
     {(t-\theta)(t+\tfrac{\pi}{n}-\theta)}\,dt
&\le \frac{n}{\pi}
 \log\!\left(1+\frac{\pi}{n\delta}\right).
\end{align}
Combining these bounds with \eqref{eq:kernel_split_estimate}, we find that
there exists a constant $C_4>0$, independent of $\theta$, such that
\begin{equation}
\int_{I_2}
\left|
\Im\!\left(
\kappa_\theta'(t+\tfrac{\pi}{n})
\Bigl(\frac{1}{\kappa_\theta(t)}-\frac{1}{\kappa_\theta(t+\tfrac{\pi}{n})}\Bigr)
\right)
\right|dt
\le
C_4\!\left(
\frac{\delta^\alpha}{n\delta}
+\frac{\delta^\alpha}{(n\delta)^2}
+\frac{\log\!\left(1+\frac{\pi}{n\delta}\right)}{n^\alpha}
\right).
\label{eq:far_away_bnd_2}
\end{equation}
Choosing $\delta=\pi/n$, and combining
\eqref{eq:new_eq}, \eqref{eq:far_away_bnd_1} and \eqref{eq:far_away_bnd_2}, we obtain
\begin{equation}
\frac{1}{\pi}\int_{I_2}|h_\theta(t)-h_\theta(t+\tfrac\pi n)|dt
=O(n^{-\alpha}\log n).
\label{eq:far_away_bnd_full}
\end{equation}
The same argument applies verbatim to the integration over $I_3$.
Combining \eqref{eq:faber_first_bnd}, \eqref{eq:far_away_bnd_0},
and \eqref{eq:far_away_bnd_full}, with $\delta=\pi/n$, completes the proof
of Theorem~\ref{thm:suetin}.
\end{proof}

	
Faber's construction was extended to analyze orthogonal and Chebyshev polynomials relative to unions of Jordan curves and arcs of class \(C^{2+\alpha}\) in a very influential paper from~1969, see~\cite{widom69}. 
A fundamental difficulty in this setting is that there is no single-valued exterior conformal map onto the exterior of the unit disk. 
Even after introducing a suitable multivalued analytic analogue of the conformal map, the lower bound \eqref{eq:conformal_map_chebyshev_lower_bound} must be reformulated appropriately.
Widom constructs a multivalued analytic function on the unbounded complement \(\Omega_{\sfE}\) that solves a certain extremal problem; we denote this function by \(f_n\), see e.g. \cite{christiansen-simon-zinchenko22}. 
With this function he establishes the improved lower bound
\[
\|f_n\|_{\sfE}\,\Phi'(\infty)^{-n} \le t_n(\sfE).
\]
More importantly, Widom constructs a family of modified Faber polynomials \(Q_n\) satisfying
\begin{equation}
	t_n(\sfE) \le \|Q_n\|_{\sfE} 
	\sim 
	\|f_n\|_{\sfE}\,\Phi'(\infty)^{-n},
	\qquad n\to\infty,
	\label{eq:widom_convergence_equation}
\end{equation}
see \cite[Theorem~8.3]{widom69}. 
As a consequence, he obtains the asymptotic relation
\[
\Phi'(\infty)^{n} t_n(\sfE) 
\sim 
\|f_n\|_{\sfE},
\qquad n\to\infty.
\]

	
	This result also extends to the case of weight functions on the boundary. In order to describe one way to handle weights we limit ourselves to the case where the set has one component. The reason for this limitation is to avoid having to deal with multivalued analytic functions. Assume that $\sfE$ is a Jordan curve, and $w:\sfE\rightarrow (0,\infty)$ is a (positive) continuous function. The Dirichlet problem on the unbounded complement $\Omega_{\sfE}$ with boundary data~$\log w$ on~$\sfE$ has a unique solution, see e.g. \cite[Corollary 4.1.8]{ransford95}. We conclude that there exists a harmonic function $\omega$ on $\Omega_{\sfE}$ such that $\omega(z)\rightarrow \log w(\zeta)$ as $z\rightarrow \zeta\in \sfE$. Let $\tilde{\omega}$ denote the harmonic conjugate which vanishes at infinity. Then the \emph{Szeg\H{o} function}
	\begin{equation}
		W(z) = \exp\Big(\omega(z)+i\tilde\omega(z)\Big)
		\label{eq:widom_R_function}
	\end{equation}
	satisfies $W(\infty)>0$ and $|W(z)| = w(z)$ on $\sfE$. If $\Phi:\Omega_{\sfE}\rightarrow \overline{\C}\setminus \overline{\D}$ denotes the exterior conformal map and $Q$ is any monic polynomial of degree $n$ then by the maximum principle
	\[\max_{\zeta\in  \sfE}w(\zeta)\left|Q(\zeta)\right| = \max_{\zeta\in \sfE}\left|\frac{Q(\zeta)W(\zeta)}{\Phi(\zeta)^n}\right|\geq\lim_{z\rightarrow \infty}\left|\frac{Q(z)W(z)}{\Phi(z)^n}\right| = \Phi'(\infty)^{-n}W(\infty).  \]
	Therefore \begin{equation}t_n(\sfE,w)\Phi'(\infty)^{n}W(\infty)^{-1}\geq 1.\label{eq:szego_weight_lb}\end{equation} On the other hand Widom, in \cite[Theorem 8.3]{widom69}, shows the following. Note that we have restricted ourselves to the (much simpler) case of a single Jordan curve.
	
	\begin{theorem}[Widom (1969) \cite{widom69}]
		Let $\sfE$ be a Jordan curve of class $C^{1+\alpha}$ for some \(0<\alpha <1\) and let $w:\sfE\rightarrow [0,\infty)$ be an upper semicontinuous weight function such that
		\begin{equation}
			\int_{ \sfE}\log w(z)|dz|>-\infty.
			\label{eq:szego_condition}
		\end{equation}
		Then as $n\rightarrow \infty$
		\begin{equation}
			t_n(\sfE,w) \Phi'(\infty)^{n}W(\infty)^{-1}\sim 1
			\label{eq:widom_weighted_asymptotics}
		\end{equation} 
		and
		\begin{equation}
			T_n^{\sfE,w}(z) = \Phi'(\infty)^{-n}\Phi(z)^nW(\infty)\Big(W(z)^{-1}+o(1)\Big)
			\label{eq:szego_widom_weight}
		\end{equation}
		holds uniformly on closed subsets of $\Omega_{\sfE}$. 
		\label{thm:widom_weight}
	\end{theorem}
	\begin{proof}
		Let us give a proof in the case where \(w:\sfE\to[0,\infty)\) is continuous and positive. In this case, the function \(W(z)\) defined in \eqref{eq:widom_R_function} is continuous on \(\overline \Omega_{\sfE}\). Let us begin by temporarily fixing \(R>1\), and defining the new weight function
		\[w_R(z) \coloneqq \left|W\bigl(g_R(\Phi(z))\bigr)\right|,\quad z\in \sfE,\qquad g_R(s) = \Psi(Rs).\]
		The associated Szeg\H{o} function of \(w_R\) defined analogously as in \eqref{eq:widom_R_function} is precisely \(W(g_R(\Phi(z)))\)
		and therefore \eqref{eq:szego_weight_lb} implies that
		\[t_n(\sfE,w_R)\Phi'(\infty)^n\geq W(\infty).\]
 		The Laurent expansion
		\[\frac{1}{W(g_R(s) )}=W(\infty)^{-1}+\sum_{k=1}^{\infty}a_{k}s^{-k}\]
		converges uniformly on closed subsets of \(|s|>R^{-1}.\) Let \[S_m(s) \coloneqq W(\infty)^{-1}+\sum_{k=1}^{m}a_ks^{-k}.\] Given \(\epsilon>0\), we can choose \(m\) large enough so that
		\[\|1/w_R-S_m\circ \Phi\|_{\sfE} <\epsilon.\]
		If \(n>m\) then
		\[Q_{n,m}(z) \coloneqq W(\infty)^{-1}F_n^{\sfE}(z)+\sum_{k=1}^{m}a_kF_{n-k}^{\sfE}(z)\]
		has leading coefficient \(W(\infty)^{-1}\Phi'(\infty)^n\), and furthermore, there exists a constant \(C>0\) such that if \(z\in \sfE\)
		\begin{align*}
		|Q_{n,m}(z)|& \leq \left|W(\infty)^{-1}\Phi(z)^n+\sum_{k=1}^{m}a_k \Phi(z)^{n-k}\right|+C\cdot \frac{\log n}{n^\alpha} \\
		& \leq \left|S_m(\Phi(z))\right|+C\cdot \frac{\log n}{n^\alpha} \\
		& \leq \frac{1}{w_R(z)}+C\cdot \frac{\log n}{n^\alpha}+\epsilon.
		\end{align*}
		This implies that
		\begin{align*}
			\|w_RQ_{n,m}\|_{\sfE}&\leq 1+O\left(\frac{\log n}{n^\alpha}\right)+\epsilon\max_{z\in \sfE}|w_R(z)|.
		\end{align*}
		From the fact that
		\[W(\infty)^{-1}\Phi'(\infty)^nt_n(\sfE,w_R)\leq \|w_RQ_{n,m}\|_{\sfE}\]
		we obtain by letting \(n\to \infty\) that
		\begin{equation}
			\lim_{n\to\infty}W(\infty)^{-1}\Phi'(\infty)^nt_n(\sfE,w_R) = 1.
			\label{eq:widom_scaled_weight}
		\end{equation}
		This proves the theorem for the weight \(w_R\). To extend this to \(w\) we note that the function~\(W\) is continuous on \(|z|\geq 1\) and therefore 
		\[w_R\to w\] uniformly on \(\sfE\). Let again \(0<\epsilon<1\) be given. We can choose \(\delta>0\) small enough so that if~\(1<R<1+\delta\), then
		\[w_R(z)(1-\epsilon)\leq w(z)\leq w_R(z)(1+\epsilon),\quad z\in \sfE.\]
		But this has the effect that
		\begin{align*}
			t_n(\sfE,w_R)\leq \|w_RT_n^{\sfE,w}\|_{\sfE}\leq (1-\epsilon)t_n(\sfE,w)\leq (1-\epsilon)\|wT_n^{\sfE,w_R}\|_{\sfE}\leq (1-\epsilon^2)t_n(\sfE,w_R).
		\end{align*}
		Multiplying both sides by \(W(\infty)^{-1}\Phi'(\infty)^n/(1-\epsilon)\) and using \eqref{eq:widom_scaled_weight} we obtain that
		\begin{align*}
			(1-\epsilon)^{-1} &=(1-\epsilon)^{-1}\lim_{n\to\infty}W(\infty)^{-1}\Phi'(\infty)^nt_n(\sfE,w_R)\\&\leq \liminf_{n\to \infty}W(\infty)^{-1}\Phi'(\infty)^nt_n(\sfE,w) \\
			& \leq \limsup_{n\to \infty}W(\infty)^{-1}\Phi'(\infty)^nt_n(\sfE,w)\\&\leq (1+\epsilon)\lim_{n\to\infty}W(\infty)^{-1}\Phi'(\infty)^nt_n(\sfE,w_R) = 1+\epsilon.
		\end{align*}
		Since \(0<\epsilon<1\) was arbitrary this proves \eqref{eq:widom_weighted_asymptotics} for positive, continuous weight functions. 
		
		The result can now be lifted to the case of an upper semicontinuous weight function satisfying \eqref{eq:szego_condition} using standard arguments, see e.g. \cite{widom69}.
		
		To prove \eqref{eq:szego_widom_weight} we can argue analogously as in Theorem \ref{thm:faber_asymptotics}.
	\end{proof}

	Let us stress the fact that Widoms considerations are much deeper, allowing for several components of \(\sfE\). 
		
	A natural question is how this picture changes when the boundary regularity is relaxed.
If$\sfE$ is a piecewise smooth Jordan curve and $\Psi$ denotes the exterior conformal map,
then the function
\[
v_\theta(t)\coloneqq \lim_{R\to 1}\arg\bigl(\Psi(Re^{it})-\Psi(e^{i\theta})\bigr),
\qquad t\neq \theta \pmod{2\pi},
\]
defined in \eqref{eq:def_v_theta} exhibits a jump discontinuity at $t=\theta$.
The magnitude of this jump is equal to the exterior angle of $\sfE$ at the point
$\Psi(e^{i\theta})$.
	
	As shown by Pommerenke in \cite{pommerenke64}, if \(\sfE\) is a piecewise smooth Jordan curve with convex interior, then
	\[\lim_{n\to\infty}\max_{z\in \sfE}|F_n^{\sfE}(z)| = \text{the largest exterior angle of \(\sfE\)}.\]
	This implies that it is easy to come upp with examples of sets \(\sfE\) where
	\[\lim_{n\to\infty}\max_{z\in \sfE}|F_n^{\sfE}(z)|>1,\]
	for instance a square would have this property. In general, if $\sfE$ is a convex compact set then~\cite[Theorem 2]{kovari-pommerenke67} implies that
	\begin{equation}
		|F_n^{\sfE}\circ \Phi^{-1}(w)-w^n|\leq 1
		\label{eq:faber_kovari_pommerenke_convex}
	\end{equation}
	if $|w|\geq 1$. Furthermore this inequality is strict unless \(\sfE\) is a straight line segment. From this we may conclude the following.
	\begin{theorem}[K\"{o}vari \& Pommerenke (1967) \cite{kovari-pommerenke67}]
		Let $\sfE$ denote a compact convex set then
		\[\Phi'(\infty)^nt_n(\sfE)\leq 2.\]
		\label{thm:convex_set_norm_bounds}
		Furthermore, the inequality is strict unless \(\sfE\) is a straight line segment.
	\end{theorem}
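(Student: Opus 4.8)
The plan is to use the Faber polynomial $F_n^{\sfE}$ as a competitor in the extremal problem defining $t_n(\sfE)$. Since $F_n^{\sfE}$ is a monic polynomial of degree $n$, the defining minimality of the Chebyshev polynomial gives $t_n(\sfE)=\|T_n^{\sfE}\|_{\sfE}\leq \|F_n^{\sfE}\|_{\sfE}$, so it suffices to establish the bound $\|F_n^{\sfE}\|_{\sfE}\leq 2\,\Phi'(\infty)^{-n}$. Note that for a compact convex set of positive capacity the complement $\Omega_\sfE$ is simply connected, so the exterior map $\Phi$ of \eqref{eq:conformal_map_laurent_expansion} and hence $F_n^{\sfE}$ are well defined; and $\partial\sfE=\partial\Omega_\sfE$ is a Jordan curve (or, in the degenerate case, a segment), so by the maximum modulus principle applied to the polynomial $F_n^{\sfE}$ one has $\|F_n^{\sfE}\|_{\sfE}=\max_{z\in\partial\sfE}|F_n^{\sfE}(z)|$.

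Next I would invoke the K\"ov\'ari--Pommerenke estimate \eqref{eq:faber_kovari_pommerenke_convex}, valid for convex $\sfE$ and all $|w|\geq 1$, and specialize it to $|w|=1$, i.e.\ to the unit circle $\bbT$. For such $w$ we have $|w^n|=1$, whence
\[
\left|\Phi'(\infty)^n F_n^{\sfE}\bigl(\Phi^{-1}(w)\bigr)\right|\leq |w^n|+1=2 .
\]
Since $\Phi$ extends to a homeomorphism of $\overline{\Omega}_\sfE$ onto $\RS\setminus\D$ carrying $\partial\sfE$ onto $\bbT$, as $w$ runs over $\bbT$ the point $z=\Phi^{-1}(w)$ runs over $\partial\sfE$, so $\max_{z\in\partial\sfE}|F_n^{\sfE}(z)|\leq 2\,\Phi'(\infty)^{-n}$. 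Combining this with the maximum modulus reduction above and with $t_n(\sfE)\leq\|F_n^{\sfE}\|_{\sfE}$ yields $\Phi'(\infty)^n t_n(\sfE)\leq \Phi'(\infty)^n\|F_n^{\sfE}\|_{\sfE}\leq 2$, as desired.

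There is no genuine obstacle once \eqref{eq:faber_kovari_pommerenke_convex} is granted; the only point requiring a little care is the boundary behaviour of $\Phi$ and the maximum modulus step, in particular the degenerate case where $\sfE$ is a line segment, in which $\Phi^{-1}$ should be read via its continuous extension to the two sides of the slit (the estimate and the conclusion are unaffected). It is worth emphasizing the structural parallel with the lower bound: the map $\Phi^n$ produced the universal lower bound $\Phi'(\infty)^{-n}t_n(\sfE)\geq 1$ in \eqref{eq:conformal_map_chebyshev_lower_bound}, and here the Faber polynomial — which for convex $\sfE$ approximates $\Phi'(\infty)^{-n}\Phi^n$ with error at most $\Phi'(\infty)^{-n}$ on $\overline{\Omega}_\sfE$ — supplies the matching upper bound, the factor $2$ being exactly the loss incurred by this approximation on $\partial\sfE$.
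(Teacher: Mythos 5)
Your argument is correct and is essentially the paper's proof: both use the Faber polynomial $F_n^{\sfE}$ as a competitor together with the K\"ovari--Pommerenke estimate \eqref{eq:faber_kovari_pommerenke_convex}. The only (cosmetic) difference is that the paper applies the estimate on the level curves $\sfE(1+\varepsilon)$ and lets $\varepsilon\rightarrow 0$, which sidesteps the boundary-extension and slit-case issues you handle directly at $|w|=1$.
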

	\begin{proof}
If \(\sfE\) is not a straight-line segment, then \(\partial \sfE\) is a Jordan curve. In that case, Carath\'{e}odory's theorem implies that \(\Phi\) extends to a homeomorphism from \(\partial \sfE\) to \(\bbT\). Together with \eqref{eq:faber_kovari_pommerenke_convex} this implies that 
	\[t_n(\sfE)\Phi'(\infty)^n\leq \max_{z\in \sfE}|F_n^{\sfE}(z)|\leq \max_{z\in \sfE}|F_n^{\sfE}(z)-\Phi(z)^n|+\max_{z\in \sfE}|\Phi(z)|^n< 2.\]
	If \(\sfE\) is a straight line segment, then we already know from \eqref{eq:straight_line_widom} that the theorem holds.
	\end{proof}
	By combining results from \cite{pritsker99, pritsker02} we further see that if \(\sfE\) is a piecewise Dini-smooth Jordan curve, then
	\begin{equation}
		\text{the largest exterior angle of \(\sfE\)}\leq \liminf_{n \to \infty}\max_{z\in \sfE}|F_n^{\sfE}(z)|\leq \limsup_{n\to \infty}\max_{z\in \sfE}|F_n^{\sfE}(z)|\leq 2.
		\label{eq:pritsker_inequality}
	\end{equation}
	A curve is Dini-smooth if any arc-length parametrization is continuously differentiable, having a modulus of continuity \(\omega(t)\) such that \(\omega(t)/t\) is integrable around the origin. In particular, any \(C^{1+\alpha}\) curve is also Dini-smooth. From \eqref{eq:pritsker_inequality} we gather that if \(\sfE\) is a piecewise Dini-smooth Jordan curve, then
	\[\limsup_{n\to\infty}t_n(\sfE)\Phi'(\infty)^n\leq 2,\]
	however, this bound is not sharp. Instead the following holds.
	
	\begin{theorem}[\cite{minadiaz-rubin-wennman25}]
		Let \(\sfE\) denote a piecewise Dini-smooth Jordan curve, then
		\[\lim_{n\to\infty}t_n(\sfE)\Phi'(\infty)^n=1.\]
		\label{thm:wennman}
	\end{theorem}
	\vspace{-.5cm}
	Since the Faber polynomials prove insufficient in saturating the lower bound in this case the proof instead relies on summing together Faber polynomials of different degrees, essentially weighted Faber polynomials.

It is natural to ask which regularity assumptions on a Jordan curve \(\sfE\) are sufficient to ensure that \eqref{eq:conformal_map_chebyshev_lower_bound} is asymptotically sharp, in the sense that
\[
\lim_{n\to\infty} t_n(\sfE)\Phi'(\infty)^n = 1.
\]
That some degree of boundary regularity is indeed necessary follows from the existence of quadratic Julia sets \(\sfE\) which are Jordan curves but for which
\[
\limsup_{n\to\infty} t_n(\sfE)\Phi'(\infty)^n > 1.
\]
This phenomenon can be verified by explicitly identifying the Chebyshev polynomials along a suitable subsequence of degrees; see, for example, \cite{barnsley-geronimo-harrington83, kamo-borodin94, stawiska96}.

It appears to remain open whether the stronger condition
\[
\liminf_{n\to\infty} t_n(\sfE)\Phi'(\infty)^n > 1
\]
holds for these examples, or whether there exists a Jordan curve for which this strict inequality is valid.

\subsection{Chebyshev polynomials relative to a Jordan arc}
\label{subs:cheb_arc}
After establishing what can be proved for unions of Jordan curves of class
$C^{2+\alpha}$ and their associated weights, Widom turned to the subtler case
of Jordan arcs. We describe what is known for Chebyshev polynomials corresponding to a single arc, whereas Widom's goal was to describe Chebyshev polynomials corresponding to a union of several disjoint arcs.

Let $\sfE$ be a Jordan arc of class $C^{2+\alpha}$, $0<\alpha<1$, with complement
$\Omega_{\sfE}$, and let
$\Phi:\Omega_{\sfE}\to\RS\setminus\overline{\D}$
be the exterior conformal map normalized by
\eqref{eq:canonical_conformal_map}.
For $\zeta\in\sfE$, the non-tangential limit
\[
\lim_{\substack{z\to\zeta\\ z\in\Omega_{\sfE}}}\Phi(z)
\]
depends on the side of the arc from which $z$ approaches $\zeta$.
We denote the two boundary values by $\Phi_+$ and $\Phi_-$.
In \cite{widom69}, Widom proved that the Faber polynomials satisfy
\begin{equation}
	F_n^{\sfE}(z)
	=
	\Phi_+(z)^n+\Phi_-(z)^n+o(1),
	\qquad n\to\infty,\quad z\in\sfE.
	\label{eq:faber_arc_asymptotics}
\end{equation}
Since $\Phi_\pm(z)\in\bbT$ for $z\in\sfE$, we obtain that
\[
\limsup_{n\to\infty}\|F_n^{\sfE}\|_{\sfE}\leq 2.
\]
Together with \eqref{eq:faber_upper_bnd_cheb}, this yields
\begin{equation}
	\limsup_{n\to\infty} t_n(\sfE)\,\Phi'(\infty)^n \le 2.
	\label{eq:chebyshev_upper_bnd_arc}
\end{equation}

If $\sfE=[-1,1]$, then the exterior conformal map is
\[
\Phi(z)=z+\sqrt{z^2-1},
\qquad
\Phi'(\infty)=2.
\]
Using \eqref{eq:chebyshev_first_kind}, we obtain
\begin{align*}
	\Phi(z)^n+\Phi(z)^{-n}
	&=
	\bigl(z+\sqrt{z^2-1}\bigr)^n
	+
	\bigl(z-\sqrt{z^2-1}\bigr)^n \\
	&=
	\sum_{k=0}^{n}\binom{n}{k}(1+(-1)^k)
	z^{n-k}(z^2-1)^{k/2} \\
	&=
	2\sum_{k=0}^{\lfloor n/2\rfloor}
	\binom{n}{2k}z^{n-2k}(z^2-1)^k
	=
	2^n T_n^{[-1,1]}(z).
\end{align*}
Since $F_n^{[-1,1]}$ coincides with the polynomial part of $\Phi(z)^n$, and
$\Phi(z)^{-n}=O(z^{-n})$ as $z\to\infty$, we gather that
\begin{equation}
	T_n^{[-1,1]}(x)
	=
	2^{-n}
	\left(
	\bigl(x+\sqrt{x^2-1}\bigr)^n
	+
	\bigl(x-\sqrt{x^2-1}\bigr)^n
	\right)
	=
	2^{-n}F_n^{[-1,1]}(x).
	\label{eq:faber_cheb_equal}
\end{equation}
Consequently,
\begin{equation}
	\Phi'(\infty)^n t_n([-1,1])
	=
	2
	=
	\|F_n^{[-1,1]}\|_{[-1,1]},
	\label{eq:straight_line_widom}
\end{equation}
and the bound \eqref{eq:chebyshev_upper_bnd_arc} is attained for an interval.

Motivated by this example, Widom conjectured that
\[
\lim_{n\to\infty} t_n(\sfE)\,\Phi'(\infty)^n = 2
\]
for every $C^{2+\alpha}$ Jordan arc $\sfE$.
More generally, he suggested that the asymptotic formula
\eqref{eq:widom_convergence_equation} should remain valid for arcs, provided
the right-hand side is multiplied by $2$.
This conjecture holds when $\sfE$ is a finite union of closed intervals,
see \cite[Theorem~11.5]{widom69}, but fails in general. A counterexample was given by Thiran and Detaille \cite{thiran-detaille91}.
Let
\[
\sfE(\alpha)
:=
\{z\in\C:|\arg z|\le\alpha\},
\qquad 0<\alpha<\pi,
\]
and let $\Phi$ be the associated exterior conformal map.
Then
\begin{equation}
	\lim_{n\to\infty}
	t_n(\sfE(\alpha))\,\Phi'(\infty)^n
	=
	2\cos^2(\alpha/4).
	\label{eq:thiran_detaille}
\end{equation}
Apart from straight line segments, this is the only case where the asymptotic
behavior of Chebyshev polynomials on a Jordan arc is fully understood.
For further results on circular arcs, see
\cite{eichinger17, schiefermayr19, schiefermayr-zinchenko21,
schiefermayr-zinchenko22}.
Connections with orthogonal polynomials were studied in
\cite{alpan-zinchenko20}, and the weighted analogue of
\eqref{eq:thiran_detaille} appears in
\cite{christiansen-eichinger-rubin-zinchenko26}, where residual polynomials are also described.
The asymptotical formula \eqref{eq:thiran_detaille} can be derived from Theorem~\ref{thm:achieser_disjoint_intervals} as is shown in
\cite{schiefermayr19}.

From \eqref{eq:thiran_detaille} we observe that any number in $(1,2]$ can arise as a limit point of
\[
t_n(\sfE)\,\Phi'(\infty)^n
\]
for $C^{2+\alpha}$ Jordan arcs.
Using weighted Faber polynomials and an analogue of
\eqref{eq:faber_arc_asymptotics}, Alpan \cite{alpan22} constructed a sequence
$\{Q_n\}$ with leading coefficient $\Phi'(\infty)^n$ such that
\[
\limsup_{n\to\infty}\|Q_n\|_{\sfE}<2,
\]
unless $\sfE$ satisfies the potential-theoretic \emph{S-property}.
For a single arc, this property holds only for straight line segments;
see \cite{christiansen-eichinger-rubin23} and
\cite{stahl85-1, stahl85-2, stahl12}.
Combining these observations with Totik's result \cite{totik14} yields the
following.

\begin{theorem}
	Let $\sfE$ be a $C^{2+\alpha}$ Jordan arc with exterior conformal map $\Phi$
	normalized by \eqref{eq:canonical_conformal_map}.
	Then
	\[
	1
	<
	\liminf_{n\to\infty} t_n(\sfE)\,\Phi'(\infty)^n
	\le
	\limsup_{n\to\infty} t_n(\sfE)\,\Phi'(\infty)^n
	\le
	2.
	\]
	The upper bound is strict unless $\sfE$ is a straight line segment.
	\label{thm:cheb_arc}
\end{theorem}

Thus Widom's conjecture fails for Jordan arcs except in the case of an
interval; see also~\cite{totik-yuditskii15}.
Moreover, Totik's theorem \cite{totik14} applies more generally to compact
sets whose outer boundary contains arc components.

The exact limiting behavior of $t_n(\sfE)\,\Phi'(\infty)^n$ is unknown in general.
A conjectural value proposed in \cite{christiansen-simon-zinchenko22}
agrees with the known limits for circular arcs and straight line segments.
	
\subsection{Potential theory and Widom factors}
\label{subseq:pot_theory}
We will now show that the lower bound \eqref{eq:conformal_map_chebyshev_lower_bound} extends to any compact set. This requires a generalization of the quantity $\Phi'(\infty)$ since the unbounded complement need not be simply connected. We will use several concepts from potential theory and refer the reader to \cite{ransford95} for details, see also \cite{ahlfors73, garnett-marschall05}.

Throughout this section let $\sfE$ denote a compact subset of $\C$ and recall the notation $\Omega_\sfE$ for the unbounded component of $\overline{\C}\setminus \sfE$ and $\cM(\sfE)$ for the space of probability measures which have support contained in $\sfE$. Given $\mu\in \cM(\sfE)$ we define the potential function $U^\mu$ via the formula
\[U^\mu(z) = \int_{\sfE}\log \frac{1}{|z-\zeta|}d\mu(\zeta)\] 
and the related energy functional as
\[\cE(\mu) = \int_{\sfE}U^\mu(z)d\mu(z).\]
A set $\sfE$ is called \emph{polar} if
\[
	\inf_{\mu\in \cM(\sfE)}\cE(\mu) = \infty.
\]
If a relation holds outside a polar set then it is said to hold \emph{quasi-everywhere} or q.e. for short. For any non-polar set, there exists a unique measure $\mu_{\sfE}\in \cM(\sfE)$ such that
\[\cE(\mu_{\sfE})=\inf_{\mu\in \cM(\sfE)}\cE(\mu),\]
see \cite[Theorems 3.3.2, 3.7.6]{ransford95}. This measure is called the \emph{equilibrium measure} relative to $\sfE$ and is supported on the outer boundary of $\sfE$. Two important examples are
\begin{equation}
	d\mu_{\overline\D} = \left.\frac{d\theta}{2\pi}\right\vert_{|z| = 1},\quad \text{and}\quad d\mu_{[-1,1]}(x) = \frac{1}{\pi}\frac{1}{\sqrt{1-x^2}}dx.
\label{eq:equilibrium_measure_representation}
\end{equation}
	
	\emph{The logarithmic capacity} of a set is defined through the formula
	\[\capacity(\sfE) = \sup_{\mu\in \cM(\sfE)} e^{-\cE(\mu)}.\]
	We see that sets of capacity \(0\) are precisely those that are polar. If $\sfE_1$ and $\sfE_2$ denote two compact sets with associated unbounded components $\Omega_{\sfE_1}$ and $\Omega_{\sfE_2}$ such that there exists a conformal map 
	\[\Phi:\Omega_{\sfE_1}\rightarrow \Omega_{\sfE_2}\]
	with $\Phi(z) = \alpha z+O(1)$ as $z\rightarrow\infty$, $\alpha\neq 0$, then \cite[Theorem 5.2.3]{ransford95}
	\begin{equation}
		|\alpha|\capacity(\sfE_1) = \capacity(\sfE_2).
		\label{eq:capacity_conformal_map}
	\end{equation}
	
One way to determine capacities is given in the following construction. Let \(\Omega\) denote an open subset of \(\RS\),  such that \(\partial \Omega\) is compact in \(\C\) and non-polar, then there exists a unique function \(G_{\Omega}:\Omega\times \Omega\to (0,\infty]\) with the properties
	\begin{itemize}
		\item $z\mapsto G_{\Omega}(z,w)$ is harmonic and bounded on closed subsets of $\Omega\setminus \{w\}$,
		\item As $z\rightarrow w\in \Omega$
	\begin{equation}
		G_{\Omega}(z,w) = \begin{cases}
		\log |z|+O(1), & w = \infty,\\
		-\log|z-w|+O(1), & w\neq \infty;
	\end{cases}
	\label{eq:greens_function_definition}
	\end{equation}
	\item $G_{\Omega}(z,w)\rightarrow 0$ as $z\rightarrow\zeta$ for quasi-every \(\zeta\in \partial \Omega\).
	\end{itemize}

 	The function $z\mapsto G_{\Omega}(z,w)$ is called \emph{Green's function} of $\Omega$ with a pole at $w$. A point $\zeta$ on the boundary $\partial\Omega$ is said to be a regular boundary point if
 	\[\lim_{z\rightarrow \zeta}G_{\Omega}(z,w)=0\]
 	for $w\in \Omega$. If the entire boundary consists of regular points then we call the boundary a regular set. Throughout this section we will mostly be concerned with Green's function of~\(\Omega_{\sfE}\) with a pole at $\infty$ which we simply denote with $G_{\sfE}(z)\coloneqq G_{\Omega_{\sfE}}(z,\infty)$. The asymptotical behavior at $\infty$ is explicit in terms of capacity. Indeed \cite[Theorem 5.2.1]{ransford95} implies that
	\begin{equation}
		G_{\sfE}(z) = \log|z|-\log \capacity(\sfE)+o(1),\quad \text{ as }z\rightarrow \infty.
		\label{eq:greens_function_at_infinity}
	\end{equation}
	If $Q$ is a polynomial with leading term \(az^m\) and $\sfE$ is a compact set with regular outer boundary, then
	\begin{equation}
		G_{Q^{-1}(\sfE)}(z) = \frac{1}{m}G_{\sfE}(Q(z))
		\label{eq:green_potential_polynomial_preimage}
	\end{equation}
	It follows from \eqref{eq:green_potential_polynomial_preimage} that if \(\sfE\) is any compact set, then  
	\begin{equation}
		\capacity(Q^{-1}(\sfE)) = \left(\frac{\capacity(\sfE)}{|a|}\right)^{1/m},
		\label{eq:capacity_preimage}
	\end{equation}
	see also \cite[Theorem 5.2.3]{ransford95}.
		We further have from \cite[Theorem 4.4.2]{ransford95} that
	\begin{equation}
		G_{\sfE}(z) = \cE(\mu_{\sfE})-U^{\mu_{\sfE}}(z).
		\label{eq:greens_function_from_potential}
	\end{equation}
	By combining this with \eqref{eq:equilibrium_measure_representation} we can prove Lemma \ref{lem:log_integral_equilibrium_measure} which we have used in the proof of Theorem \ref{thm:markov_weighted_chebyshev}.
	\begin{proof}[Proof of Lemma \ref{lem:log_integral_equilibrium_measure}]
			Using \eqref{eq:equilibrium_measure_representation} we recognize the relation
			\[-U^{\mu_{[-1,1]}}(z) = \frac{1}{\pi}\int_{-1}^{1}\frac{\log|x-z|}{\sqrt{1-x^2}}dx.\]
			It is easy to verify that 
			\[G_{[-1,1]}(z) = \log|z+\sqrt{z^2-1}|\]
			from the characterizing properties of Green's function. 
			Since \[G_{[-1,1]}(z) = \log |z|+\log 2+o(1),\quad z\to\infty\] we conclude from \eqref{eq:greens_function_at_infinity} that $\cE([-1,1]) = \log 2$ and if $z\notin [-1,1]$ we gather from \eqref{eq:greens_function_from_potential} that
			\[\frac{1}{\pi}\int_{-1}^{1}\frac{\log|x-z|}{\sqrt{1-x^2}}dx = \log|z+\sqrt{z^2-1}|-\log 2.\]
			This proves the relation if $z\notin [-1,1]$.
			
			Since both sides of \eqref{eq:greens_function_from_potential} are lower semicontinuous on $\C$ and agree outside a set with area measure zero they coincide on all of $\C$, see \cite[Theorem 2.7.5]{ransford95}.
		\end{proof}

	The relation in \eqref{eq:capacity_preimage} can be used to get lower bounds for Chebyshev polynomials. Extending Faber's lower bound \eqref{eq:conformal_map_chebyshev_lower_bound} is the so-called Szeg\H{o} inequality from \cite{szego24}.
	\begin{theorem}[Szeg\H{o} (1924) \cite{szego24}]
		Let $\sfE\subset \C$ denote a compact set then
		\begin{equation}
			\capacity(\sfE)^n\leq t_n(\sfE).
			\label{eq:szego_inequality}
		\end{equation}
		\label{thm:szego_inequality}
	\end{theorem}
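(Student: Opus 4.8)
The plan is to use the polynomial preimage formula for capacity, equation \eqref{eq:capacity_preimage}, applied to the Chebyshev polynomial itself. Set $T = T_n^{\sfE}$, the monic Chebyshev polynomial of degree $n$, so its leading coefficient is $a = 1$. Let $r = t_n(\sfE) = \|T\|_{\sfE}$ and consider the closed disk $\overline{D(0,r)} = \{w : |w| \le r\}$. By definition of the uniform norm, $T(\sfE) \subset \overline{D(0,r)}$, which means $\sfE \subset T^{-1}(\overline{D(0,r)})$.

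First I would invoke monotonicity of capacity under set inclusion (a standard fact, e.g. \cite[Theorem 5.1.2]{ransford95}): since $\sfE \subset T^{-1}(\overline{D(0,r)})$, we have $\capacity(\sfE) \le \capacity(T^{-1}(\overline{D(0,r)}))$. Next I would compute the right-hand side using \eqref{eq:capacity_preimage} with $Q = T$, $n = \deg T$, $|a| = 1$, and the base set $\overline{D(0,r)}$. Since $\capacity(\overline{D(0,r)}) = r$ (the capacity of a disk equals its radius, which follows from the scaling relation \eqref{eq:capacity_conformal_map} together with $\capacity(\overline{\D}) = 1$), we obtain
\[
\capacity\bigl(T^{-1}(\overline{D(0,r)})\bigr) = \left(\frac{r}{1}\right)^{1/n} = r^{1/n} = t_n(\sfE)^{1/n}.
\]
Combining the two displays gives $\capacity(\sfE) \le t_n(\sfE)^{1/n}$, and raising both sides to the $n$th power yields $\capacity(\sfE)^n \le t_n(\sfE)$, which is \eqref{eq:szego_inequality}.

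There is essentially no hard step here; the argument is a clean application of \eqref{eq:capacity_preimage}. The one point requiring a small amount of care is the degenerate case: if $\sfE$ is polar then $\capacity(\sfE) = 0$ and the inequality is trivial, while if $\sfE$ is finite with fewer than $n+1$ points the Chebyshev polynomial is not unique but still $t_n(\sfE) = 0$ and again $\capacity(\sfE) = 0$, so the inequality holds. In the nondegenerate case where $t_n(\sfE) > 0$, the formula \eqref{eq:capacity_preimage} applies directly. The only mild subtlety is making sure \eqref{eq:capacity_preimage}, as quoted from \cite{ransford95}, is stated for preimages of arbitrary compact sets (not merely regular ones) — but since $\overline{D(0,r)}$ is as regular as one could wish, this causes no difficulty.
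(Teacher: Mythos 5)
Your argument is correct and is essentially identical to the paper's proof: the set $T^{-1}(\overline{D(0,r)})$ you construct is exactly the lemniscatic set $\sfE_n = \{z : |T_n^{\sfE}(z)| \leq t_n(\sfE)\}$ used there, and both proofs combine monotonicity of capacity under inclusion with the preimage formula \eqref{eq:capacity_preimage} and the fact that the capacity of a disk equals its radius. Your added remarks on the degenerate polar and finite cases are a harmless bonus.
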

	\begin{proof}
		It is clear that the lemniscatic set
		\[\sfE_n \coloneqq \{z: |T_n^{\sfE}(z)|\leq t_n(\sfE)\}\]
		contains $\sfE$. Since $\capacity$ increases under set inclusions we gather that
		\[\capacity(\sfE)\leq \capacity(\sfE_n).\]
		Recalling that the capacity and radius of a disk coincide the result follows from \eqref{eq:capacity_preimage} since 
		\[\capacity(\sfE_n) = t_n(\sfE)^{1/n}.\qedhere\]
	\end{proof}

	Szeg\H{o}'s inequality, Theorem \ref{thm:szego_inequality}, can be related to \eqref{eq:conformal_map_chebyshev_lower_bound}. In the case that $\Omega_{\sfE}$ is simply connected and $\Phi:\Omega_{\sfE}\rightarrow\overline{\C}\setminus \overline{\D}$ is the conformal map satisfying \eqref{eq:canonical_conformal_map}, then
	\begin{equation}
		\Phi'(\infty)\capacity(\sfE) = 1.
		\label{eq:conformal_der_capacity}
	\end{equation}
	To see this, note that 
	\begin{equation*}
		G_{\sfE}(z) = \log|\Phi(z)| = \log|z|+\log|\Phi'(\infty)|+o(1), \quad z\to \infty
	\end{equation*}
	and by referring to the defining properties of the Green's function we find that
	\begin{equation}
		G_{\sfE}(z) = \log |\Phi(z)|,
		\label{eq:green_func_conformal_map}
	\end{equation} which together with \eqref{eq:greens_function_at_infinity} additionally proves \eqref{eq:conformal_der_capacity}. As a further consequence we see that the sets \(\sfE(r)\) defined previously, have the representation
	\[\sfE(r) = \{z: |\Phi(z)| = r\} = \{z: G_{\sfE}(z) = \log r\},\quad r>1.\]
	The curves $\sfE(r)$ are therefore called the Green lines or equipotential curves corresponding to the set $\sfE$.
	
	Szeg\H{o} actually proved another connection between $\capacity(\sfE)$ and $t_n(\sfE)$. Related ideas had previously appeared in \cite{faber20, fekete23}.
	
\begin{theorem}[Faber, Fekete, \& Szeg\H{o} {\cite{faber20,fekete23,szego24}}]
Let $\sfE \subset \C$ be a compact set. Then
\[
\lim_{n\to\infty} t_n(\sfE)^{1/n} = \capacity(\sfE).
\]
\label{thm:ffs-theorem}
\end{theorem}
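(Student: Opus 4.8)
The plan is to squeeze $t_n(\sfE)^{1/n}$ between two quantities that both converge to $\capacity(\sfE)$. The lower bound is already available: Szeg\H{o}'s inequality (Theorem~\ref{thm:szego_inequality}) gives $\capacity(\sfE)\le t_n(\sfE)^{1/n}$ for every $n$. So the entire content is the upper estimate $\limsup_{n}t_n(\sfE)^{1/n}\le\capacity(\sfE)$, and for this I would bring in Fekete points.

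For each integer $N\ge2$, choose $w_1,\dots,w_N\in\sfE$ maximizing the Vandermonde product $\prod_{1\le i<j\le N}|w_i-w_j|$ and write this maximum as $\delta_N^{N(N-1)/2}$, so that $\delta_N$ is the $N$th Fekete diameter of $\sfE$. Maximality says that moving $w_k$ to an arbitrary point $z\in\sfE$ cannot increase the product, which after cancelling the common factors reads $\prod_{i\ne k}|z-w_i|\le\prod_{i\ne k}|w_k-w_i|$. Hence for each $k$ the monic degree $N-1$ polynomial $q_k(z):=\prod_{i\ne k}(z-w_i)$ satisfies $\|q_k\|_{\sfE}\le\prod_{i\ne k}|w_k-w_i|$. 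Multiplying these $N$ bounds, using $\prod_{k}\prod_{i\ne k}|w_k-w_i|=\big(\prod_{i<j}|w_i-w_j|\big)^2=\delta_N^{N(N-1)}$, and applying the arithmetic--geometric mean inequality, one of the $q_k$ obeys $\|q_k\|_{\sfE}\le\delta_N^{N-1}$. Since it is monic of degree $N-1$, this yields $t_{N-1}(\sfE)\le\delta_N^{N-1}$, that is, $t_n(\sfE)^{1/n}\le\delta_{n+1}$ for all $n\ge1$.

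The proof thus reduces to the classical identification of the transfinite diameter with the logarithmic capacity, $\lim_{N\to\infty}\delta_N=\capacity(\sfE)$; see \cite{fekete23} and \cite[Chapter~5]{ransford95}. I would argue as follows. The sequence $N\mapsto\delta_N$ is non-increasing (multiply the order-$N$ estimate over the $N+1$ sub-configurations obtained by deleting one point from an optimal order-$(N+1)$ configuration), so $d_\infty(\sfE):=\lim_N\delta_N$ exists, and $d_\infty(\sfE)\ge\capacity(\sfE)$ follows at once from the inequalities just displayed together with Szeg\H{o}'s bound. For the reverse inequality, observe that $\tfrac1{N^2}\sum_{i\ne j}\log\tfrac1{|w_i-w_j|}=\tfrac{N-1}{N}\log\tfrac1{\delta_N}\to\log\tfrac1{d_\infty(\sfE)}$; letting $\nu$ be a subsequential weak-$\ast$ limit of the normalized counting measures $\nu_N=\tfrac1N\sum_{k=1}^N\delta_{w_k}$ of the Fekete configurations (so $\nu\in\cM(\sfE)$, as $\sfE$ is closed) and invoking the principle of descent — truncate the logarithmic kernel at a level $M$ so that it becomes continuous and bounded, pass to the limit in $N$ by weak-$\ast$ convergence, then let $M\to\infty$ by monotone convergence — gives $\cE(\nu)\le\log\tfrac1{d_\infty(\sfE)}$. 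Since $\mu_{\sfE}$ minimizes energy over $\cM(\sfE)$ we conclude $\cE(\mu_{\sfE})\le\cE(\nu)\le\log\tfrac1{d_\infty(\sfE)}$, i.e. $d_\infty(\sfE)\le e^{-\cE(\mu_{\sfE})}=\capacity(\sfE)$.

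Combining the pieces, $\capacity(\sfE)\le t_n(\sfE)^{1/n}\le\delta_{n+1}\to\capacity(\sfE)$, which is the assertion. The polar case needs no separate treatment: if $\capacity(\sfE)=0$ then $\cE(\nu)=\infty$ for every $\nu\in\cM(\sfE)$, so the displayed chain forces $d_\infty(\sfE)=0$, while Szeg\H{o}'s bound is vacuous. The one genuinely delicate step is the lower-semicontinuity/principle-of-descent argument: the logarithmic kernel is unbounded below on the diagonal, so weak-$\ast$ convergence of the counting measures does not control their energies directly, and the kernel truncation is precisely the device that repairs this.
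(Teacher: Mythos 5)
The paper states this theorem without proof, citing the original sources, so there is no in-text argument to compare yours against; judged on its own, your proof is correct and follows the classical Fekete--Szeg\H{o} route. The chain $\capacity(\sfE)\le t_n(\sfE)^{1/n}\le\delta_{n+1}$ is exactly right: the lower bound is the paper's Theorem \ref{thm:szego_inequality}, and the upper bound via the polynomials $q_k(z)=\prod_{i\ne k}(z-w_i)$ attached to a Fekete configuration, together with $\prod_k\prod_{i\ne k}|w_k-w_i|=\delta_N^{N(N-1)}$ and the fact that the minimum does not exceed the geometric mean, is the standard and correct argument. The reduction to $\lim_N\delta_N=\capacity(\sfE)$ is also handled properly: monotonicity of $\delta_N$ by the deletion argument, $d_\infty\ge\capacity$ from the displayed chain, and $d_\infty\le\capacity$ by the principle of descent applied to a weak-$\ast$ limit of the Fekete counting measures. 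The only step you should make explicit is the diagonal bookkeeping in that last argument: the double integral $\iint k_M\,d\nu_N\,d\nu_N$ includes the $i=j$ terms, which contribute $M/N$ on top of the off-diagonal sum $\tfrac{N-1}{N}\log\tfrac1{\delta_N}$; since this is $O(M/N)\to0$ for fixed $M$, the conclusion $\cE(\nu)\le\log\tfrac1{d_\infty}$ stands, but the estimate as written silently drops these terms. Your treatment of the polar case is also correct. Note that what you prove en route (transfinite diameter equals capacity) is itself the main content of \cite{fekete23,szego24}, so your write-up is in effect a self-contained proof of a result the survey deliberately outsources; citing \cite[Theorem 5.5.2]{ransford95} for that identification would be the shorter path consistent with the paper's conventions.
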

\vspace{-.5cm}
Theorem~\ref{thm:ffs-theorem} implies that the \emph{Widom factor} of degree $n$ associated with $\sfE$ defined by
\begin{equation}
	\cW_n(\sfE) := \frac{t_n(\sfE)}{\capacity(\sfE)^n},
	\label{eq:definition_widom_factor}
\end{equation}
can grow at most subexponentially. On the other hand, any subexponential growth rate of
$\{\cW_n(\sfE)\}$ is possible; see~\cite{goncharov-hatinoglu15}, where the terminology \emph{Widom factor} was introduced. It should be emphasized that the sets considered there are of Cantor type.

An easy example, showing that there is no universal upper bound to the Widom factors is furnished by the following. Consider the union of two disjoint intervals
\[
	\sfE(a) = [-1,-a]\cup[a,1], \qquad 0<a<1.
\]
By Theorem~\ref{thm:achieser_disjoint_intervals} together with
Theorem~\ref{thm:ffs-theorem}, we have
\[
	\capacity(\sfE(a)) = \frac{1}{2}\sqrt{1-a^2}.
\]
Consequently, using \eqref{eq:achieser_limit_pts}, we obtain
\[
	\cW_{2n}(\sfE(a)) = 2,
	\qquad
	\lim_{n\to\infty}\cW_{2n+1}(\sfE(a))
	= 2\cdot \sqrt{\frac{1+a}{1-a}}.
\]
As $a\to 1$, the latter quantity diverges, showing that Widom factors may be arbitrarily large. In this example the underlying set $\sfE(a)$ varies with the parameter $a$.

Theorem \ref{thm:szego_inequality} and Theorem \ref{thm:ffs-theorem} implies that
\begin{equation}
	\cW_n(\sfE)\geq 1,\qquad \cW_n(\sfE)^{1/n}\to 1,\quad n\to \infty.\label{eq:szego_and_ffs}
\end{equation}

	It is an important observation to note that Widom factors are invariant under dilations and translations. Therefore, the Widom factors do not depend on the size of the set, but rather on its topological, geometric, and potential-theoretic properties.

	\begin{theorem}
		Let $\sfE\subset \C$ denote a compact non-polar set. If $\alpha\in \C\setminus \{0\}$ and $b\in \C$ then
		\[\cW_n(\alpha \sfE+b) = \cW_n(\sfE).\]
		\label{thm:widom_translation_rotation_invariant}
	\end{theorem}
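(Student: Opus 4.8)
The plan is to compute separately how the numerator $t_n(\sfE)$ and the denominator $\capacity(\sfE)^n$ in the definition \eqref{eq:definition_widom_factor} transform under the affine map $\psi(z)=\alpha z+b$, and then to observe that the scaling factors cancel.

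First I would treat the Chebyshev numbers. The assignment $P\mapsto \alpha^{-n}\,P\circ\psi$ sends a monic polynomial $P(w)=w^n+\cdots$ to $\alpha^{-n}P(\alpha z+b)=z^n+\cdots$, which is again monic of degree $n$; its inverse is $Q\mapsto \alpha^{n}\,Q\circ\psi^{-1}$, so this is a bijection of the set of monic polynomials of degree $n$ onto itself. Since
\[
\sup_{w\in\psi(\sfE)}|P(w)|=\sup_{z\in\sfE}|P(\psi(z))|=|\alpha|^{n}\sup_{z\in\sfE}\big|\alpha^{-n}P(\psi(z))\big|,
\]
taking the infimum over all monic $P$ of degree $n$ yields $t_n(\alpha\sfE+b)=|\alpha|^{n}\,t_n(\sfE)$.

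Next I would handle the capacity using the conformal invariance already recorded in \eqref{eq:capacity_conformal_map}. The map $\psi$ carries $\Omega_{\sfE}$ conformally onto $\Omega_{\alpha\sfE+b}=\psi(\Omega_\sfE)$ and satisfies $\psi(z)=\alpha z+O(1)$ as $z\to\infty$, so $\capacity(\alpha\sfE+b)=|\alpha|\,\capacity(\sfE)$. In particular $\alpha\sfE+b$ is again non-polar, so $\cW_n(\alpha\sfE+b)$ is well defined, and substituting both relations into \eqref{eq:definition_widom_factor} gives
\[
\cW_n(\alpha\sfE+b)=\frac{|\alpha|^{n}t_n(\sfE)}{\big(|\alpha|\capacity(\sfE)\big)^{n}}=\frac{t_n(\sfE)}{\capacity(\sfE)^{n}}=\cW_n(\sfE).
\]

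There is no substantial obstacle here; the only step deserving a little care is verifying that pre-composition with $\psi$ followed by the normalisation $\alpha^{-n}$ is genuinely a bijection of the monic degree-$n$ polynomials, since this is precisely what guarantees that the infimum defining $t_n$ transforms by exactly the factor $|\alpha|^{n}$ rather than merely satisfying an inequality.
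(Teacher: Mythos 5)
Your proposal is correct and follows essentially the same route as the paper: compute that $t_n(\alpha\sfE+b)=|\alpha|^n t_n(\sfE)$ and $\capacity(\alpha\sfE+b)=|\alpha|\capacity(\sfE)$ via \eqref{eq:capacity_conformal_map}, then cancel. The only cosmetic difference is that you obtain the transformation of $t_n$ from a bijection of monic polynomials at the level of the infimum, whereas the paper invokes uniqueness of the minimizer to write $\alpha^n T_n^{\alpha\sfE+b}\bigl(\frac{z-b}{\alpha}\bigr)=T_n^{\sfE}(z)$; both yield the same exact identity.
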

	\vspace{-1cm}
	\begin{proof}
		First of all \eqref{eq:definition_widom_factor} is well-defined since $\capacity{\sfE}>0$ by assumption. Furthermore from \eqref{eq:capacity_conformal_map} we gather that
		\[\capacity(\alpha\sfE+b) = |\alpha|\capacity(\sfE).\]
		On the other hand, by the uniqueness of the Chebyshev polynomial,
		\[\alpha^nT_n^{\alpha \sfE+b}\left(\frac{z-b}{\alpha}\right) = T_n^{\sfE}(z).\]
		Therefore $t_n(\alpha\sfE+b) = |\alpha|^nt_n(\sfE)$ and so we see that
		\[\cW_n(\alpha\sfE+b) = \frac{t_n(\alpha\sfE+b)}{\capacity(\alpha\sfE+b)^n} = \frac{|\alpha|^nt_n(\sfE)}{|\alpha|^n\capacity(\sfE)} = \cW_n(\sfE).\qedhere\]
	\end{proof}
	
If \(\Omega_{\sfE}\) is simply connected, then
\begin{equation}
	t_n(\sfE)\Phi'(\infty)^n = \cW_n(\sfE)
	\label{eq:conformal_map_widom_factor}
\end{equation}
and we saw previously in Theorems \ref{thm:faber_asymptotics}, \ref{thm:suetin}, and \ref{thm:wennman} that certain regularity conditions on the boundary of $\sfE$ guarantees that $\cW_n(\sfE)\to 1$ as \(n\to\infty\), which by \eqref{eq:szego_and_ffs} is the minimal possible limit. 

We saw in Theorem \ref{thm:chebyshev_on_lemnsicate} examples of sets where, at least for a subsequence, $\cW_n(\sfE) = 1$ holds. Indeed, assuming that $P$ is a monic polynomial of degree $m$ and $r>0$ then \eqref{eq:capacity_preimage} implies that with $\sfE(r) = \{z:|P(z)| = r^m\}$
\[\capacity(\sfE(r)) = \capacity(P^{-1}\{z:|z|\leq r^m\}) = r.\]
On the other hand, by Theorem \ref{thm:chebyshev_on_lemnsicate}, $T_{nm}^{\sfE(r)} = P(z)^n$ and therefore
\[\cW_{nm}(\sfE(r)) = \frac{t_{nm}(\sfE(r))}{\capacity(\sfE(r))^{nm}} = \frac{r^{nm}}{r^{nm}} = 1.\]
A natural question is whether there are other examples of sets where Szeg\H{o}'s lower bound is saturated at least for a subsequence. The answer turns out to be no. 

\begin{theorem}[Christiansen, Simon \& Zinchenko  (2020) \cite{christiansen-simon-zinchenko-III}]
	Let $\sfE\subset\C$ be a compact set with positive logarithmic capacity and unbounded complement $\Omega_{\sfE}$. Then $\cW_{n}(\sfE) = 1$ if and only if there is a polynomial $P$ of degree $n$ such that
	\[\partial \Omega_{\sfE} = \{z: |P(z)| = 1\}.\]
	\label{thm:csz_szego_lower_bound_equality}
\end{theorem}
\vspace{-1cm}
\begin{proof}
	Without loss of generality we may assume that \(\RS\setminus \Omega_{\sfE} = \sfE\) and by Theorem \ref{thm:widom_translation_rotation_invariant}  we may assume that \(\capacity(\sfE) =1\). Then \(|T_n^{\sfE}(z)|\leq 1\) for \(z\in \sfE\) and consequently
	\[\lim_{z\to \zeta}\left(\frac{1}{n}\log |T_n^{\sfE}(z)|-G_{\sfE}(z)\right)\leq 0,\qquad \text{q.e. }\zeta\in \partial \Omega_{\sfE}\]
	The maximum principle \cite[Theorem 3.6.9]{ransford95} implies that
	\[\frac{1}{n}\log |T_n^{\sfE}(z)|-G_{\sfE}(z)\leq 0,\quad z\in \Omega_{\sfE}.\]
		Since	
		\[\frac{1}{n}\log |T_n^{\sfE}(z)|-G_{\sfE}(z)\to 0,\quad z\to \infty.\]
	this maximum is attained at \(\infty\) and therefore \cite[Theorem 2.3.1]{ransford95} implies that
	\[\frac{1}{n}\log |T_n^{\sfE}(z)| = G_{\sfE}(z)>0,\quad z\in \Omega_{\sfE}.\]
	As a consequence
	\[\{z: |T_n^{\sfE}(z)|\leq 1\}\subset \sfE.\]
	The reverse inclusion is trivial. The result now follows from the fact that 
	\[ \{z: |T_n^{\sfE}(z)| = 1\}=\partial\{z: |T_n^{\sfE}(z)|\leq 1\}= \partial\sfE = \partial\Omega_{\sfE}.\qedhere\]
\end{proof}
As we see, sets saturating Szeg\H{o}'s lower bound are precisely lemniscates. If we again let~$\sfE(r) = \{z: |P(z)| = r^m\}$, where $\deg P =m$, then $\sfE(r)$ will be an analytic Jordan curve if $r>0$ is large enough. For such values of $r$, Theorem \ref{thm:faber_asymptotics} implies that $\cW_{n}(\sfE(r))\rightarrow 1$ as $n\rightarrow \infty$. The critical case occurs when $r = r_0$ is the smallest value for which $\sfE(r)$ is connected. In this case,~$\sfE(r_0)$ will no longer be a Jordan curve as it will contain a point of self intersection. Classical theory is insufficient to determine the limit points of $\cW_n(\sfE(r_0))$. This critical case is studied in relation to the family of polynomials $P(z) = z^m-1$ in \cite{bergman-rubin24}.

As we saw before, it is an interesting question to determine when Szeg\H{o}'s lower bound can be asymptotically saturated. In the case of several components or if the set contains a proper arc component then this cannot happen.
\begin{theorem}[Totik (2012) \cite{totik12}]
	Let \(\sfE\) consist of \(m\geq 2\) analytic Jordan curves lying exterior to one another.
	\begin{itemize}
		\item There exists a \(\beta>0\) and an infinite set \(I_1\subset \N\) such that
		\[\cW_n(\sfE)\geq 1+\beta,\qquad n\in I_1.\]
		\item There exists a \(C>0\) and an infinite set \(I_2\subset \N\) such that
		\[\cW_n(\sfE)\leq 1+Cn^{-1/(m-1)},\qquad n\in I_2.\]
	\end{itemize}
\end{theorem}
\begin{theorem}[Totik (2014) \cite{totik14}]
	Let \(\sfE\subset \C\) be compact with unbounded complement \(\Omega_{\sfE}\). If there is an open ball \(B\) such that \(B\cap \sfE\) is a \(C^{1+\alpha}\) Jordan arc, and \(B\setminus \sfE\subset \Omega_{\sfE}\), then there is a \(\beta>0\) such that
	\[\cW_n(\sfE)\geq 1+\beta, \qquad n\in \N.\]
\end{theorem}

We saw previously that $t_n([-1,1]) = 2\capacity([-1,1])^n$ so $\cW_n([-1,1])=2$ and this is in fact the optimal lower bound in the real setting. This was shown by Schiefermayr in \cite{schiefermayr08} using different means than presented here. Our proof is based on the same idea as the proof of Theorem \ref{thm:szego_inequality}.
	\begin{theorem}[Schiefermayr (2008) \cite{schiefermayr08}]
		Let $\sfE\subset \R$ denote a compact set, then
		\begin{equation}
			2\capacity(\sfE)^n\leq t_n(\sfE).
			\label{eq:schiefermayr_inequality}
		\end{equation}
		\label{thm:schiefermayr}
	\end{theorem}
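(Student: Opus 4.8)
The plan is to follow the strategy behind Szeg\H{o}'s inequality (Theorem \ref{thm:szego_inequality}), but to take advantage of the fact that on a real set the Chebyshev polynomial cannot spread out into the whole disc of radius $t_n(\sfE)$: since $T_n^{\sfE}$ has real coefficients, it maps $\sfE$ into the real interval $[-t_n(\sfE),t_n(\sfE)]$. As the capacity of $[-a,a]$ equals $a/2$ while the disc of radius $a$ has capacity $a$, this single observation is exactly what upgrades Szeg\H{o}'s bound $\capacity(\sfE)^n\le t_n(\sfE)$ to $2\capacity(\sfE)^n\le t_n(\sfE)$.

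Concretely, I would argue as follows. The inequality is trivial when $\capacity(\sfE)=0$, so assume $\capacity(\sfE)>0$; then $\sfE$ is infinite and $t_n(\sfE)>0$ (otherwise a monic polynomial of degree $n$ would vanish on $\sfE$, forcing $\sfE$ to be finite), and $T_n^{\sfE}$ is a well-defined monic polynomial with real coefficients by the lemma preceding Theorem \ref{thm:alternation_theorem}. Writing $t=t_n(\sfE)$, consider the compact set $\sfE^{\ast}=(T_n^{\sfE})^{-1}\big([-t,t]\big)$, which is compact because $T_n^{\sfE}$ is a nonconstant polynomial. For $x\in\sfE$ we have $T_n^{\sfE}(x)\in\R$ and $|T_n^{\sfE}(x)|\le t$, hence $T_n^{\sfE}(x)\in[-t,t]$, so $\sfE\subseteq\sfE^{\ast}$. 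Since capacity is monotone under inclusion and $T_n^{\sfE}$ is monic of degree $n$, the polynomial-preimage formula \eqref{eq:capacity_preimage} gives $\capacity(\sfE)\le\capacity(\sfE^{\ast})=\capacity([-t,t])^{1/n}$. Finally $\capacity([-1,1])=1/2$, which can be read off from $G_{[-1,1]}(z)=\log|z+\sqrt{z^2-1}|$ (see the proof of Lemma \ref{lem:log_integral_equilibrium_measure}) together with \eqref{eq:greens_function_at_infinity}, so by the scaling relation \eqref{eq:capacity_conformal_map} one has $\capacity([-t,t])=t/2$. Hence $\capacity(\sfE)\le(t/2)^{1/n}$, which rearranges to $2\capacity(\sfE)^n\le t=t_n(\sfE)$.

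There is no substantial obstacle; the proof is genuinely short once Szeg\H{o}'s argument and the polynomial-preimage formula are available. The only points deserving care are (i) ensuring that \eqref{eq:capacity_preimage} is applied to a non-polar base set, which is why one first records $t_n(\sfE)>0$, and (ii) correctly identifying $\capacity([-t,t])=t/2$ rather than $t$ — it is precisely this factor that separates the real estimate from the general complex one. It is worth noting that, in contrast to most real-line arguments in this circle of ideas, the Alternation Theorem is not needed here: only the reality of the coefficients of $T_n^{\sfE}$ enters.
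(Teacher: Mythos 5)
Your proof is correct and follows essentially the same route as the paper: use the reality of the coefficients of $T_n^{\sfE}$ to conclude $\sfE\subset(T_n^{\sfE})^{-1}([-t_n(\sfE),t_n(\sfE)])$, apply the polynomial-preimage capacity formula \eqref{eq:capacity_preimage} with $\capacity([-t,t])=t/2$, and finish by monotonicity of capacity. The extra care you take with the non-polar/positivity caveats is a reasonable addition but does not change the argument.
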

	\vspace{-1cm}
	\begin{proof}
		The polynomial $T_n^{\sfE}$ is real and therefore
		\[\sfE\subset \sfE_n\coloneqq\{x: -t_n(\sfE)\leq T_n^{\sfE}(x)\leq t_n(\sfE)\}.\]
		From \eqref{eq:capacity_preimage} we gather that
		\[\capacity(\sfE_n) = \capacity([-t_n(\sfE),t_n(\sfE)])^{1/n} = \left(\frac{t_n(\sfE)}{2}\right)^{1/n}.\]
		The result follows by monotonicity of capacity with respect to set inclusion.
	\end{proof}
The sets $\sfE_n$ appearing in the proof of Theorem \ref{thm:schiefermayr} turn out to be important to the study of Widom factors in relation to real sets. It is easy to see from Theorem \ref{thm:alternation_theorem} that $T_n^{\sfE} = T_n^{\sfE_n}$ in this case. We will return to this later.
Totik has also proven an analogue of Theorem \ref{thm:csz_szego_lower_bound_equality} for the saturation of Schiefermayr's lower bound for real subsets. A new proof of the first part of the following theorem is provided in \cite{christiansen-simon-zinchenko-III}. 
\begin{theorem}[Totik (2011), (2014) \cite{totik11,totik14}]
	Let $\sfE\subset \R$ and fix $n\in \N$. Then
	\[\cW_n(\sfE) = 2\]
	if and only if there exists some polynomial $P$ of degree $n$ such that
	\[P^{-1}([-1,1]) = \sfE.\]
	Furthermore
	\[\lim_{n\rightarrow \infty}\cW_n(\sfE) = 2\]
	if and only if $\sfE$ is an interval.
	\label{thm:totik_schiefermayr_saturation}
\end{theorem}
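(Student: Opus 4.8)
Throughout, $\sfE\subset\R$ is assumed non-polar, so that $\cW_n(\sfE)$ is defined, and the argument is organised around the lemniscatic sets $\sfE_n:=(T_n^{\sfE})^{-1}\big([-t_n(\sfE),t_n(\sfE)]\big)$ already used in the proof of Theorem~\ref{thm:schiefermayr}. Recall from the discussion following that proof that $T_n^{\sfE_n}=T_n^{\sfE}$, so $t_n(\sfE_n)=\|T_n^{\sfE}\|_{\sfE_n}=t_n(\sfE)$, while \eqref{eq:capacity_preimage} applied to the monic polynomial $T_n^{\sfE}$ gives $\capacity(\sfE_n)^n=t_n(\sfE)/2$; hence $\cW_n(\sfE_n)=2$ and
\[
  \cW_n(\sfE)=2\left(\frac{\capacity(\sfE_n)}{\capacity(\sfE)}\right)^{n},
\]
which underlies the whole proof. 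For the forward implication of the first equivalence: if $\sfE=P^{-1}([-1,1])$ with $\deg P=n$ and leading coefficient $a\neq 0$, then $P$ maps $\sfE$ onto $[-1,1]$, so $\|P\|_{\sfE}=1$, the monic polynomial $P/a$ satisfies $\|P/a\|_{\sfE}=1/|a|$, and $t_n(\sfE)\le 1/|a|$. Since $\capacity([-1,1])=1/2$, \eqref{eq:capacity_preimage} gives $\capacity(\sfE)^n=1/(2|a|)$, so $\cW_n(\sfE)\le 2$; Schiefermayr's inequality \eqref{eq:schiefermayr_inequality} then forces $\cW_n(\sfE)=2$. The forward implication of the second equivalence is immediate by affine invariance of the Widom factor: $\cW_n([a,b])=\cW_n([-1,1])=t_n([-1,1])/\capacity([-1,1])^n=2^{1-n}/2^{-n}=2$ for every $n$.

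For the reverse implication of the first equivalence, suppose $\cW_n(\sfE)=2$; the displayed identity forces $\capacity(\sfE)=\capacity(\sfE_n)$. As $\mu_{\sfE}$ is a probability measure supported in $\sfE\subseteq\sfE_n$ with energy $\cE(\mu_{\sfE})=-\log\capacity(\sfE)=-\log\capacity(\sfE_n)=\cE(\mu_{\sfE_n})$, it is an equilibrium measure of $\sfE_n$ as well, so by uniqueness $\mu_{\sfE}=\mu_{\sfE_n}$. Now $\sfE_n=\big(T_n^{\sfE}/t_n(\sfE)\big)^{-1}([-1,1])$, and $T_n^{\sfE}/t_n(\sfE)$ admits an alternation set on $\sfE$ by Theorem~\ref{thm:alternation_theorem}, so Theorem~\ref{thm:equilibrium_measure_e_n} gives the explicit density \eqref{eq:equilibrium_on_e_n}, which is strictly positive on the interior of every band of $\sfE_n$; hence $\supp\mu_{\sfE_n}=\sfE_n$. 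Therefore $\sfE_n=\supp\mu_{\sfE_n}=\supp\mu_{\sfE}\subseteq\sfE\subseteq\sfE_n$, so $\sfE=\sfE_n=P^{-1}([-1,1])$ with $P=T_n^{\sfE}/t_n(\sfE)$ of degree $n$.

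For the reverse implication of the second equivalence I would argue by contraposition. Combining the displayed identity with the classical potential-theoretic formula $\log\big(\capacity(\sfE_n)/\capacity(\sfE)\big)=\int_{\sfE_n}G_{\sfE}\,d\mu_{\sfE_n}$ for the nested pair $\sfE\subseteq\sfE_n$ (which follows from \eqref{eq:greens_function_from_potential} and Fubini, using $U^{\mu_{\sfE_n}}\equiv\cE(\mu_{\sfE_n})$ quasi-everywhere on $\sfE_n$) yields
\[
  \cW_n(\sfE)=2\exp\Big(n\int_{\sfE_n}G_{\sfE}\,d\mu_{\sfE_n}\Big),
\]
with $G_{\sfE}$ extended by $0$ on $\sfE$. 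Assume $\sfE$ is not an interval and fix a bounded complementary interval $(c,d)$ of $\sfE$ together with a compact subinterval $J'\subset(c,d)$ on which $G_{\sfE}\ge g'>0$ (possible since $G_{\sfE}$ is positive and continuous on $(c,d)$). Every band of $\sfE_n$ carries $\mu_{\sfE_n}$-mass exactly $1/n$ by \eqref{eq:weight_of_band}, so if $\sfE_n$ has even one band contained in $J'$ then $n\int_{\sfE_n}G_{\sfE}\,d\mu_{\sfE_n}\ge g'$, hence $\cW_n(\sfE)\ge 2e^{g'}>2$. Thus the statement reduces to showing that for \emph{infinitely many} $n$ the set $\sfE_n$ — equivalently the preimage $(T_n^{\sfE})^{-1}([-t_n,t_n])$ — contains a band lying inside $J'$.

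This reduction is the crux, and the step I expect to demand the real work in a self-contained treatment. The bands of $\sfE_n$ are pinned down by the alternation points and zeros of $T_n^{\sfE}$, which asymptotically distribute like the equilibrium measure $\mu_{\sfE}$; since the gap $(c,d)$ has positive harmonic measure from $\infty$, for a positive proportion of $n$ a zero of $T_n^{\sfE}$ should land well inside $(c,d)$ with its band confined to $J'$, and when $\sfE$ is not itself a polynomial preimage of $[-1,1]$ the governing phases never return permanently to the ``flush'' configuration, so such $n$ form an infinite set. Turning this heuristic into a uniform estimate for the location of the bands of $\sfE_n$ inside a fixed gap is precisely the analysis carried out by Totik in \cite{totik11,totik14}, and it belongs to the same circle of ideas as Widom's description \cite{widom69} of the subsequential limit points of $\{\cW_n(\sfE)\}$, the two-interval case implicit in Theorem~\ref{thm:achieser_disjoint_intervals} being the prototype. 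I would invoke those references for the execution of this final step.
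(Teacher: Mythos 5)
The paper states this theorem without proof (it only cites \cite{totik11,totik14} and notes that \cite{christiansen-simon-zinchenko-III} contains a new proof of the first part), so your proposal must stand on its own. Your treatment of the first equivalence is complete and correct, and is essentially the argument of \cite{christiansen-simon-zinchenko-III}: the identity $\cW_n(\sfE)=2(\capacity(\sfE_n)/\capacity(\sfE))^n$, the uniqueness of the energy minimizer forcing $\mu_{\sfE}=\mu_{\sfE_n}$, and the full support of $\mu_{\sfE_n}$ via Theorem~\ref{thm:equilibrium_measure_e_n} fit together cleanly. The forward direction of the second equivalence is also fine.

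The reverse direction of the second equivalence, however, is genuinely incomplete, and the specific reduction you propose is not merely unproved but appears to be the wrong sufficient condition. Take $\sfE=\sfE(a)=[-1,-a]\cup[a,1]$ with $a$ small. For even degrees $\sfE_{2n}=\sfE$, so no band meets the gap at all. For odd degrees, $T_{2n+1}^{\sfE(a)}(x)=xQ_n(x^2)$ with all roots of $Q_n$ in $(a^2,1)$, so $T_{2n+1}$ has exactly one zero in $(-a,a)$ and hence at most one band meets the gap; a direct computation at degree $3$ (for small $a$ one finds $r=(1+a^3)/(1+a)$ and the critical point $\sqrt{r/3}>a$) shows this band is the \emph{entire} closed gap $[-a,a]$, which is contained in no compact $J'\subset(-a,a)$. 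So the event ``some band of $\sfE_n$ lies inside a fixed compact $J'$'' can fail for every small $n$ even though $\limsup_n\cW_n(\sfE(a))=2\sqrt{(1+a)/(1-a)}>2$ by Theorem~\ref{thm:achieser_disjoint_intervals}. What the argument actually requires is a lower bound of order $1/n$ on $\int_{(c,d)}G_{\sfE}\,d\mu_{\sfE_n}$ along a subsequence, and this can hold even when the unique band meeting the gap reaches its endpoints, where $G_{\sfE}$ vanishes; establishing it needs control of how the mass $1/n$ of that band is distributed inside the gap, which is exactly the analysis you defer to Totik. As written, then, the ``only if'' half of the limit statement is not proved: the heuristic about zeros equidistributing according to $\mu_{\sfE}$ does not by itself produce the required uniform-in-$n$ estimate, and the band-inside-$J'$ criterion you reduce to is false in the simplest non-interval example.
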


\subsection{Totik--Widom bounds}

A central problem in the theory of Chebyshev polynomials is to obtain uniform upper bounds for the associated Widom factors. 
We say that a compact set \(\sfE\) obeys \emph{Totik--Widom bounds} if there exists a constant \(C \ge 1\) such that
\[
\cW_n(\sfE) \le C,
\qquad n \in \N,
\]
in accordance with the terminology of \cite{christiansen-simon-zinchenko-I}.
If \(\sfE\) consists of a disjoint union of \(C^{1+\alpha}\) Jordan curves and arcs then it follows from \cite[Theorem 1.3]{totik14_2} (and implicitly in \cite{widom69} under the condition of \(C^{2+\alpha}\) smoothness) that \(\sfE\) satisfies a Totik--Widom bound. These bounds are implicit, an explicit Totik--Widom bound is provided in the case of lemniscates.
\begin{theorem}[Christiansen, Simon \& Zinchenko (2020) \cite{christiansen-simon-zinchenko-III}]
	Let $P$ be a monic polynomial of degree $m$ and 
	\[\sfE(r) = \{z: |P(z)| = r^m\}.\]
	For any $n$
	\[\cW_n(\sfE(r))\leq \max_{1\leq j \leq m} \cW_j(\sfE(r)).\]
\end{theorem}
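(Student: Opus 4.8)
\emph{Proof idea.} The plan is to use the two facts established above for the lemniscate $\sfE(r)$: that $|P|\equiv r^m$ on $\sfE(r)$ (by its very definition), and that $\capacity(\sfE(r)) = r$. Given $n\in\N$, write it uniquely as $n = qm + j$ with an integer $q\ge 0$ and $1\le j\le m$. The competitor polynomial for the degree-$n$ extremal problem on $\sfE(r)$ will be
\[Q(z) = P(z)^{q}\,T_j^{\sfE(r)}(z),\]
which is monic of degree $qm + j = n$.

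First I would evaluate its sup-norm: since $|P(z)| = r^m$ for every $z\in\sfE(r)$, one has $|Q(z)| = r^{qm}\,|T_j^{\sfE(r)}(z)|$ on $\sfE(r)$, hence $\|Q\|_{\sfE(r)} = r^{qm}\,t_j(\sfE(r))$. By the minimality defining $t_n(\sfE(r))$ this gives
\[t_n(\sfE(r)) \le r^{qm}\,t_j(\sfE(r)).\]
Dividing by $\capacity(\sfE(r))^n = r^{n} = r^{qm+j}$ and using the definition \eqref{eq:definition_widom_factor} then yields
\[\cW_n(\sfE(r)) = \frac{t_n(\sfE(r))}{r^{qm+j}} \le \frac{r^{qm}\,t_j(\sfE(r))}{r^{qm+j}} = \frac{t_j(\sfE(r))}{r^{j}} = \cW_j(\sfE(r)) \le \max_{1\le j\le m}\cW_j(\sfE(r)),\]
which is the asserted inequality.

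In fact this argument proves the sharper statement that $\cW_n(\sfE(r))\le \cW_{j}(\sfE(r))$ where $j\in\{1,\dots,m\}$ is the representative of $n$ modulo $m$; in particular, taking $j=m$ when $m\mid n$ and invoking $T_m^{\sfE(r)} = P$ from Theorem \ref{thm:chebyshev_on_lemnsicate} recovers $\cW_m(\sfE(r)) = 1$, hence $\cW_n(\sfE(r)) = 1$ together with Szeg\H{o}'s inequality (Theorem \ref{thm:szego_inequality}).

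There is no real obstacle: the entire content is the remark that on a lemniscate $\{z:|P(z)| = r^m\}$ multiplying an extremal polynomial of any degree $j$ by a power of the generating polynomial $P$ costs nothing in the sup-norm, so lower degrees control higher ones up to the period $m$; the value $\capacity(\sfE(r)) = r$ then makes the powers of $r$ cancel exactly. The only thing to be a little careful about is choosing the division-with-remainder convention $1\le j\le m$ (rather than $0\le j\le m-1$), which is exactly what makes the right-hand side involve $\cW_1,\dots,\cW_m$ and correctly covers the case where $n$ is a multiple of $m$.
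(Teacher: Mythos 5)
Your proof is correct and is essentially the paper's argument: the paper writes $n=qm+l$ and bounds $t_{qm+l}(\sfE(r))\le t_l(\sfE(r))\,t_{qm}(\sfE(r))$ using $T_{qm}^{\sfE(r)}=P^q$ from Theorem \ref{thm:chebyshev_on_lemnsicate}, which is exactly your competitor $P^qT_j^{\sfE(r)}$ in disguise. Your choice of remainder convention $1\le j\le m$ is in fact a small tidying-up of the paper's $0\le l\le m-1$, which otherwise needs the harmless observation that $\cW_0=1\le\max_j\cW_j$.
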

\begin{proof}
	We have already seen that $\capacity(\sfE(r)) = r$. Any natural number can be expressed as~$nm+l$ where $l\in \{0,1,\dotsc,m-1\}$ and $n\in \N$. Therefore by making use of Theorem \ref{thm:chebyshev_on_lemnsicate} we see that
	\[t_{nm+l}(\sfE(r))\leq t_{l}(\sfE(r))t_{nm}(\sfE(r)) = t_l(\sfE(r))\capacity(\sfE(r))^{nm} = \cW_l(\sfE(r))\capacity(\sfE(r))^{nm+l}.\]
	In conclusion,
	\[\cW_{nm+l}(\sfE(r))\leq \cW_l(\sfE(r))\leq \max_{1\leq j \leq m}\cW_j(\sfE(r)).\qedhere\]
\end{proof}
It follows from the proof that for a fixed $l$ the mapping $n\mapsto \cW_{nm+l}(\sfE(r))$ is decreasing. If $r$ is large enough so that $\sfE(r)$ is an analytic curve then Theorem \ref{thm:faber_asymptotics} implies that the limit is $1$.

It is shown in~\cite{christiansen-simon-zinchenko-I} that so-called \emph{Parreau--Widom} sets satisfy Totik--Widom bounds, generalizing previous results in \cite{totik09,widom69}. These are sets $\sfE\subset \C$ for which
\[\parreauwidom(\sfE)\coloneqq\sum_{\bigl\{z: \nabla G_{\sfE}(z) = 0\bigr\}}G_{\sfE}(z)<\infty,\]
see e.g. \cite{christiansen-simon-zinchenko-I, sodin-yuditskii97}.
In words, this quantity is equal to the sum of the critical values of the corresponding Green's function with a pole at infinity. It is clear that any finite union of compact intervals are Parreau--Widom sets. As we will see, a Parreau--Widom set \(\sfE\subset \R\) always satisfies a Totik--Widom bound. 
	
\begin{theorem}[Christiansen, Simon \& Zinchenko (2017) \cite{christiansen-simon-zinchenko-I}]
\label{thm:christiansen-simon-zinchenko}
	Let $\sfE\subset \R$ be a regular Parreau-Widom set. Then
	\begin{equation}
		\cW_n(\sfE)\leq 2\exp(\parreauwidom(\sfE)).
		\label{eq:parreau_widom_bound}
	\end{equation}
	\label{thm:jacob}
\end{theorem}
Proving this requires an explicit representation of the equilibrium measure on the set \[\sfE_n \coloneqq T_n^{\sfE}([-t_n(\sfE),t_n(\sfE)]).\] See also \cite{christiansen-simon-zinchenko-I, totik01}.
	
	\begin{lemma}
		Let \( P \) be a real polynomial of degree \( n \) such that there exist real points \( x_0 < x_1 < \cdots < x_n \) satisfying \( P(x_k) = (-1)^{n-k} \). If \(\sfE = P^{-1}([-1,1])\), then the equilibrium measure on \(\sfE\) is given by
		\begin{equation}
			d\mu_{\sfE}(x) = \frac{1}{n\pi}\frac{|P'(x)|}{\sqrt{1-P^{2}(x)}}dx.
			\label{eq:equilibrium_on_e_n}
		\end{equation}
		\label{thm:equilibrium_measure_e_n}
	\end{lemma}
	\begin{proof}
		We begin with noting that summing over the roots of $P(x) = t$ counting multiplicity we have
		\[\sum_{k=1}^{n}\log|P^{-1}(t)-z| = \log |t-P(z)|.\]
		
		We can partition $\sfE$ into $n$ intervals where $P$ is monotonic and which $P$ maps to $[-1,1]$ (these are so-called bands, see the discussion below). Applying the change of variables $t = P(x)$ on each of the intervals and recalling \eqref{eq:green_potential_polynomial_preimage} and \eqref{eq:greens_function_from_potential} gives us
		\begin{align*}
			\frac{1}{n\pi}&\int_{\sfE}\log|x-z|\frac{|P'(x)|}{\sqrt{1-P^{2}(x)}}dx = \frac{1}{n\pi}\sum_{k=1}^{n}\int_{-1}^{1}\log|P^{-1}(t)-z|\frac{1}{\sqrt{1-t^2}}dt \\
			& = \frac{1}{n\pi}\int_{-1}^{1}\log |t-P(z)|\frac{1}{\sqrt{1-t^2}}dt= \frac{1}{n}G_{[-1,1]}(P(z))-\frac{1}{n}\log 2 = G_{\sfE}(z)+\log \capacity(\sfE).
		\end{align*}
		Through another application of \eqref{eq:greens_function_from_potential} we conclude that
		\[G_{\sfE}(z)+\log \capacity(\sfE) = \int_{\sfE}\log |x-z|d\mu_{\sfE}(x)\]
		and therefore
		\[\frac{1}{n\pi}\int_{\sfE}\log|x-z|\frac{|P'(x)|}{\sqrt{1-P^{2}(x)}}dx = \int_{\sfE}\log |x-z|d\mu_{\sfE}(x).\]
		Since this equality persists for all $z\notin \sfE$ it follows from \cite[Theorem 2.7.5]{ransford95} that equality holds everywhere in $\C$. Through \cite[Theorem 3.7.4]{ransford95} we find that \eqref{eq:equilibrium_on_e_n} holds.
	\end{proof}A simple consequence of Lemma \ref{thm:equilibrium_measure_e_n} is that any band of $P$ weighs the same under the associated equilibrium measure. A band is an interval $[a,b]$ between adjacent alternating points of $P$. In other words, $P([a,b]) = [-1,1]$ and $P'(x)\neq 0$ for $x\in (a,b)$, \cite{christiansen-simon-zinchenko-I}. For such a band~$[a,b]$
	\begin{equation}
		\mu_{\sfE}([a,b]) = \frac{1}{n\pi}\int_{[a,b]}\frac{|P'(x)|}{\sqrt{1-P^{2}(x)}}dx = \frac{1}{n\pi}\int_{-1}^{1}\frac{dx}{\sqrt{1-x^2}}=\frac{1}{n}.
		\label{eq:weight_of_band}
	\end{equation}
	By considering $P\coloneqq T_n^{\sfE}/t_n(\sfE)$ we can easily translate these results to the Chebyshev polynomial setting. Indeed, by letting $\sfE_n = P^{-1}([-1,1]) = (T_n^{\sfE})^{-1}([-t_n,t_n])$ we find that each band of~$T_n^{\sfE}/t_n(\sfE)$ has the same measure $1/n$ under $\mu_{\sfE_n}$.
\begin{proof}[Proof of Theorem \ref{thm:jacob}]
	We reproduce the proof following \cite{christiansen-simon-zinchenko-I}. Let $\sfE_n \coloneqq (T_n^{\sfE})^{-1}([-t_n(\sfE),t_n(\sfE)])$ then \eqref{eq:capacity_preimage} implies that \begin{equation}
 t_n(\sfE) = 2\capacity(\sfE_n)^n.
 \label{eq:cheb_norm_en_set}	
 \end{equation}
On the other hand since $G_{\sfE}-G_{\sfE_n}$ is harmonic away from $\sfE_n$ we find that
	\[\int_{\sfE_n}G_{\sfE}(x)d\mu_{\sfE_n}(x) = \lim_{z\rightarrow \infty}\left(G_{\sfE}(z)-G_{\sfE_n}(z)\right) = \log \frac{\capacity(\sfE_n)}{\capacity(\sfE)}.\]
	Since \(\sfE\) is regular, we obtain that $G_{\sfE}(x) = 0$ for every $x\in \sfE$. Furthermore $\R\setminus \sfE$ consists of a union of open intervals, so called gaps. If $K\coloneqq(a,b)$ is such a gap then a consequence of alternation (see Theorem \ref{thm:alternation_theorem}) is that $K$ cannot intersect two different bands, where we recall the definition of a band, being the closure of a component of $(T_n^{\sfE})^{-1}((-t_n(\sfE),t_n(\sfE)))$, see also~\cite{christiansen-simon-zinchenko-I,peherstorfer92,peherstorfer93}. Lemma \ref{thm:equilibrium_measure_e_n} and in particular \eqref{eq:weight_of_band} implies that any band has weight $1/n$ with respect to $\mu_{\sfE_n}$. We therefore conclude that any gap $K$ satisfies
	\[\mu_{\sfE_n}(K) \leq \frac{1}{n}.\]
	Also, $G_{\sfE}(x)>0$ if $x\notin \sfE$ and since $G_{\sfE}$ vanishes at the endpoints of $K$ we find that $G_{\sfE}$ attains a maximum on $K$ at a point where $\partial_xG_{\sfE}$ vanishes. On the other hand $\partial_yG_{\sfE} = 0$ on $\R\setminus \sfE$ by symmetry. Therefore $G_{\sfE}$ is maximized on $(a,b)$ at a point where $\nabla G_{\sfE} = 0$. By letting~$\{K_j\}_{j\in I}$ denote the bounded gaps of $\R\setminus \sfE$ we find that
	\[\log \frac{\capacity(\sfE_n)}{\capacity(\sfE)} = \int_{\sfE_n}G_{\sfE}(x)d\mu_{\sfE_n}(x) \leq \sum_{j\in I}\mu_{\sfE_n}(K_j)\max_{x\in K_j}G_{\sfE}(x)\leq\frac{1}{n}\sum_{\{ z:\nabla G_{\sfE}(z) = 0\}}G_{\sfE}(z).\] 
	Exponentiating and recalling \eqref{eq:cheb_norm_en_set} gives us that
	\[t_n(\sfE) = 2\capacity(\sfE_n)^n \leq 2\capacity(\sfE)^n\exp (\parreauwidom(\sfE))\]
	which completes the proof.
\end{proof}
For further consequences of Theorem \ref{thm:jacob}, see \cite{christiansen-simon-zinchenko-II}. Extending these considerations to the full generality of the complex plane remains an area of research in its early stages and it is an open question whether Parreau--Widom subsets of the complex plane satisfy Totik--Widom bounds. In \cite{schiefermayr-zinchenko22} these concepts were investigated for subsets of the unit circle. A natural first step in generalizing \eqref{eq:parreau_widom_bound} to the complex plane is to demonstrate that compact connected sets have bounded Widom factors. This question was originally posed as an interesting problem in~\cite[Problem 4.4]{pommerenke72}, and it was initially claimed that D. Wrase had provided an example of a compact connected set with unbounded Widom factors. However, recent findings in~\cite{andrievskii17} have cast doubt on this claim. After nearly 50 years of being considered settled, this question now appears to be open once again. 

One specific family of sets which provide large Widom factors in the context of connected compact sets are the symmetric star graphs
\[\sfE_m = \{z^m \in [-2,2]\}\]
studied in \cite{christiansen-eichinger-rubin23}. It is shown there that
\[\cW_n(\sfE_m)\to 2\]
as \(n\to \infty\) but also that
\begin{equation}
	\cW_{m-1}(\sfE_m) = 2^{2-1/m} \to 4,\qquad m\to \infty.
	\label{eq:4}
\end{equation}
Consequently, if Parreau--Widom type bounds extend to the complex setting, then the \(2\) in \eqref{eq:parreau_widom_bound} needs to be replaced by some number which is at least \(4\).

So far, we have considered Faber polynomials as a natural family of trial polynomials with asymptotically small norms. 
An alternative approach is based on constructing polynomials via discretizations of the corresponding equilibrium measure. 
The following results are obtained through such constructions.

\begin{theorem}[Totik \& Varga (2015) \cite{totik-varga15}]
Let $\sfE\subset \C$ be a compact set with unbounded complement $\Omega_\sfE$. If $\partial \Omega_{\sfE}$ is a finite union of Dini-smooth Jordan arcs, disjoint except possibly at their endpoints and such that $\partial \Omega_{\sfE}$ does not contain a point with vanishing external angle, then~$\sfE$ satisfies a Totik--Widom bound.
	\label{thm:totik_varga_dini_smooth_boundary}
\end{theorem}

In \cite{andrievskii16, andrievskii17,andrievskii-nazarov19} Totik--Widom bounds for sets with reduced boundary regularity were studied. To better understand these results we consider quasicircles and quasiconformal arcs. A quasicircle $\sfE$ is a Jordan curve such that any three points on the boundary satisfies the so-called \textit{Ahlfors condition}: there exists some $A>0$, such that if $z_1,$ $z_2$ both belong to~$\sfE$ then
\[|z_1-z|+|z-z_2|\leq A|z_1-z_2|\]
whenever $z$ lies on that subarc of $\sfE$, with smallest diameter connecting $z_1$ and $z_2$, see e.g.~\cite{ahlfors66,garnett-marschall05}. A quasidisk is the interior region of a quasicircle. Examples of quasicircles include boundaries whose parametrization satisfies Lipschitz conditions but also fractal sets like the von Koch snowflake. A quasiconformal arc is any proper subarc of a quasicircle.
\begin{theorem}[Andrievskii (2017) \cite{andrievskii17,andrievskii-nazarov19}]
\label{thm:totik-widom-quasidisk}
If\/ $\sfE=\bigcup_{j=1}^{m}\sfE_j$ where the sets $\sfE_j$ are mutually disjoint closed quasidisks and quasiconformal arcs then $\sfE$ satisfies a Totik--Widom bound.\end{theorem}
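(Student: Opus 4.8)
The plan is to run Widom's method as recalled around~\eqref{eq:widom_convergence_equation}. By that estimate it suffices to bound $\|f_n\|_{\sfE}$ uniformly in $n$; equivalently, to exhibit for every $n$ a monic polynomial $P_n$ of degree $n$ with $\|P_n\|_{\sfE}\le C\,\capacity(\sfE)^n$ for a constant $C$ independent of $n$, since then $\cW_n(\sfE)=t_n(\sfE)/\capacity(\sfE)^n\le C$. Write $G_{\sfE}$ for the Green's function of $\Omega_{\sfE}$ with pole at infinity and let $\Phi$ be the (in general multivalued) analytic completion of $e^{G_{\sfE}+i\widetilde G_{\sfE}}$ on $\Omega_{\sfE}$, so that $|\Phi|=e^{G_{\sfE}}$, $\Phi(z)=\Phi'(\infty)z+O(1)$ near infinity with $\Phi'(\infty)\capacity(\sfE)=1$, and a loop around the component $\sfE_j$ multiplies $\Phi$ by $e^{2\pi i\omega_j}$, where $\omega_j=\mu_{\sfE}(\sfE_j)$ is the harmonic measure at infinity and $\sum_j\omega_j=1$. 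The $P_n$ will be obtained, exactly as in Widom's construction of $W_n$, as contour integrals of $\Phi^n$ against a bounded ``period-correcting'' factor $f_n$; the whole problem is the uniform bound on $\|f_n\|_{\sfE}$, and everything rests on the regularity forced by the quasiconformal hypotheses.

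First I would dispose of a single component. The defining property of a quasidisk --- and, via the fact that the complement of a quasiconformal arc is conformally a disk whose uniformizing map extends H\"older-continuously to the circle --- is that the relevant conformal map extends to a quasiconformal homeomorphism and is therefore H\"older continuous up to the boundary. Hence $\partial\Omega_{\sfE}$ is a regular set and $G_{\sfE}$ has a H\"older modulus of continuity on $\overline{\Omega}_{\sfE}$, which in turn bounds the best approximation of $\Phi^n$ on $\sfE$ by polynomials of degree $n$ uniformly in $n$; this quantitative polynomial-approximation input replaces the analyticity of $\Phi$ used in the proof of Theorem~\ref{thm:faber_asymptotics}. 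For a single quasidisk this already gives $\|F_n^{\sfE}\|_{\sfE}\le C\,\capacity(\sfE)^n$ and hence $\cW_n\le C$ (the quasiconformal analogue of Theorems~\ref{thm:convex_set_norm_bounds} and~\ref{thm:faber_suetin_bounds}). For a single quasiconformal arc one passes through the model segment $[-1,1]$, whose Chebyshev norms are pinned down by Theorems~\ref{thm:Bernstein} and~\ref{thm:achieser_disjoint_intervals} and already carry the factor $2$; this extra factor arises because the uniformizing coordinate is double-valued across the arc, and the behaviour near the two endpoints, where it degenerates, is precisely what the quasiconformal regularity controls.

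Next I would glue the components. Because the $\sfE_j$ are pairwise disjoint compacta, the Green's functions and harmonic measures of the individual pieces differ, in a neighbourhood of any given $\sfE_j$, only by functions analytic and uniformly bounded there, so the one-component estimates combine multiplicatively rather than accumulating with $n$. The period obstruction is handled as by Widom: take $f_n$ to be a product of Green's-function--based factors $B_j$, each of modulus one on $\partial\Omega_{\sfE}$ and analytic on $\Omega_{\sfE}$, with integer orders chosen so that $\Phi^n f_n$ has integer winding around every $\sfE_j$ --- solvable for each $n$ since $\sum_j\omega_j=1$ --- while $\|f_n\|_{\sfE}$ is dominated by the product of the per-component bounds. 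Then $W_n(z)=\frac1{2\pi i}\int_{\sfE}\frac{\Phi(\zeta)^n f_n(\zeta)}{\zeta-z}\,d\zeta$ is, up to a positive constant, a monic polynomial of degree $n$ with $\|W_n\|_{\sfE}\le C\,\capacity(\sfE)^n$, and~\eqref{eq:widom_convergence_equation} gives $\cW_n(\sfE)\le C$ for all large $n$; the finitely many remaining values are bounded trivially, so $\sfE$ satisfies a Totik--Widom bound. This is the route of \cite{andrievskii17,andrievskii-nazarov19}.

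The main obstacle is the uniform-in-$n$ bound on $\|f_n\|_{\sfE}$ in the simultaneous presence of arc endpoints and of several components. Away from endpoints the distortion theory of quasiconformal maps supplies everything needed, but at an arc endpoint the classical smooth Faber theory of Theorems~\ref{thm:faber_asymptotics} and~\ref{thm:faber_suetin_bounds} breaks down, and one must combine the H\"older bounds for quasiconformal maps with the structure of Theorem~\ref{thm:achieser_disjoint_intervals} to see that the period-correcting factors stay bounded there; checking that the component-by-component estimates multiply with constants independent of $n$ is the second delicate point.
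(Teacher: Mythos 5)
First, a point of order: the paper does not prove this theorem; it is stated as a cited result from \cite{andrievskii17,andrievskii-nazarov19}, so there is no in-text proof to compare yours against. Judged on its own terms, your proposal has a fatal gap at its central step. You claim that because the exterior conformal map of a quasidisk extends H\"older-continuously to the boundary, one gets $\|F_n^{\sfE}\|_{\sfE}\le C\,\capacity(\sfE)^n$ uniformly in $n$, ``the quasiconformal analogue of Theorems~\ref{thm:convex_set_norm_bounds} and~\ref{thm:faber_suetin_bounds}.'' This is false, and the paper itself records the counterexample in the discussion immediately following the theorem: by Gaier \cite{gaier99}, building on Clunie \cite{clunie59}, there is a quasicircle $\sfE$ and an $\alpha>0$ with $\|F_{n_k}^{\sfE}\|_{\sfE}/\capacity(\sfE)^{n_k}>n_k^{\alpha}$ along a subsequence. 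So H\"older regularity of $\Phi$ does \emph{not} control the polynomial approximation of $\Phi^n$ uniformly in $n$, the Faber polynomials are \emph{not} near-optimal on quasidisks, and the entire strategy of producing the competitor polynomials as Faber/Widom contour integrals $\frac{1}{2\pi i}\int\Phi^n f_n\,d\zeta/(\zeta-z)$ collapses. Indeed, the contrast between the Gaier--Clunie growth of Faber norms and the Totik--Widom bound of this theorem is precisely the point the survey is making; Andrievskii's actual proofs use a genuinely different construction (discretization of the equilibrium measure together with quasiconformal-reflection and Dzjadyk-type estimates), not the Faber operator.

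A second, related problem: you invoke \eqref{eq:widom_convergence_equation} to reduce the claim to a uniform bound on $\|f_n\|_{\sfE}$, but that asymptotic relation is established by Widom only for systems of Jordan curves of class $C^{2+}$ (see the discussion around \cite[Theorem 8.3]{widom69} in the text), which excludes exactly the fractal quasicircles the theorem is designed to cover; for those sets the relation $t_n(\sfE)\sim\|f_n\|_{\sfE}\Phi'(\infty)^{-n}$ is not available as an input. The remaining steps (the treatment of arc endpoints via $[-1,1]$ and Theorem~\ref{thm:achieser_disjoint_intervals}, and the claim that per-component estimates ``combine multiplicatively'') are assertions rather than arguments, but they are moot until the single-component bound is obtained by a method that survives the Gaier--Clunie obstruction.
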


It is not at all clear what the least upper bound is. Andrievskii also considered the case where no regularity is present and concluded the following.
\begin{theorem}[Andrievskii (2017) \cite{andrievskii17}]
	Let $\sfE = \bigcup_{j=1}^{m}\sfE_j$ where the sets $\sfE_j$ are mutually distjoint compact and connected sets that all satisfy $\operatorname{diam}(\sfE_j)>0$. Then as $n\rightarrow \infty$
	\[\cW_n(\sfE) = O(\log n).\] 
	\label{thm:andrievskii_general_totik_widom}
\end{theorem}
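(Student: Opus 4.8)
\emph{Strategy.} Since $\cW_n(\sfE)=t_n(\sfE)/\capacity(\sfE)^n$ and $\capacity(\sfE)>0$ (each $\sfE_j$ is a non-degenerate continuum), and Widom factors are invariant under affine maps, the statement reduces to producing, for large $n$, a monic polynomial $P_n$ of degree $n$ with $\|P_n\|_\sfE=O\bigl((\log n)\,\capacity(\sfE)^n\bigr)$; the complementary bound $\cW_n(\sfE)\ge 1$ is Szeg\H{o}'s inequality (Theorem~\ref{thm:szego_inequality}) and plays no role in an $O$-estimate. A convenient intermediate step is to enlarge $\sfE$ slightly: let $g=G_\sfE$ be the Green's function of $\Omega_\sfE$ with pole at infinity (extended by $0$ on $\sfE$). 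Because $\sfE$ has exactly $m$ components, $\Omega_\sfE$ is $m$-connected, so $g$ has only finitely many critical points and their critical values are bounded below by some $\delta_0>0$. For $0<\delta<\delta_0$ the sublevel set $\sfE^\delta:=\{z:g(z)\le\delta\}$ is a disjoint union of $m$ closures of Jordan domains with analytic boundary, $\sfE\subset\sfE^\delta$, and $g-\delta$ is the Green's function of $\Omega_{\sfE^\delta}$, so by \eqref{eq:greens_function_at_infinity} one has $\capacity(\sfE^\delta)=e^{\delta}\capacity(\sfE)$. Monotonicity of $t_n$ then gives $\cW_n(\sfE)\le e^{n\delta}\cW_n(\sfE^\delta)$; taking $\delta=\delta(n)\asymp 1/n$ keeps $e^{n\delta}$ bounded, so it suffices to show $\cW_n(\sfE^{\delta(n)})=O(\log n)$, i.e.\ to build a degree-$n$ monic polynomial that is small on a set whose boundary is analytic but only on the scale $1/n$.

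\emph{The polynomial.} On $\Omega_{\sfE^\delta}$ one imitates the Faber/Widom construction used in Theorems~\ref{thm:faber_asymptotics} and~\ref{thm:widom_weight}. The analytic completion $g+i\tilde g$ yields a multivalued exterior map $\Phi_\delta$ with $|\Phi_\delta|=e^{g-\delta}$ and multiplicative periods $e^{2\pi i\omega_j}$ around the components, where $\omega_j=\mu_\sfE(\sfE_j)>0$ and $\sum_j\omega_j=1$; thus $|\capacity(\sfE^\delta)^n\Phi_\delta^{\,n}|=\capacity(\sfE^\delta)^n e^{n(g-\delta)}\le e^{-n\delta}\capacity(\sfE^\delta)^n\le\capacity(\sfE^\delta)^n$ on $\sfE^\delta$, and this is the natural object to turn into a polynomial. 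Two repairs are needed: first multiply $\Phi_\delta^{\,n}$ by a single-valued period-correcting factor $B_n$ on $\Omega_{\sfE^\delta}$ with $|B_n|=1$ on $\partial\sfE^\delta$ and with $\log\|B_n\|$, $\log\|B_n^{-1}\|$ bounded in terms of the $\omega_j$ alone — such a Widom--Blaschke factor exists precisely because $\Omega_{\sfE^\delta}$ is finitely connected, and this is the \emph{only} place where having $m>1$ components genuinely intervenes, contributing a bounded (not a growing) factor. Second, project $\capacity(\sfE^\delta)^n\Phi_\delta^{\,n}B_n$ onto polynomials of degree $n$ by a Cauchy integral over a level curve $\{g=\delta'\}$ with $0<\delta'<\delta$, and average that projection against a de la Vall\'ee Poussin / Dzjadyk-type kernel so as to absorb the wild oscillation of $\partial\sfE$; the cost of this last smoothing is its Lebesgue constant, which is $O(\log n)$. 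The outcome is a monic $P_n$ with $\|P_n\|_{\sfE^\delta}\le C(\delta)\,\capacity(\sfE^\delta)^n\log n$, where $C(\delta)$ encodes the length of $\{g=\delta'\}$ and the width of the shell $\{\delta'<g<\delta\}$; feeding in $\delta=\delta(n)\asymp 1/n$ and $\delta'=\delta/2$ then yields $\cW_n(\sfE)=O(\log n)$.

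\emph{Main obstacle.} Everything above is routine except the quantitative bookkeeping in the smoothing step, and this is where the whole difficulty sits: $\partial\sfE$ carries \emph{no} regularity, so the level curves $\{g=\delta\}$, analytic for each fixed $\delta>0$, degenerate uncontrollably as $\delta\to0$ — their lengths blow up, the shells between consecutive levels thin out — and the crude Faber estimate used to prove Theorem~\ref{thm:faber_asymptotics} is hopelessly lossy on the relevant scale $\delta\asymp1/n$. One must instead run Widom's construction with an error analysis that is explicit in $\delta$, relying on potential-theoretic regularity of $g$ (two-constants and Harnack estimates, and distortion bounds for the level curves away from the finitely many critical points, where quasiconformal techniques as in Theorem~\ref{thm:totik-widom-quasidisk} are unavailable globally) to keep the geometric constant $C(\delta)$ growing slowly enough that it is swallowed by the exponential gain $e^{-n\delta}$ from the enlargement, while the smoothing loss stays $O(\log n)$. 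Balancing these three competing quantities — the enlargement cost $e^{n\delta}$, the geometric blow-up $C(\delta)$, and the smoothing Lebesgue constant — against the single free parameter $\delta(n)$ is the technical heart of the argument, and it is exactly this balance that produces $O(\log n)$: a bounded Totik--Widom constant is not expected here (that would settle Pommerenke's problem for continua), yet the irregularity of $\partial\sfE$ is mild enough, being that of a \emph{finite} union of continua, to avoid any power of $n$.
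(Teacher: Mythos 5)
The paper states Theorem \ref{thm:andrievskii_general_totik_widom} without proof, simply citing \cite{andrievskii17}, so there is no in-paper argument to compare against and your proposal must stand on its own. Its preparatory reductions are correct and sensible: positivity of $\capacity(\sfE)$, affine invariance of $\cW_n$, the enlargement $\sfE^\delta=\{z:G_{\sfE}(z)\le\delta\}$ below the smallest critical value of the Green's function (legitimate here because a finite union of nondegenerate continua is regular and $G_{\sfE}$ has only finitely many critical points), the identity $\capacity(\sfE^\delta)=e^{\delta}\capacity(\sfE)$, and the resulting inequality $\cW_n(\sfE)\le e^{n\delta}\cW_n(\sfE^\delta)$ with $\delta\asymp1/n$.

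Beyond that point, however, there is a genuine gap, and you name it yourself: every step that actually produces the $O(\log n)$ bound is asserted rather than proved. The decisive claims --- that the period-correcting factor $B_n$ can be chosen with $\log\|B_n\|$ and $\log\|B_n^{-1}\|$ bounded uniformly as $\delta(n)\to0$, that a Cauchy-integral projection onto degree-$n$ polynomials followed by a ``de la Vall\'ee Poussin / Dzjadyk-type'' smoothing has Lebesgue constant $O(\log n)$ on a boundary with no regularity whatsoever, and that the geometric constant $C(\delta)$ encoding the lengths of the level curves $\{G_{\sfE}=\delta'\}$ and the widths of the shells stays controlled at the scale $\delta\asymp1/n$ --- are precisely the content of Andrievskii's theorem, and your closing paragraph concedes that balancing these quantities ``is the technical heart of the argument.'' An argument that defers its technical heart to an unspecified error analysis is an outline, not a proof. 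Two concrete worries: (i) de la Vall\'ee Poussin--type means are usually invoked because their Lebesgue constants are \emph{bounded}, whereas Dirichlet-type partial sums give $\log n$; you never identify which mechanism yields the $\log n$ nor why no power of $1/\delta(n)\asymp n$ leaks in from the degenerating geometry. (ii) For a completely irregular continuum the length of $\{G_{\sfE}=\delta'\}$ can grow like a power of $1/\delta'$, and nothing in the proposal shows that this growth is absorbed by the exponential gain $e^{-n\delta}$. Until these estimates are supplied, the theorem is not proved.
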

\vspace{-.5cm}
These results highlight a significant distinction between Chebyshev and Faber polynomials for sets lacking boundary regularity. In fact, Gaier \cite{gaier99}, building on an example by Clunie~\cite{clunie59}, demonstrated the existence of a quasicircle $\sfE$ such that there is a positive constant $\alpha$ for which the associated sequence of Faber polynomials $\{F_n^{\sfE}\}$ satisfies
\[\|F_{n_k}^{\sfE}\|_{\sfE}>n_k^\alpha\]
for an increasing sequence $n_k$. On the other hand, Theorem \ref{thm:totik-widom-quasidisk} demonstrates that $\sfE$ satisfies a Totik--Widom bound, while Theorem \ref{thm:andrievskii_general_totik_widom} shows that the growth rate $n^\alpha$ is not possible for Widom factors of single-component sets. We emphasize that the purported counterexample to connected compact sets satisfying Totik--Widom bounds, as claimed in \cite{pommerenke72}, is based on the very construction referenced in \cite{clunie59}. Andrievskii's result shows that this example does not provide the necessary counterexample.

\subsection{Chebyshev and Faber polynomials relative to equipotential curves}	
	\label{subs:equipotential}
	We have previously seen in Section \ref{subs:cheb_curve} that if \(\sfE\) is a \(C^{1+\alpha}\) Jordan curve then the Faber polynomials,~\(F_n^{\sfE}\) will have asymptotically small norms as \(n\to\infty\). Thus providing good substitutes for Chebyshev polynomials of large degrees. Furthermore, under sufficient smoothness assumptions on the Jordan curve, one can actually establish
	\[\lim_{n\to\infty}\max_{z\in \sfE}|T_n^{\sfE}(z)-F_n^{\sfE}(z)| = 0\]
	see e.g. \cite[Theorem 8.3]{widom69}. Thus Chebyshev polynomials and Faber polynomials get measurably close for increasing degrees.
	
	There is another relation between Chebyshev polynomials and Faber polynomials that was originally studied by Faber in \cite{faber20}, where the degree is fixed but where the set instead varies.

	To explain this let \(\sfE\subset \C\) be a compact set with positive logarithmic capacity and denote
	\[\sfE(r) = \{z:G_{\sfE}(z) = \log r\}.\]
	For sufficiently large values of \(r\), the set \(\sfE(r)\) will form an analytic Jordan curve and consequently its unbounded complement \(\Omega_{\sfE(r)}\) will be simply connected with respect to the Riemann sphere. The Green function corresponding to \(\sfE(r)\) is given by
	\[G_{\sfE(r)}(z) = G_{\sfE}(z)-\log r.\]
	For large enough \(r>1\) the associated exterior conformal map \(\Phi_r\) fulfilling \eqref{eq:canonical_conformal_map} satisfies 
	\[\log |\Phi_r(z)| = G_{\sfE(r)}(z)=G_{\sfE}(z)-\log r\]
	by~\eqref{eq:green_func_conformal_map} and consequently, any two such maps, corresponding to different values of \(r\), are related by a multiplicative factor. It follows that the monic Faber polynomial, denoted
	\[\mathcal{F}_n^{\sfE}(z)\coloneqq \Phi_r'(\infty)^{-n}F_n^{\sfE(r)}(z)\]
	is independent of \(r\).
	
	If \(P\) is a monic polynomial of degree \(m\) and \(\sfE(r) = \{z:|P(z)| = r^m\}\),  where \(\sfE = \sfE(1)\), then we already saw in Theorem \ref{thm:chebyshev_on_lemnsicate} that
	\[T_{nm}^{\sfE(r)}(z) = P(z)^{n}.\]
	It can be shown that the exterior conformal map satisfying \eqref{eq:canonical_conformal_map}, is given by \[\Phi(z) = \frac{P(z)^{1/m}}{r},\]
	and consequently \(\mathcal{F}_{nm}^{\sfE}(z) = P(z)^{n}.\)
	This shows that the relation
	\begin{equation}
		T_{nm}^{\sfE(r)}(z) = \mathcal{F}_{nm}^{\sfE}
		\label{eq:cheb_equal_faber_lemniscate}
	\end{equation}
	holds for any value of \(r>0\). The equality in \eqref{eq:cheb_equal_faber_lemniscate} served as an example for Faber in \cite{faber20} to motivate that these families of polynomials may be related on equipotential curves. In fact, Faber could demonstrate that this remarkable relation holds true on an interval.
	From \eqref{eq:faber_cheb_equal}, we see that
	\[T_n^{[-1,1]} = \mathcal{F}_n^{[-1,1]},\]
	and this equality persists on all equipotential curves of \([-1,1]\).
	\begin{theorem}[Faber (1920) \cite{faber20}]
		Let \(\sfE = [-1,1]\). Then, for any \(r>1\),
		\[T_n^{\sfE} = T_n^{\sfE(r)} = \mathcal{F}_n^{\sfE}.\]
	\end{theorem}
	
	The fact that Chebyshev polynomials may remain constant on equipotential curves has subsequently been studied in a number of special cases.
	\begin{theorem}[Peherstorfer (1996) \cite{peherstorfer96}]
		Let \(P\) be a polynomial of degree \(m\), and let \(\sfE = P^{-1}([-1,1])\). Then
		\[T_{nm}^{\sfE} = T_{nm}^{\sfE(r)}\]
		for every \(n\in \N\) and \(r>1\).
	\end{theorem}
	Actually, the first extension of Faber's original result was established by Fischer in \cite{fischer92} in the case where \(P^{-1}([-1,1])\) consists of exactly two intervals, see also \cite{christiansen-simon-zinchenko-IV}. Subsequent research shows that the interval can be replaced by any compact set which satisfies the property that the Chebyshev polynomial of degree \(n\) is minimal for all equipotential curves, see \cite{bloom-calvi00, peherstorfer-steinbauer01}. Another example is provided by Julia sets of quadratic polynomials.
	
	\begin{theorem}[Stawiska (1996) \cite{stawiska96}]
		If \(\sfE = J_{P_{\lambda}}\) is the Julia set of the polynomial \(P_\lambda(z) = (z-\lambda)^2\) for \(0\leq \lambda \leq 2\), then
		\[T_{2^n}^{\sfE} = T_{2^{n}}^{\sfE(r)}\]
		holds for all \(n\in \N\) and \(r>1\).
	\end{theorem} 
	
	It turns out that it is always the case that \(T_n^{\sfE(r)}\) stabilizes for large \(r\) and approaches the corresponding Faber polynomial as the following shows.
	\begin{theorem}[\cite{minadiaz-rubin25}]
	Let \(\sfE\subset \C\) be a compact set with positive capacity, and \(n\in \N\). Then
	\[\lim_{r\to \infty}T_n^{\sfE(r)} = \mathcal{F}_n^{\sfE}.\]
	\label{thm:minadiaz}
	\end{theorem}
	\vspace{-.5cm}
	Furthermore, the proof establishes that the convergence is rapid in the sense that
	\[\max_{z\in \sfE(r)}|T_n^{\sfE(r)}(z)-\mathcal{F}_n^{\sfE}(z)| = O(r^{-1}), \qquad t\to \infty.\]
	This result highlights yet another close relation between Faber polynomials and Chebyshev polynomials. One advantage of establishing such relations is that Faber polynomials can be analyzed through the well-developed theory of conformal maps, see e.g.~\cite{pommerenke72}, and are therefore comparatively well understood.

\subsection{The zeros of Chebyshev polynomials}
To conclude this section, we turn our interest to the behavior of the zeros of Chebyshev polynomials. For a fixed degree $n$, almost nothing is known about the precise location of the zeros of $T_n^{\sfE}$. Many times, the asymptotical zero distribution is the interesting object to study. The following result constitutes an exception.

\begin{theorem}[Fej\'{e}r (1922)	 \cite{fejer22}]
	Let $\sfE\subset \C$ be a compact set and $w:\sfE\rightarrow [0,\infty)$ a non-negative weight function which is non-zero on at least $n$ points. All zeros of $T_n^{\sfE,w}$ lie in $\operatorname{cvh}(\supp (w))$ -- the convex hull of the support of the weight function.
	\label{thm:fejer_zeros_convex_hull}
\end{theorem}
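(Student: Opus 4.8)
The plan is the classical Fej\'er argument: assume that $T_n^{\sfE,w}$ has a zero $z_0$ outside the convex set $K:=\operatorname{cvh}(\supp w)$, factor it out, and replace the linear factor $(z-z_0)$ by a linear factor $(z-z_0^\ast)$ with $z_0^\ast\in K$ chosen so that the resulting monic polynomial of the same degree has \emph{no larger} weighted sup-norm. By the uniqueness part of Theorem~\ref{thm:chebyshev_existence_uniqueness} the new polynomial must then coincide with $T_n^{\sfE,w}$, forcing $z_0^\ast=z_0$, which is absurd. The point to keep in mind throughout is that $\|w\,\cdot\,\|_\sfE$ only ``sees'' points where $w>0$, and all of those lie in $\supp w\subseteq K$.

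First I would record the convex-geometric input. Since $\supp w$ is a closed subset of the compact set $\sfE$ it is compact, and hence so is $K$. If $z_0\notin K$, let $z_0^\ast\in K$ be a point of $K$ nearest to $z_0$; I claim
\[
|z-z_0^\ast|<|z-z_0|\qquad\text{for every }z\in K .
\]
This is the standard variational inequality for the nearest-point projection onto a convex set: for $t\in[0,1]$ the point $z_0^\ast+t(z-z_0^\ast)$ lies in $K$, so minimality of $|z_0-z_0^\ast|$ and differentiation at $t=0^+$ give $\realpart\big((z_0-z_0^\ast)\overline{(z-z_0^\ast)}\big)\le 0$. Expanding $|z-z_0|^2$ around $z_0^\ast$ then yields $|z-z_0|^2\ge |z-z_0^\ast|^2+|z_0-z_0^\ast|^2>|z-z_0^\ast|^2$, because $z_0\notin K$ forces $z_0^\ast\neq z_0$.

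Now the perturbation. Suppose $T_n^{\sfE,w}$ has a zero $z_0\notin K$ and write $T_n^{\sfE,w}(z)=(z-z_0)Q(z)$ with $Q$ monic of degree $n-1$ (so $Q\not\equiv 0$). Set $\tilde T(z):=(z-z_0^\ast)Q(z)$, again monic of degree $n$. For every $z\in\sfE$ with $w(z)>0$ we have $z\in\supp w\subseteq K$, so the geometric inequality gives
\[
w(z)\,|\tilde T(z)|=w(z)\,|z-z_0^\ast|\,|Q(z)|\le w(z)\,|z-z_0|\,|Q(z)|=w(z)\,\big|T_n^{\sfE,w}(z)\big|\le t_n(\sfE,w),
\]
and at points with $w(z)=0$ both sides vanish; hence $\|w\tilde T\|_\sfE\le t_n(\sfE,w)$. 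As $\tilde T$ is monic of degree $n$ the reverse inequality is automatic, so $\tilde T$ is also a minimizer, and Theorem~\ref{thm:chebyshev_existence_uniqueness} gives $\tilde T=T_n^{\sfE,w}$, i.e.\ $(z-z_0^\ast)Q(z)=(z-z_0)Q(z)$ identically. Since $Q\not\equiv 0$ this forces $z_0^\ast=z_0$, contradicting $z_0^\ast\in K$ and $z_0\notin K$. Thus every zero of $T_n^{\sfE,w}$ lies in $K$.

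I do not expect a serious obstacle here: the only non-bookkeeping ingredient is the convex-geometry lemma on nearest-point projections, together with the (easy but essential) observation that the weighted norm is insensitive to the region where $w$ vanishes. One remark: if $w$ is nonzero at exactly $n$ points the minimizer is simply the monic polynomial vanishing at those $n$ points and the assertion is immediate, so one may assume $t_n(\sfE,w)>0$; in that case one can even bypass the uniqueness theorem, since on the compact set $K$ one has $|z-z_0^\ast|/|z-z_0|\le\lambda<1$ for some $\lambda$, whence $\|w\tilde T\|_\sfE\le\lambda\,t_n(\sfE,w)<t_n(\sfE,w)$, directly contradicting minimality.
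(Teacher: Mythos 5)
Your proposal is correct and follows essentially the same route as the paper: factor out a zero $z_0$ of $T_n^{\sfE,w}$ lying outside $\operatorname{cvh}(\supp w)$ and replace it by a projected point that is strictly closer to every point of the support, the paper using the foot of the perpendicular onto a separating (Hahn--Banach) line where you use the nearest point of the convex hull itself. Your way of closing the argument --- via the uniqueness theorem, or via the uniform contraction factor $\lambda<1$ in your final remark --- is in fact slightly tidier than the paper's, which passes from a pointwise strict inequality directly to a strictly smaller supremum norm.
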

\begin{proof}
	If $w$ has precisely $n$ points in its support then the Chebyshev polynomial is uniquely determined to be the polynomial with all its zeros coinciding with the supporting set. 
	
	We consider the case where $w$ has at least $n+1$ points of support. In order to obtain a contradiction, assume that $T_n^{\sfE,w}(z) = \prod_{k=1}^{n}(z-a_k)$ and that $a_1\notin \operatorname{cvh}(\supp(w))$. The separating hyperplane theorem, see e.g. \cite{boyd-vandenberghe04}, tells us that there exists a line \(L\) which decomposes~$\C\setminus L$ into two connected components, one containing $\supp(w)$ and one containing $a_1$. If $a_1^\ast$ denotes the orthogonal projection of $a_1$ onto $L$ then 
	\[|z-a_1^\ast|<|z-a_1|\]
	holds for every $z\in \supp(w)$. Consequently 
	\[w(z)|z-a_1^\ast|\prod_{k=2}^{n}|z-a_k|\leq w(z)\prod_{k=1}^{n}|z-a_k|= w(z)|T_n^{\sfE,w}(z)|\]
	with strict inequality on $\supp(w)\setminus\{a_k\mid k=2,\dotsc,n\}$. Since $w$ contains at least $n+1$ points in its support we conclude that $(z-a_1^\ast)\prod_{k=2}^{n}(z-a_k)$ is a monic polynomial of degree $n$ with smaller weighted supremum norm than $T_n^{\sfE,w}$ which is a contradiction.
\end{proof}

The remaining results we consider in this section provide information on the asymptotical behavior of the zeros of the Chebyshev polynomials as the degree goes to infinity. The first study with this flavor was performed on partial sums of Taylor series by Jentzsch in \cite{jentzsch16} and later substantially extended by Szeg\H{o} in \cite{szego22}. They were both interested in describing the zero distribution of partial sums of power series of analytic functions. 

If $P$ is a polynomial of exact degree $n$ with zeros at $z_1,\dotsc,z_n$, counting multiplicity, then we define the normalized zero counting measure of $P$ as the probability measure
\begin{equation}
	\nu(P) = \frac{1}{n}\sum_{j=1}^{n}\delta_{z_j}
	\label{eq:zero_counting_measure}
\end{equation}
where $\delta_{z}$ denotes the Dirac measure at $z$.

\begin{theorem}[Jentzsch {\cite{jentzsch16}}, Szeg\H{o} {\cite{szego22}}]
Let
\begin{equation}
f(z)=\sum_{k=0}^{\infty} a_k z^k
\label{eq:power_series}
\end{equation}
be a power series with radius of convergence $0<r<\infty$, and let
\[
P_n(z)=\sum_{k=0}^{n} a_k z^k
\]
denote its $n$th partial sum.  
Then there exists a subsequence of degrees $\{n_m\}_{m}$ such that
\[
\nu(P_{n_m}) \xrightarrow{*} \frac{d\theta}{2\pi}\Big|_{\{\,|z|=r\,\}}
\]
as $m\to\infty$.
\label{thm:jentzsch-szego}
\end{theorem}
To be precise concerning the accreditation of this result, Jentzsch showed in \cite{jentzsch16} that every point of $\{z: |z| = r\}$ was a limit point of the zeros of the partial sums. Szeg\H{o} extended Jentzsch' result in \cite{szego22} by showing that for a subsequence of $\{P_n\}$ the corresponding zeros distribute in an equidistributed fashion in any circular sector with respect to the corresponding angle and that these zeros approach the circle determined by the radius of convergence~$|z| = r$. See also \cite[\S 2.1]{andrievskii-blatt01} for a potential theoretic proof of Theorem \ref{thm:jentzsch-szego}. The proof stated there is based on \cite[Theorem 2.1.1]{andrievskii-blatt01} a simplification dealing with Chebyshev polynomials that we now formulate.

\begin{theorem}[Blatt, Saff \& Simkani (1988) \cite{blatt-saff-simkani88}]
	Let $\sfE\subset \C$ be a compact set with $\capacity(\sfE)>0$ such that the unbounded component $\Omega_{\sfE}$ of $\C\setminus \sfE$ has a boundary which is regular (in the sense of potential theory).	
	If
	\begin{equation}
		\nu(T_{n}^{\sfE})(A)\rightarrow 0
		\label{eq:interior_zero_condition}
	\end{equation}
	as $n\rightarrow \infty$ for any closed set $A$ contained in the union of the bounded components of $\C\setminus \overline{\Omega}_{\sfE}$ then
	\begin{equation}
		\nu(T_n^{\sfE})\xrightarrow{\ast}\mu_{\sfE}
	\end{equation}
	as $n\rightarrow \infty$.
	\label{thm:blatt-saff-simkani-zeros-inside}
\end{theorem}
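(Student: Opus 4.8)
Since $\sfE$ is non-polar it is infinite, so Fej\'er's theorem (Theorem \ref{thm:fejer_zeros_convex_hull} with $w\equiv 1$) shows that every zero of $T_n^{\sfE}$ lies in the fixed compact set $\operatorname{cvh}(\sfE)$. Hence the zero counting measures $\nu_n:=\nu(T_n^{\sfE})$ all lie in the weak-$*$ compact set of probability measures supported in $\operatorname{cvh}(\sfE)$, and it suffices to prove that every weak-$*$ limit point of $\{\nu_n\}$ equals $\mu_{\sfE}$. Fix such a limit point $\nu$, attained along a subsequence $\{\nu_{n_k}\}$. Throughout I use that, since $T_n^{\sfE}$ is monic of degree $n$, $U^{\nu_n}(z)=-\tfrac1n\log|T_n^{\sfE}(z)|$.

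The first goal is the lower bound $U^{\nu}\ge U^{\mu_{\sfE}}$ on all of $\C$. Let $G_{\sfE}^{+}$ denote $G_{\sfE}$ extended by $0$ to $\C\setminus\Omega_{\sfE}$; regularity of $\sfE$ makes $G_{\sfE}^{+}$ continuous, and combining \eqref{eq:greens_function_from_potential} with Frostman's theorem on the regular set $\sfE$ yields $U^{\mu_{\sfE}}=\cE(\mu_{\sfE})-G_{\sfE}^{+}$ on all of $\C$. The Bernstein--Walsh inequality together with the maximum principle gives $|T_n^{\sfE}(z)|\le t_n(\sfE)\exp\big(nG_{\sfE}^{+}(z)\big)$ for every $z$, hence $U^{\nu_n}(z)\ge -\tfrac1n\log t_n(\sfE)-G_{\sfE}^{+}(z)$. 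Letting $n\to\infty$ and using Theorem \ref{thm:ffs-theorem} (so that $-\tfrac1n\log t_n(\sfE)\to -\log\capacity(\sfE)=\cE(\mu_{\sfE})$) gives $\liminf_n U^{\nu_n}(z)\ge U^{\mu_{\sfE}}(z)$ for every $z$. By the lower envelope theorem for logarithmic potentials, $\liminf_k U^{\nu_{n_k}}(z)=U^{\nu}(z)$ for quasi-every $z$, so $U^{\nu}\ge U^{\mu_{\sfE}}$ quasi-everywhere; both sides being superharmonic, the inequality holds everywhere.

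Next I locate the support of $\nu$. Integrating against $\mu_{\sfE}$ and using Fubini's theorem, $\int U^{\mu_{\sfE}}\,d\nu=\int U^{\nu}\,d\mu_{\sfE}\ge \int U^{\mu_{\sfE}}\,d\mu_{\sfE}=\cE(\mu_{\sfE})$, whereas Frostman's theorem ($U^{\mu_{\sfE}}\le\cE(\mu_{\sfE})$ everywhere, with equality precisely on $\C\setminus\Omega_{\sfE}$ by regularity) forces $\int U^{\mu_{\sfE}}\,d\nu\le\cE(\mu_{\sfE})$. Thus $U^{\mu_{\sfE}}=\cE(\mu_{\sfE})$ holds $\nu$-almost everywhere, which means $\nu(\Omega_{\sfE})=0$. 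Now the hypothesis \eqref{eq:interior_zero_condition} enters: if $A$ is a closed set contained in a bounded component $V$ of $\C\setminus\overline{\Omega}_{\sfE}$ then $\nu_{n_k}(A)\to 0$, and since $\liminf_k\nu_{n_k}(\operatorname{int}A)\ge\nu(\operatorname{int}A)$ we get $\nu(\operatorname{int}A)=0$; covering $V$ by interiors of such closed sets shows $\nu(V)=0$. Hence $\nu$ charges neither $\Omega_{\sfE}$ nor any bounded component of $\C\setminus\overline{\Omega}_{\sfE}$, so $\supp\nu\subseteq\partial\Omega_{\sfE}$.

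To finish, observe that since $\supp\nu\subseteq\partial\Omega_{\sfE}$ the potential $U^{\nu}$ is harmonic on $\Omega_{\sfE}$, so $h:=U^{\nu}-U^{\mu_{\sfE}}$ is harmonic there, nonnegative by the second step, tends to $0$ at $\infty$, and, by regularity, satisfies $h(\zeta)=0$ and $\liminf_{z\to\zeta}h(z)\ge 0$ for quasi-every $\zeta\in\partial\Omega_{\sfE}$; the maximum principle (valid modulo a polar boundary set) gives $h\equiv 0$ on $\overline{\Omega}_{\sfE}$. On each bounded component of $\C\setminus\overline{\Omega}_{\sfE}$ the function $U^{\nu}$ is harmonic with boundary values equal to $\cE(\mu_{\sfE})=U^{\mu_{\sfE}}$ quasi-everywhere, hence equals $U^{\mu_{\sfE}}$ there too. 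Therefore $U^{\nu}\equiv U^{\mu_{\sfE}}$ on $\C$, and the uniqueness of a measure with prescribed logarithmic potential \cite[Theorem 3.7.4]{ransford95} yields $\nu=\mu_{\sfE}$. The delicate points are the two soft potential-theoretic inputs — the lower envelope theorem and the maximum principle modulo polar exceptional sets — and, more substantively, the step $\supp\nu\subseteq\partial\Omega_{\sfE}$, where \eqref{eq:interior_zero_condition} is genuinely needed: the equilibrium potential is constant on all of $\C\setminus\Omega_{\sfE}$ and cannot by itself separate $\partial\Omega_{\sfE}$ from the bounded complementary components, and without that hypothesis the conclusion indeed fails (for an annulus the zeros of $T_n^{\sfE}$ collapse to the centre).
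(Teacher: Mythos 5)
Your argument is correct in outline and genuinely different from the paper's. The paper reaches the conclusion through three intermediate results: Theorem \ref{thm:nbr_of_zeros_complement} (zeros escape closed subsets of $\Omega_{\sfE}$, proved with Green's functions at the zeros), Theorem \ref{thm:saff_totik_chebyshev_asymptotics_unbounded_complement} (strong $n$th-root asymptotics off the convex hull, via Harnack), and the Mhaskar--Saff balayage identity of Theorem \ref{thm:balayge-equality}; it then finishes with the energy estimate $\cE(\mu_\infty)\le-\log\capacity(\sfE)$ and uniqueness of the energy minimizer. You instead get the potential inequality $U^{\nu}\ge U^{\mu_{\sfE}}$ directly from Bernstein--Walsh, Theorem \ref{thm:ffs-theorem} and the lower envelope theorem, locate the support by a Frostman/Fubini argument (a neat replacement for Theorem \ref{thm:nbr_of_zeros_complement} at the level of limit measures), and finish with uniqueness of measures having a prescribed potential. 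This is more self-contained --- it needs neither Theorem \ref{thm:saff_totik_chebyshev_asymptotics_unbounded_complement} nor Theorem \ref{thm:balayge-equality} --- at the cost of importing the lower envelope theorem, which the paper never invokes.

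One step is not justified as written: on a bounded component $V$ of $\C\setminus\overline{\Omega}_{\sfE}$ you assert that the harmonic function $U^{\nu}\vert_{V}$ has boundary values $\cE(\mu_{\sfE})$ quasi-everywhere on $\partial V$. From your second step you only have the lower bound $U^{\nu}\ge\cE(\mu_{\sfE})$ on $V$; since potentials are merely lower semicontinuous, nothing established so far controls $\limsup_{z\to\zeta}U^{\nu}(z)$ from inside $V$, and a positive harmonic function on $V$ strictly exceeding $\cE(\mu_{\sfE})$ is not excluded without an upper bound. The fix is short: since $h\equiv 0$ on $\Omega_{\sfE}$ (which, incidentally, already follows from the minimum principle applied at the interior point $\infty$ --- the boundary discussion there is superfluous), lower semicontinuity of $U^{\nu}$ together with regularity gives $U^{\nu}(\zeta)\le\liminf_{z\to\zeta,\,z\in\Omega_{\sfE}}U^{\mu_{\sfE}}(z)=\cE(\mu_{\sfE})$ for every $\zeta\in\partial\Omega_{\sfE}\supseteq\supp\nu$; the maximum principle for potentials then yields $U^{\nu}\le\cE(\mu_{\sfE})$ on all of $\C$, which supplies the missing upper bound on $V$. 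Alternatively, the same inequality on $\supp\nu$ gives $\cE(\nu)\le\cE(\mu_{\sfE})$ directly, and you can conclude by uniqueness of the energy minimizer --- which is exactly how the paper ends.
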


In the special case where the bounded components of the complement are void, the following result trivially follows from Theorem \ref{thm:blatt-saff-simkani-zeros-inside}.
\begin{corollary}[Blatt, Saff \& Simkani (1988) \cite{blatt-saff-simkani88}]
	Suppose that $\sfE\subset \C$ is a compact set with $\capacity(\sfE)>0$, connected complement, regular boundary and empty interior. Then
	\begin{equation}
		\nu(T_n^{\sfE})\xrightarrow{\ast}\mu_{\sfE},\quad \text{ as} \quad n\rightarrow \infty.
	\end{equation}
\end{corollary}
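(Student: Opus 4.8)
The plan is to obtain this as an immediate specialization of Theorem~\ref{thm:blatt-saff-simkani-zeros-inside}, so that the only work is to verify its hypotheses and, in particular, to see that the interior-zero condition~\eqref{eq:interior_zero_condition} holds for a trivial reason. First I would translate the two topological assumptions on $\sfE$. Since $\C\setminus\sfE$ is connected and unbounded (it contains a neighborhood of infinity, $\sfE$ being compact), it has a single component, which is therefore the unbounded component $\Omega_{\sfE}$; thus $\Omega_{\sfE}=\C\setminus\sfE$. Since $\sfE$ has empty interior, $\C\setminus\sfE$ is dense in $\C$, whence $\overline{\Omega}_{\sfE}=\C$ and therefore $\C\setminus\overline{\Omega}_{\sfE}=\emptyset$. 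In other words, $\overline{\Omega}_{\sfE}$ has no bounded complementary components at all.

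Next I would note that condition~\eqref{eq:interior_zero_condition} then holds vacuously: the union of the bounded components of $\C\setminus\overline{\Omega}_{\sfE}$ is empty, so the only closed set $A$ contained in it is $A=\emptyset$, and $\nu(T_n^{\sfE})(\emptyset)=0$ for every $n$ because $\nu(T_n^{\sfE})$ is a probability measure. The remaining hypotheses of Theorem~\ref{thm:blatt-saff-simkani-zeros-inside} are $\capacity(\sfE)>0$, which is assumed, and regularity of $\partial\Omega_{\sfE}$ in the sense of potential theory, which we likewise take to be in force (note that under empty interior one has $\sfE=\partial\Omega_{\sfE}$, so this is nothing but regularity of the compact set $\sfE$ itself). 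Invoking Theorem~\ref{thm:blatt-saff-simkani-zeros-inside} then yields $\nu(T_n^{\sfE})\xrightarrow{\ast}\mu_{\sfE}$ as $n\to\infty$, which is the claim.

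There is no genuine obstacle here: the substantive content is entirely contained in Theorem~\ref{thm:blatt-saff-simkani-zeros-inside}, and this corollary is precisely the case in which its hypothesis~\eqref{eq:interior_zero_condition} carries no information. The one step deserving a careful sentence is the deduction that "$\C\setminus\sfE$ connected together with $\sfE$ having empty interior'' forces $\overline{\Omega}_{\sfE}$ to have no bounded complementary components, since it is exactly this that suppresses the delicate part of the parent theorem's hypothesis.
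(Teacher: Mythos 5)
Your argument is correct and is precisely the paper's: the paper offers no separate proof, merely remarking that the corollary ``trivially follows'' from Theorem~\ref{thm:blatt-saff-simkani-zeros-inside} because the bounded components of the complement are void, and your proposal simply spells out why (connected complement forces $\Omega_{\sfE}=\C\setminus\sfE$, empty interior forces $\overline{\Omega}_{\sfE}=\C$, so condition~\eqref{eq:interior_zero_condition} is vacuous). Your parenthetical that the regularity hypothesis of the parent theorem must still be imported --- and reduces here to regularity of $\sfE=\partial\Omega_{\sfE}$ itself --- is a point the corollary's statement silently omits, so flagging it is more careful than the paper, not a deviation from it.
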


We intend to prove Theorem \ref{thm:blatt-saff-simkani-zeros-inside} by combining recent simplified proofs. As a first step, we show that the support of any limit measure of $\nu(T_n^{\sfE})$ is contained in $\partial \Omega_{\sfE}$. It is always so that most zeros of $T_n^{\sfE}$ approach the so-called \textit{polynomially convex hull} of $\sfE$, with this we mean the set \(\C\setminus\Omega_{\sfE}\).

\begin{theorem}[Blatt, Saff \& Simkani (1988) \cite{blatt-saff-simkani88}]
	Let $\sfE\subset \C$ be a compact set with $\capacity(\sfE)>0$ then
	\[\nu(T_n^{\sfE})(A)\rightarrow 0\]
	for any closed subset $A$ in the unbounded component $\Omega_{\sfE}$ of $\C\setminus\sfE$.
	\label{thm:nbr_of_zeros_complement}
\end{theorem}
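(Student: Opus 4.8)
The plan is to argue by contradiction: suppose that along some subsequence a positive proportion of the zeros of $T_n^{\sfE}$ stay at distance at least $\varepsilon$ from $\sfE$, inside a fixed closed subset $A\subset \Omega_{\sfE}$. Concretely, assume there is a compact set $A\subset\Omega_{\sfE}$, a number $\delta>0$, and a subsequence $\{n_k\}$ such that $\nu(T_{n_k}^{\sfE})(A)\geq \delta$ for all $k$. First I would factor the Chebyshev polynomial as $T_n^{\sfE}(z) = R_n(z)S_n(z)$, where $R_n$ collects the zeros lying in $A$ (say $R_n$ has degree $m_n$ with $m_n/n\geq\delta$) and $S_n$ collects the remaining zeros. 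The idea is that the factor $R_n$ is ``wasteful'': its zeros sit out in the unbounded component where $|R_n|$ is forced to be large on $\sfE$, so one can replace $R_n$ by a better monic factor of the same degree and strictly decrease the sup-norm on $\sfE$, contradicting the minimality that defines $T_n^{\sfE}$.

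To make this precise I would use potential theory. By passing to a further subsequence, assume $\nu(R_{n_k})\xrightarrow{\ast}\sigma$ for some probability measure $\sigma$ supported in $A$, and $\tfrac{1}{n_k}\deg R_{n_k}\to c\geq\delta$. On $\sfE$ one has $\tfrac1{n}\log|R_n(z)| \to c\int\log|z-\zeta|\,d\sigma(\zeta) =: -cU^{\sigma}(z)$ uniformly (the limit potential is continuous since $\sigma$ is supported away from $\sfE$), and because every point of $A$ lies in $\Omega_{\sfE}$ where $G_{\sfE}>0$, the Frostman-type inequality combined with $G_{\sfE}(z) = \cE(\mu_{\sfE}) - U^{\mu_{\sfE}}(z)$ gives $-U^{\sigma}(z) \geq -\cE(\mu_{\sfE}) + \eta$ on $\sfE$ for some fixed $\eta>0$ (since $U^{\sigma}(z) = \int(U^{\mu_{\sfE}}(z) - G_{\sfE}(\zeta) - \log\capacity(\sfE) + \text{error})$—more cleanly, $\int G_{\sfE}\,d\sigma>0$ and one transfers this to a strict gap between the two potentials on $\sfE$). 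Hence on $\sfE$, asymptotically $|R_{n_k}(z)|^{1/n_k}\geq \capacity(\sfE)^{c}e^{c\eta}(1+o(1))$, strictly more than the ``fair share'' $\capacity(\sfE)^c$. Meanwhile $\|S_{n_k}\|_{\sfE}^{1/n_k}\geq \capacity(\sfE)^{1-c}$ by Szeg\H{o}'s inequality (Theorem~\ref{thm:szego_inequality}) applied to $S_{n_k}$, so multiplying would already force $\|T_{n_k}^{\sfE}\|_{\sfE}^{1/n_k}\geq \capacity(\sfE)e^{c\eta}(1+o(1))$, contradicting $t_n(\sfE)^{1/n}\to\capacity(\sfE)$ from Theorem~\ref{thm:ffs-theorem}.

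The delicate point—and the main obstacle—is the lower bound $\|S_{n_k}\|_{\sfE}^{1/n_k}\geq\capacity(\sfE)^{1-c}$: this is exactly Szeg\H{o}'s inequality for a monic polynomial of degree $(1-c+o(1))n_k$, which is fine, but one must be careful that the ``error'' from the mismatch of degrees ($m_n/n$ not exactly $c$) is controlled, and that the uniform convergence $\tfrac1n\log|R_n|\to -cU^{\sigma}$ on $\sfE$ genuinely holds—this uses the regularity hypothesis on $\partial\Omega_{\sfE}$ only mildly here, really just that $\sigma$ is supported in the closed set $A\subset\Omega_{\sfE}$ which is a positive distance from $\sfE$, so the logarithmic potentials in play are continuous. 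One must also handle the possibility that $\capacity(\sfE_{n_k})$ fluctuates; this is absorbed by working with $n$th roots and taking $\limsup$/$\liminf$. Alternatively, and perhaps more cleanly for the write-up, instead of Szeg\H{o} on $S_n$ one can directly compare $T_{n}^{\sfE}$ to the monic polynomial $\widetilde{T}_n := Q_n\, S_n$ where $Q_n$ is a monic degree-$m_n$ polynomial with zeros placed according to the equilibrium measure $\mu_{\sfE}$ (a ``Fekete-type'' replacement), for which $\|Q_n\|_{\sfE}^{1/n}\to\capacity(\sfE)^{c}$, and derive $\|\widetilde T_{n_k}\|_{\sfE}<\|T_{n_k}^{\sfE}\|_{\sfE}$ for large $k$, the desired contradiction with minimality. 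Either route reduces to the same potential-theoretic gap estimate $\int G_{\sfE}\,d\sigma>0$, which is where the hypothesis $A\subset\Omega_{\sfE}$ does the essential work.
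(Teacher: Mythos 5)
The factorization strategy founders on one concrete step: the claimed pointwise bound $-U^{\sigma}(z)\geq -\cE(\mu_{\sfE})+\eta$ for \emph{all} $z\in\sfE$ is false in general. What is true is the averaged version: for $\zeta\in A$ one has $-U^{\mu_{\sfE}}(\zeta)=G_{\sfE}(\zeta)+\log\capacity(\sfE)\geq \log\capacity(\sfE)+c_0$ with $c_0=\min_A G_{\sfE}>0$, so by Fubini the \emph{$\mu_{\sfE}$-average} of $-U^{\sigma}$ over $\sfE$ exceeds $\log\capacity(\sfE)$ by at least $c_0$. Pointwise the inequality can fail badly: take $\sfE=\{z:|z|\leq 3\}$ and $\sigma=\delta_{4}$; then at $z=3\in\sfE$ one gets $-U^{\sigma}(3)=\log|3-4|=0<\log 3=-\cE(\mu_{\sfE})$. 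Without a uniform lower bound for $|R_{n_k}|$ on $\sfE$ you cannot ``multiply'' with Szeg\H{o}'s inequality for $S_{n_k}$: the only multiplicative lower bound of that shape is $\|R_nS_n\|_{\sfE}\geq\bigl(\inf_{\sfE}|R_n|\bigr)\|S_n\|_{\sfE}$, and the infimum is exactly what you do not control. The Fekete-replacement variant has the same defect, since showing $\|Q_nS_n\|_{\sfE}<\|R_nS_n\|_{\sfE}$ again requires $|R_n|$ to be uniformly large on $\sfE$ (at least at the point where the competitor peaks). Also, the theorem as stated carries no regularity hypothesis on $\partial\Omega_{\sfE}$, so nothing may be drawn from one.

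The averaged inequality you do have already suffices, provided you drop the factorization and integrate the whole polynomial against $\mu_{\sfE}$. Writing $w_1,\dotsc,w_n$ for the zeros of $T_n^{\sfE}$, Frostman's inequality $U^{\mu_{\sfE}}\leq\cE(\mu_{\sfE})$ on all of $\C$ together with $-U^{\mu_{\sfE}}=G_{\sfE}+\log\capacity(\sfE)$ on $\Omega_{\sfE}$ gives
\[
\log\|T_n^{\sfE}\|_{\sfE}\;\geq\;\int_{\sfE}\log|T_n^{\sfE}|\,d\mu_{\sfE}\;=\;\sum_{j=1}^{n}\bigl(-U^{\mu_{\sfE}}(w_j)\bigr)\;\geq\; n\log\capacity(\sfE)+\sum_{w_j\in\Omega_{\sfE}}G_{\sfE}(w_j),
\]
whence $\int_{\Omega_{\sfE}}G_{\sfE}\,d\nu(T_n^{\sfE})\leq\frac1n\log\cW_n(\sfE)\rightarrow 0$ by Theorem~\ref{thm:ffs-theorem}, and the conclusion follows from $G_{\sfE}\geq c>0$ on the compact set $A$ --- no subsequences and no contradiction needed. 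This corrected route is a genuine alternative to the paper's proof, which derives the same key inequality by applying the maximum principle on $\Omega_{\sfE}$ to the harmonic function $\frac1n\log|T_n^{\sfE}|+U^{\mu_{\sfE}}+\frac1n\sum_k G_{\Omega_{\sfE}}(\cdot,z_{k,n})$ and evaluating at infinity; as written, however, your argument has a genuine gap at the pointwise potential estimate.
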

\begin{proof}
	We define the sequence of functions $\{h_n\}$ via
	\begin{equation}
		h_n(z) = \frac{1}{n}\log|T_n^{\sfE}(z)|+U^{\mu_{\sfE}}(z)+\frac{1}{n}\sum_{k=1}^{m_n}G_{\Omega_{\sfE}}(z,z_{k,n})
	\end{equation}
	where $z_{1,n},\dotsc,z_{m_n,n}$ is an enumeration of the zeros of $T_n^{\sfE}$ that reside in $\Omega_{\sfE}$, counting multiplicity. From the properties of $G_{\Omega_{\sfE}}$ detailed in \eqref{eq:greens_function_definition} we gather that the function $h_n$ is harmonic in $\Omega_{\sfE}$. This harmonicity extends to the point at $\infty$. Furthermore, for quasi-every~$\zeta\in \partial \Omega_{\sfE}$, 
	\[\limsup_{z\rightarrow \zeta}h_n(z) = \frac{1}{n}\log |T_n^{\sfE}(\zeta)|-\capacity(\sfE) \leq \frac{1}{n}\log \|T_n^{\sfE}\|_{\sfE}-\capacity(\sfE) \eqqcolon\varepsilon_n\]
	where $\varepsilon_n\rightarrow 0$, as a result of Theorem \ref{thm:ffs-theorem}. From the maximum principle, see e.g. \cite[Theorem 3.6.9]{ransford95}, we conclude that $h_n(z)\leq \varepsilon_n$ for every $z\in \Omega_{\sfE}$. Since 
	\[\lim_{z\rightarrow \infty}\left(\frac{1}{n}\log |T_n^{\sfE}(z)|+U^{\mu_{\sfE}}(z)\right) = 0\]
	the symmetry of $G_{\Omega_{\sfE}}$ provides us with the estimate
	\[\varepsilon_n\geq h_n(\infty) = \frac{1}{n}\sum_{k=1}^{m_n}G_{\Omega_{\sfE}}(\infty,z_{n,k})=\frac{1}{n}\sum_{k=1}^{m_n}G_{\sfE}(z_{n,k}) = \int_{\Omega_{\sfE}}G_{\sfE}(z)d\nu(T_n^{\sfE})(z).\]
	In particular,
	\begin{equation}
		\lim_{n\rightarrow \infty}\int_{\Omega_{\sfE}}G_{\sfE}(z)d\nu(T_n^{\sfE})(z) = 0.
		\label{eq:green_integral_zero_counting_outside}
	\end{equation}
	Let $A$ denote any closed subset of $\Omega_{\sfE}$. Since $A$ is compact, there exists some $c>0$ such that~$G_{\sfE}(z)\geq c$ for all $z\in A$. As a consequence
	\[\nu(T_n^{\sfE})(A)\leq \frac{1}{c}\int_{A}G_{\sfE}(z)d\nu(T_n^{\sfE})\leq \frac{1}{c}\int_{\Omega_{\sfE}}G_{\sfE}(z)d\nu(T_n^{\sfE})(z).\]
	From \eqref{eq:green_integral_zero_counting_outside} we gather that $\nu(T_n^{\sfE})(A)\rightarrow 0$ as $n\rightarrow \infty$. 
\end{proof}

If \eqref{eq:interior_zero_condition} holds then any limit point measure of $\nu(T_n^{\sfE})$ must be supported on the boundary of the unbounded complement $\Omega_{\sfE}$. The equilibrium measure $\mu_{\sfE}$ has this very property and so this is what we expect based on Theorem \ref{thm:blatt-saff-simkani-zeros-inside}. To conclude the proof we need a certain minimality condition which is a consequence of the Szeg\H{o} inequality and weak asymptotics of Chebyshev polynomials outside of the convex hull of $\sfE$.

\begin{theorem}
	Let $\sfE$ be a compact set with positive capacity. Uniformly on compact subsets of $\overline{\C}\setminus \operatorname{cvh}(\sfE)$ it holds that
	\begin{equation}
		\lim_{n\to\infty}\frac{|T_n^{\sfE}(z)|^{1/n}}{\capacity(\sfE)\exp{G_{\sfE}(z)}}= 1.
	\end{equation}
	\label{thm:saff_totik_chebyshev_asymptotics_unbounded_complement}
\end{theorem}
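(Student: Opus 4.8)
The plan is to establish matching bounds for $u_n(z):=\tfrac1n\log|T_n^{\sfE}(z)|$ on compact subsets of $\overline{\C}\setminus\operatorname{cvh}(\sfE)$: a Bernstein--Walsh upper bound, which is routine given Theorem~\ref{thm:ffs-theorem}, and a lower bound obtained by a Harnack-inequality argument that crucially uses Fej\'er's theorem. Write $K=\operatorname{cvh}(\sfE)$ and $\varepsilon_n=\tfrac1n\log t_n(\sfE)-\log\capacity(\sfE)$; Szeg\H{o}'s inequality \eqref{eq:szego_inequality} gives $\varepsilon_n\ge 0$ and Theorem~\ref{thm:ffs-theorem} gives $\varepsilon_n\to 0$. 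Since $K$ is convex it is polynomially convex, so $\overline{\C}\setminus K$ is a subdomain of $\Omega_{\sfE}$ on which $G_{\sfE}$ is harmonic.

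For the upper bound, the maximum principle applied to the subharmonic function $u_n-G_{\sfE}$ on $\Omega_{\sfE}$ (cf. the proof of Theorem~\ref{thm:nbr_of_zeros_complement}) gives
\[
u_n(z)-G_{\sfE}(z)\le\tfrac1n\log t_n(\sfE)=\log\capacity(\sfE)+\varepsilon_n,\qquad z\in\Omega_{\sfE}.
\]
Hence $\limsup_n\bigl(u_n(z)-G_{\sfE}(z)\bigr)\le\log\capacity(\sfE)$, uniformly on compact subsets of $\overline{\C}\setminus K$ by continuity of $G_{\sfE}$ there.

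For the lower bound, recall from Fej\'er's theorem (Theorem~\ref{thm:fejer_zeros_convex_hull}) that every zero of $T_n^{\sfE}$ lies in $K$; hence $u_n$ is harmonic on $\overline{\C}\setminus K$, and therefore so is $h_n:=u_n-G_{\sfE}$, once one notes that the logarithmic poles of $u_n$ and $G_{\sfE}$ at $\infty$ cancel, leaving $h_n$ harmonic across $\infty$ with $h_n(\infty)=\log\capacity(\sfE)$ (by \eqref{eq:greens_function_at_infinity} and monicity of $T_n^{\sfE}$). By the bound of the previous paragraph, $\log\capacity(\sfE)+\varepsilon_n-h_n$ is non-negative and harmonic on $\overline{\C}\setminus K$, so for a fixed compact $\Lambda\subset\overline{\C}\setminus K$, Harnack's inequality on the compact set $\Lambda\cup\{\infty\}$ supplies a constant $C_{\Lambda}$ with
\[
\log\capacity(\sfE)+\varepsilon_n-h_n(z)\le C_{\Lambda}\bigl(\log\capacity(\sfE)+\varepsilon_n-h_n(\infty)\bigr)=C_{\Lambda}\,\varepsilon_n,\qquad z\in\Lambda.
\]
Thus $h_n(z)\ge\log\capacity(\sfE)-(C_{\Lambda}-1)\varepsilon_n$ on $\Lambda$, and combining this with the upper bound gives $h_n(z)\to\log\capacity(\sfE)$ uniformly on $\Lambda$; exponentiating yields the theorem.

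The essential point — and the one place where convexity of $\operatorname{cvh}(\sfE)$ rather than just $\sfE$ is used — is that Fej\'er's theorem makes $u_n$ zero-free, hence harmonic, on all of $\overline{\C}\setminus K$, so that the non-negative harmonic function $\log\capacity(\sfE)+\varepsilon_n-h_n$ is available for the Harnack estimate; on $\overline{\C}\setminus\sfE$ this would fail because $T_n^{\sfE}$ may have zeros between the polynomially convex hull of $\sfE$ and $\partial K$. The remaining technicalities (the harmonic extension of $h_n$ to $\infty$, and the uniformity of the Harnack constant over a compact $\Lambda$, which is immediate since $\Lambda\cup\{\infty\}$ is a compact subset of the fixed domain $\overline{\C}\setminus K$) are routine bookkeeping.
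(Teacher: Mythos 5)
Your proof is correct and follows essentially the same route as the paper: Fej\'er's theorem makes $\tfrac1n\log|T_n^{\sfE}|-G_{\sfE}$ harmonic off $\operatorname{cvh}(\sfE)$, the extended maximum principle plus Szeg\H{o}'s inequality pin its value between $\log\capacity(\sfE)$ at $\infty$ and $\log\capacity(\sfE)+\varepsilon_n$ globally, and Harnack's inequality converts $\varepsilon_n\to0$ into locally uniform convergence. The only difference is cosmetic — you work with $u_n-G_{\sfE}$ and make the Harnack constant explicit where the paper cites a variant of Harnack's theorem for its (sign-flipped) $h_n$.
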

\begin{proof}
	As a consequence of Theorem \ref{thm:fejer_zeros_convex_hull}, we deduce that the family of functions $\{h_n\}$ defined by
	\[h_n(z)\coloneqq\frac{1}{n}\log\|T_n^{\sfE}\|_{\sfE}+G_{\sfE}(z)-\frac{1}{n}\log|T_n^{\sfE}(z)|\]
	is a family of harmonic functions on $\C\setminus \operatorname{cvh}(\sfE)$. From \eqref{eq:greens_function_at_infinity} together with \cite[Corollary 3.6.2]{ransford95} we gather that $h_n$ extends harmonically at infinity with the value
	\[h_n(\infty) = \frac{1}{n}\log \|T_n^{\sfE}\|_{\sfE}-\log \capacity(\sfE) = \frac{1}{n}\log\cW_{n}(\sfE) \geq 0.\]
	The inequality is a consequence of Theorem \ref{thm:szego_inequality}. Since $h_n$ extends superharmonically to the unbounded component $\Omega_{\sfE}$ of $\C\setminus \sfE$ and 
	\[\lim_{z\rightarrow \zeta}h_n(z) = \frac{1}{n}\log \|T_n^{\sfE}\|_{\sfE}-\frac{1}{n}\log|T_n^{\sfE}(\zeta)|\geq 0\]
	for q.e. $\zeta\in \partial\Omega_{\sfE}$ we conclude from the extended minimality principle \cite[Theorem 3.6.9]{ransford95} that~$h_n(z)\geq 0$ for every $z\in \Omega_{\sfE}$. On the other hand $h_n(\infty)\rightarrow 0$ due to Theorem \ref{thm:ffs-theorem}. The final ingredient is supplied by a variant of Harnack's theorem \cite[Theorem 1.3.10]{ransford95} which entails that these conditions are enough to guarantee that $h_n\rightarrow 0$ locally uniformly. Combining these considerations gives us that
	\[\exp(h_n(z)) = \frac{\|T_n^{\sfE}\|_{\sfE}^{1/n}\exp(G_{\sfE}(z))}{|T_n^{\sfE}(z)|^{1/n}} = \frac{\|T_n^{\sfE}\|_{\sfE}^{1/n}}{\capacity(\sfE)}\frac{\capacity(\sfE)\exp(G_{\sfE}(z))}{|T_n^{\sfE}(z)|^{1/n}}\rightarrow 1\]
	uniformly on compact subsets of $\overline{\C}\setminus \operatorname{cvh}(\sfE)$.
\end{proof}
While it is not true that $\nu(T_n^{\sfE})\xrightarrow{\ast}\mu_{\sfE}$ as $n\rightarrow \infty$ for any compact set $\sfE$ with $\capacity(\sfE)>0$, as exemplified by $\bbT = \sfE$, there is a general ``sweeping'' procedure which relates any limit point of $\nu(T_n^{\sfE})$ with $\mu_{\sfE}$. We recall the notation $\cM(\sfE)$ which denotes the family of probability measures on $\sfE$. Again, $\Omega_{\sfE}$ denotes the unbounded complement of $\sfE$. If $\mu\in \cM(\sfE)$ then we say that $\mu^b$ is the ``balayage'' of $\mu$ to $\partial \Omega_{\sfE}$ if $\mu^b\in \cM(\partial \Omega_{\sfE})$ and
\begin{equation}
	U^{\mu}(z) = U^{\mu^{b}}(z)
	\label{eq:balayage_outside}
\end{equation}
for quasi-every $z\in \overline{\Omega}_{\sfE}$. The measure $\mu^b$ defined this way is the unique measure in $\cM(\partial \Omega_{\sfE})$ satisfying \eqref{eq:balayage_outside} with finite energy $\cE(\cdot)<\infty$, see \cite[Theorem 2.2]{mhaskar-saff91}. Recall that $\mu_{\sfE}$ always has its support contained in $\partial \Omega_{\sfE}$.

Returning to the example of the unit circle we know that $U^{\mu_{\bbT}}(z) = -\log |z|$ for $|z|\geq 1$ and since $U^{\delta_0}(z) = -\log |z|$ for $|z|>0$ we conclude that $\mu_{\bbT}$ is the balayage of $\delta_0 = \nu(T_n^{\bbT})$. This observation can be significantly generalized as we now show, see also \cite[Theorem 2.1]{christiansen-simon-zinchenko-IV}.

\begin{theorem}[Mhaskar \& Saff (1991) \cite{mhaskar-saff91}]
	Let $\sfE$ denote a polynomially convex compact set with $\capacity(\sfE)>0$. If $\mu_\infty$ denotes any limit point of $\{\nu(T_n^{\sfE})\}$ then $\mu_\infty$ is supported on $\sfE$ and for all $z\in \Omega_{\sfE}$, the complement of $\C\setminus \sfE$, it holds that
	\begin{equation}
		U^{\mu_\infty}(z) = U^{\mu_{\sfE}}(z).
  	\end{equation} 
  	\label{thm:balayge-equality}
\end{theorem}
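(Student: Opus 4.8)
The plan is to combine three facts already established: the localization of the zeros of $T_n^{\sfE}$ (Theorems \ref{thm:fejer_zeros_convex_hull} and \ref{thm:nbr_of_zeros_complement}), the strong $n$-th root asymptotics outside the convex hull (Theorem \ref{thm:saff_totik_chebyshev_asymptotics_unbounded_complement}), and the identity theorem for harmonic functions. Throughout, fix a subsequence $\{n_k\}$ realizing $\nu(T_{n_k}^{\sfE})\xrightarrow{\ast}\mu_\infty$; by Fej\'er's theorem every $\nu(T_n^{\sfE})$ is carried by the fixed compact set $\operatorname{cvh}(\sfE)$, so such a limit point exists and is supported in $\operatorname{cvh}(\sfE)$.

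First I would show $\supp\mu_\infty\subseteq\sfE$. Polynomial convexity gives $\C\setminus\sfE=\Omega_{\sfE}$, so for every compact $K\subset\C\setminus\sfE$ Theorem \ref{thm:nbr_of_zeros_complement} yields $\nu(T_{n_k}^{\sfE})(K)\to 0$; exhausting $\C\setminus\sfE$ by compact sets and applying the portmanteau theorem to their interiors then forces $\mu_\infty(\C\setminus\sfE)=0$. This is the only place polynomial convexity is needed: it prevents zeros from accumulating inside a bounded complementary component of $\sfE$, where Theorem \ref{thm:nbr_of_zeros_complement} offers no control.

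Next I would identify the two potentials on $\overline{\C}\setminus\operatorname{cvh}(\sfE)$. For $z\notin\operatorname{cvh}(\sfE)$ the kernel $\zeta\mapsto-\log|z-\zeta|$ is continuous on $\operatorname{cvh}(\sfE)$, so weak-$\ast$ convergence gives
\[\frac1{n_k}\log\big|T_{n_k}^{\sfE}(z)\big|=-U^{\nu(T_{n_k}^{\sfE})}(z)\longrightarrow -U^{\mu_\infty}(z),\]
whereas Theorem \ref{thm:saff_totik_chebyshev_asymptotics_unbounded_complement}, rewritten via \eqref{eq:greens_function_from_potential} and $\cE(\mu_{\sfE})=-\log\capacity(\sfE)$, gives
\[\frac1n\log\big|T_n^{\sfE}(z)\big|\longrightarrow \log\capacity(\sfE)+G_{\sfE}(z)=-U^{\mu_{\sfE}}(z)\]
uniformly on compact subsets of $\overline{\C}\setminus\operatorname{cvh}(\sfE)$. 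Comparing the two limits yields $U^{\mu_\infty}(z)=U^{\mu_{\sfE}}(z)$ for all $z\notin\operatorname{cvh}(\sfE)$. Finally, since both $U^{\mu_\infty}$ and $U^{\mu_{\sfE}}$ are potentials of measures supported in $\sfE$, they are harmonic on the connected open set $\Omega_{\sfE}$ and coincide on the nonempty open subset $\overline{\C}\setminus\operatorname{cvh}(\sfE)\subset\Omega_{\sfE}$; the identity theorem for harmonic functions then propagates the equality to all of $\Omega_{\sfE}$, which is the assertion.

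The main obstacle is less a genuine difficulty than a point of care: the matching of the two limits above is only directly available on $\overline{\C}\setminus\operatorname{cvh}(\sfE)$, since that is where the logarithmic kernel is continuous against the weakly convergent zero-counting measures and where Theorem \ref{thm:saff_totik_chebyshev_asymptotics_unbounded_complement} applies. The passage to all of $\Omega_{\sfE}$ thus cannot be made by a direct estimate on $T_n^{\sfE}$ near $\sfE$, where the zeros live and the strong asymptotics may fail, and must instead proceed through harmonicity and connectedness of $\Omega_{\sfE}$.
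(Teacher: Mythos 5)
Your proposal is correct and follows essentially the same route as the paper: support localization via Theorem \ref{thm:nbr_of_zeros_complement}, identification of $U^{\mu_\infty}$ with $-\tfrac1{n_k}\log|T_{n_k}^{\sfE}|$ in the limit, comparison with Theorem \ref{thm:saff_totik_chebyshev_asymptotics_unbounded_complement} away from $\operatorname{cvh}(\sfE)$ (the paper works near infinity), and the identity principle for harmonic functions on the connected set $\Omega_{\sfE}$. Your explicit remark that the weak-$\ast$ limit of the potentials is only directly justified where the logarithmic kernel is continuous against the zero-counting measures is a point the paper glosses over, and is a welcome precision.
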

\vspace{-1cm}
\begin{proof}
	From Theorem \ref{thm:nbr_of_zeros_complement} it immediately follows that any limit point $\mu_\infty$ of the sequence~$\{\nu(T_n^{\sfE})\}$ is supported on the set $\sfE$. As a consequence $U^{\mu_\infty}$ is harmonic on $\Omega_{\sfE}$. Pick a subsequence $n_k$ such that $\nu(T_{n_k}^{\sfE})\xrightarrow{\ast}\mu_\infty$. Then
	\[U^{\mu_\infty}(z) = \lim_{n_k\rightarrow \infty}\int_{\C}\log \frac{1}{|z-\zeta|}d\nu(T_{n_k}^{\sfE})(\zeta) = \lim_{n_k\rightarrow \infty}-\frac{1}{n_k}\log |T_{n_k}^{\sfE}(z)|.\]
	Theorem \ref{thm:saff_totik_chebyshev_asymptotics_unbounded_complement} therefore implies that in a neighborhood of infinity
	\[U^{\mu_\infty}(z) = -G_{\sfE}(z)-\log \capacity(\sfE) = U^{\mu_{\sfE}}(z)\]
	and therefore the identity principle for harmonic functions \cite[Theorem 1.1.7]{ransford95} implies that this equality persists on $\Omega_{\sfE}$. 
\end{proof}

A consequence of Theorem \ref{thm:balayge-equality} is that the balayage of any limit point of $\{\nu(T_n^{\sfE})\}$ equals~$\mu_{\sfE}$. With this result at hand we are now in a position to finally prove Theorem \ref{thm:blatt-saff-simkani-zeros-inside}.

\begin{proof}[Proof of Theorem \ref{thm:blatt-saff-simkani-zeros-inside}]
	Let $\mu_\infty$ be any limit point of $\nu(T_n^{\sfE})$. Then Theorem \ref{thm:balayge-equality} implies that~$\mu_\infty$ is supported on the polynomially convex hull of $\sfE$. On the other hand the condition that~$\nu(T_n^{\sfE})(A)\rightarrow 0$ for any closed subset on the bounded components of $\C\setminus \partial \Omega_{\sfE}$ implies that the support of any limit point measure $\mu_\infty$ is contained in $\partial \Omega_{\sfE}$. The regularity (in the sense of potential theory) of $\partial\Omega_{\sfE}$ together with Theorem \ref{thm:balayge-equality} and the lower-semicontinuity of potentials gives us that
	\[U^{\mu_\infty}(\zeta)\leq \liminf_{z\rightarrow \zeta}U^{\mu_\infty}(z) = \liminf_{z\rightarrow \zeta}U^{\mu_{\sfE}}(z) \leq  -\log\capacity(\sfE).\]
	Since the support of $\mu_\infty$ lies on $\partial \Omega_{\sfE}$ and $\mu_\infty$ necessarily is a probability measure we conclude that
	\[\cE(\mu_\infty)=\int_{\C}U^{\mu_\infty}(\zeta)d\mu_\infty(\zeta)\leq -\log \capacity(\sfE).\]
	The uniqueness of the minimizer $\mu_{\sfE}$ for the energy potential now implies that $\mu_{\sfE} = \mu_{\infty}$. Since $\mu_\infty$ was an arbitrary limit point and $\nu(T_n^{\sfE})$ is limit point compact as a consequence of the Banach--Alaoglu Theorem, see e.g. \cite[Theorem V.3.1]{conway90}, we finally conclude that 
	\[\nu(T_n^{\sfE})\xrightarrow{\ast}\mu_{\sfE}\]
	as $n\rightarrow \infty.$
\end{proof}
In a completely analogous fashion to how we showed Theorem \ref{thm:nbr_of_zeros_complement} we can prove the following simplification of \cite[Theorem III.4.1]{saff-totik97} using Theorem \ref{thm:blatt-saff-simkani-zeros-inside}. 

\begin{theorem}[Saff \& Totik (1997) \cite{saff-totik97}]
	\label{thm:saff-totik-point-evaluation-zeros}
	Let $\sfE\subset \C$ be a compact set of positive capacity such that the unbounded component $\Omega_{\sfE}$ of $\C\setminus \sfE$ has a boundary which is regular (in the sense of potential theory). If every bounded component of $\C\setminus \partial\Omega_\sfE$ contains a point $z_0$ such that
	\begin{equation}
		\liminf_{n\rightarrow \infty}|T_n^{\sfE}(z_0)|^{1/n}= \capacity(\sfE)
		\label{eq:saff-totik-point-evaluation-inside}
	\end{equation}
	then $\nu(T_n^{\sfE})\xrightarrow{\ast}\mu_{\sfE}$ as $n\rightarrow \infty$.
\end{theorem}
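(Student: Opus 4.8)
The plan is to reduce the statement to Theorem~\ref{thm:blatt-saff-simkani-zeros-inside} by verifying its hypothesis, namely that $\nu(T_n^{\sfE})(A)\to 0$ as $n\to\infty$ for every closed set $A$ contained in the union of the bounded components of $\C\setminus\overline{\Omega}_{\sfE}$. (Such an $A$ is automatically compact, since that union is contained in the polynomially convex hull of $\sfE$.) The verification is a component-by-component adaptation of the proof of Theorem~\ref{thm:nbr_of_zeros_complement}, with the Green's function $G_V(\cdot,z_0)$ of a bounded component $V$, with pole at the distinguished point $z_0\in V$, playing the role that $G_{\Omega_{\sfE}}(\cdot,\infty)$ played there.

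Fix a bounded component $V$ of $\C\setminus\partial\Omega_{\sfE}$; since a polar set cannot separate the sphere, $\partial V$ is non-polar and $G_V$ exists. Let $z_0\in V$ be the point furnished by the hypothesis, and enumerate, with multiplicity, the zeros of $T_n^{\sfE}$ lying in $V$ as $z_{1,n},\dots,z_{l_n,n}$. Set
\[
h_n(z)=\frac1n\log|T_n^{\sfE}(z)|+\frac1n\sum_{k=1}^{l_n}G_V(z,z_{k,n}).
\]
The logarithmic poles of the two terms cancel, so $h_n$ is harmonic on $V$ and bounded above near $\partial V$. Since $\partial V\subset\partial\Omega_{\sfE}\subset\sfE$, for quasi-every boundary point $\zeta$,
\[
\limsup_{z\to\zeta}h_n(z)\le\frac1n\log\|T_n^{\sfE}\|_{\sfE}=\log\capacity(\sfE)+\varepsilon_n,\qquad \varepsilon_n:=\tfrac1n\log\cW_n(\sfE),
\]
with $\varepsilon_n\to 0$ by Theorem~\ref{thm:ffs-theorem}. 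The extended maximum principle \cite[Theorem 3.6.9]{ransford95} then gives $h_n\le\log\capacity(\sfE)+\varepsilon_n$ throughout $V$.

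Evaluating at $z_0$ yields
\[
0\le\frac1n\sum_{k=1}^{l_n}G_V(z_0,z_{k,n})\le\log\capacity(\sfE)+\varepsilon_n-\frac1n\log|T_n^{\sfE}(z_0)|.
\]
As $z_0$ lies in the polynomially convex hull of $\sfE$, $|T_n^{\sfE}(z_0)|\le\|T_n^{\sfE}\|_{\sfE}=t_n(\sfE)$, so $\limsup_n|T_n^{\sfE}(z_0)|^{1/n}\le\capacity(\sfE)$; combined with the hypothesis $\liminf_n|T_n^{\sfE}(z_0)|^{1/n}=\capacity(\sfE)$, this forces $\tfrac1n\log|T_n^{\sfE}(z_0)|\to\log\capacity(\sfE)$, and hence the right-hand side above tends to $0$. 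Thus $\tfrac1n\sum_k G_V(z_0,z_{k,n})\to 0$. Finally, for a compact $A\subset V$ we have $c_A:=\min_{w\in A}G_V(z_0,w)>0$, so
\[
\nu(T_n^{\sfE})(A)\le\frac{1}{c_A}\int_A G_V(z_0,w)\,d\nu(T_n^{\sfE})(w)\le\frac{1}{c_A}\cdot\frac1n\sum_{k=1}^{l_n}G_V(z_0,z_{k,n})\xrightarrow[n\to\infty]{}0.
\]
A compact subset of the union of the bounded components meets only finitely many of them, so this extends to every such closed set $A$, which verifies the hypothesis of Theorem~\ref{thm:blatt-saff-simkani-zeros-inside} and completes the proof.

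The main obstacle is obtaining convergence along the full sequence rather than merely a subsequence: the hypothesis controls only $\liminf_n|T_n^{\sfE}(z_0)|^{1/n}$, but combining it with the free upper bound $|T_n^{\sfE}(z_0)|\le t_n(\sfE)$ upgrades it to a genuine limit, which is exactly the input consumed by the maximum-principle estimate. The remaining steps — the cancellation of singularities in $h_n$, the boundary behavior via regularity of Green's functions, and the finiteness argument for $A$ — are routine.
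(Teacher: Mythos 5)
Your proof is correct and follows essentially the same route as the paper: reduce to Theorem~\ref{thm:blatt-saff-simkani-zeros-inside} by bounding $\frac{1}{n}\sum_k G_V(z_0,z_{k,n})$ via the maximum principle applied to $\frac{1}{n}\log|T_n^{\sfE}|$ plus the Green's-function correction in each bounded component. Your explicit observation that $|T_n^{\sfE}(z_0)|\le t_n(\sfE)$ upgrades the hypothesized $\liminf$ to a genuine limit (so the estimate holds along the full sequence) is a point the paper leaves implicit, and is a welcome clarification.
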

\begin{proof}
	With the intent of applying Theorem \ref{thm:blatt-saff-simkani-zeros-inside} we show that the number of zeros on any closed subset of the bounded components of $\C\setminus \sfE$ is at most $o(n)$ as $n\rightarrow \infty$. Let $\Omega$ be a bounded component of $\C\setminus \sfE$. We define
	\[h_n(z) = \frac{1}{n}\log|T_n^{\sfE}(z)|-\capacity(\sfE)+\frac{1}{n}\sum_{k=1}^{m_n}G_{\Omega}(z,z_{k,n})\]
	where $z_{1,n},\dotsc,z_{m_n,n}$ is an enumeration of the zeros of $T_n^{\sfE}$ contained in $\Omega$. We find that for all~$\zeta\in \partial \Omega$ it holds that
	\[\limsup_{z\rightarrow \zeta}h_n(z) = \frac{1}{n}\log|T_n^{\sfE}(\zeta)|-\capacity(\sfE)\leq \frac{1}{n}\log \|T_n^{\sfE}\|_{\sfE}-\capacity(\sfE)=:\varepsilon_n\]
	where, as a consequence of Theorem \ref{thm:ffs-theorem}, $\varepsilon_n\rightarrow 0$ as $n\rightarrow \infty$. From the maximum principle, we gather that
	\[h_n(z)\leq \varepsilon_n\]
	for all $z\in \Omega$. By taking $z = z_0$ we find that
	\[\frac{1}{n}\sum_{k=1}^{m_n}G_{\Omega}(z_0,z_{k,n})\leq \varepsilon_n+\capacity(\sfE)-\frac{1}{n}\log |T_n^{\sfE}(z_0)| = o(1)\]
	as $n\rightarrow \infty$. On the other hand
	\[\frac{1}{n}\sum_{k=1}^{m_n}G_{\Omega}(z_0,z_{k,n}) = \frac{1}{n}\sum_{k=1}^{m_n}G_{\Omega}(z_{k,n},z_0) = \int_{\Omega}G_{\Omega}(z,z_0)d\nu(T_n^{\sfE})(z)\rightarrow 0\]
	as $n\rightarrow \infty$. Any closed subset $A$ of $\Omega$ is compact and hence, given such a set, there exists some~$c>0$ such that $G_{\Omega}(z,z_0)\geq c$. It now follows that
	\begin{align*}
		\limsup_{n\rightarrow \infty}\nu(T_n^{\sfE})(A)&\leq \limsup_{n\rightarrow \infty}\frac{1}{c}\int_{A}G_{\Omega}(z,z_0)d\nu(T_n^{\sfE})(z)\\&\leq \limsup_{n\rightarrow \infty}\frac{1}{c}\int_{\Omega}G_{\Omega}(z,z_0)d\nu(T_n^{\sfE})(z)=0.
	\end{align*}
	Through this chain of reasoning we have verified that the hypothesis of Theorem \ref{thm:blatt-saff-simkani-zeros-inside} is satisfied and therefore $\nu(T_n^{\sfE})\xrightarrow{\ast} \mu_{\sfE}$ as $n\rightarrow \infty$.
\end{proof}

\begin{remark}
	If a sequence of monic polynomials $\{P_n\}$, where $\deg P_n = n$, satisfies \[\limsup_{n\rightarrow \infty}\|P_n\|^{1/n}_{\sfE} = \capacity(\sfE)\] then we say that $P_n$ is \textit{asymptotically extremal} on $\sfE$, a terminology originating from \cite[p. 169]{saff-totik97}. By Theorem \ref{thm:ffs-theorem}, the sequence $\{T_n^{\sfE}\}$ is asymptotically extremal for any compact set~$\sfE$. Many results on weak-star limits of zero counting measures as in \eqref{eq:zero_counting_measure} can be phrased in terms of asymptotic extremality and therefore implicitly hold for Chebyshev polynomials. As an example, Theorems \ref{thm:blatt-saff-simkani-zeros-inside}, \ref{thm:saff_totik_chebyshev_asymptotics_unbounded_complement}, and \ref{thm:saff-totik-point-evaluation-zeros} all hold in this extended setting. A generalization of Theorem \ref{thm:blatt-saff-simkani-zeros-inside} was shown by Grothmann in \cite{grothmann90} and can be found in \cite[Theorem 2.1.1]{andrievskii-blatt01}. 
\end{remark}

It is not always the case that the zeros of $T_n^{\sfE}$ approach the outer boundary of $\sfE$. In particular this can never happen for closures of analytic Jordan domains.

\begin{theorem}[Saff \& Totik (1990) \cite{saff-totik90}]
	Let $\sfE\subset \C$ be a compact set with connected interior and connected complement. There exists a neighborhood $U$ of $\partial \sfE$ and $N\in \N$ such that 
	\[\nu(T_n^{\sfE})(U) = 0,\quad n\geq N\]
	if and only if $\partial\sfE$ is an analytic Jordan curve. 
	\label{thm:saff-totik-zeros}
	\end{theorem}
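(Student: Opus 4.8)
I would prove the two implications separately.

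\textbf{Analytic Jordan boundary $\Rightarrow$ zeros avoid a neighborhood of $\partial\sfE$.} When $\partial\sfE$ is an analytic Jordan curve, $\sfE$ is the closure of a Jordan domain, $\Omega_\sfE$ is simply connected, and the exterior map $\Phi$ from \eqref{eq:canonical_conformal_map} extends to a conformal map on an open set containing a two-sided collar of $\partial\sfE$; in particular $|\Phi|$ is bounded below there by some $\rho_0 < 1$. Starting from the Faber/Widom strong asymptotics $\Phi'(\infty)^nT_n^\sfE(z) = \Phi(z)^n(1+o(1))$, valid uniformly on $\overline{\Omega_\sfE}$ up to and including $\partial\sfE$ (Theorem \ref{thm:faber_asymptotics} and \cite[Section 2]{widom69}), I would upgrade this, on a fixed two-sided neighborhood $U$ of $\partial\sfE$, to the stronger statement $\Phi'(\infty)^nT_n^\sfE(z) = \Phi(z)^n + O(r^n)$ for some $r\in(\rho_0,1)$: the difference is governed by a Cauchy integral over an equipotential curve $\sfE(R)$ with $R>1$ (as in the Faber estimates underlying Theorem \ref{thm:faber_asymptotics} and the Widom construction of Theorem \ref{thm:widom_weight}), and the analytic continuation of $\Phi$ across $\partial\sfE$ lets one push that contour inward to $\sfE(r)$, making the error exponentially small uniformly on $U$. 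Shrinking $U$ so that $\min_U|\Phi| > r$, the right-hand side is bounded below by a positive constant times $(\min_U|\Phi|)^n$ for $n$ large, so $T_n^\sfE$ is zero-free on $U$ once $n\geq N$.

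\textbf{Zeros avoid $U$ $\Rightarrow$ $\partial\sfE$ is an analytic Jordan curve.} Assume $\nu(T_n^\sfE)(U) = 0$ for $n\geq N$. By Fej\'er's theorem \ref{thm:fejer_zeros_convex_hull} every zero of $T_n^\sfE$ lies in $\operatorname{cvh}(\sfE)$, hence in the fixed compact set $K_0 := \operatorname{cvh}(\sfE)\setminus U$ for $n\geq N$; fix a smaller neighborhood $U'$ of $\partial\sfE$ with $\overline{U'}\cap K_0 = \emptyset$. Passing to a subsequence, $\nu(T_{n_k}^\sfE)\xrightarrow{\ast}\sigma$ with $\supp\sigma\subseteq K_0$, so $\tfrac1{n_k}\log|T_{n_k}^\sfE| = -U^{\nu(T_{n_k}^\sfE)}\to -U^\sigma$ locally uniformly on $\overline{\C}\setminus K_0$, and $-U^\sigma$ is harmonic on $U'$. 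On $\overline{\C}\setminus\operatorname{cvh}(\sfE)$ this limit equals $\log\capacity(\sfE)+G_\sfE$ by Theorem \ref{thm:saff_totik_chebyshev_asymptotics_unbounded_complement}, so by the identity principle $\tilde G := -U^\sigma - \log\capacity(\sfE)$ extends $G_\sfE$ harmonically from $\Omega_\sfE$ to $\Omega_\sfE\cup U'$. Moreover $\tfrac1n\log|T_n^\sfE|\leq\tfrac1n\log t_n(\sfE)\to\log\capacity(\sfE)$ on $\sfE$ (Theorem \ref{thm:ffs-theorem}), which gives $\tilde G>0$ on $\Omega_\sfE\cap U'$, $\tilde G\le 0$ on $(\operatorname{int}\sfE)\cap U'$, $\tilde G\equiv 0$ on $\partial\sfE$, and $\tilde G\not\equiv 0$. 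Since $\log|1/\Phi| = -G_\sfE = -\tilde G$ now extends harmonically across $\partial\sfE$, the analytic function $1/\Phi$ on $\Omega_\sfE$ extends, near each $\zeta\in\partial\sfE$, to an analytic function $\Psi$ with $|\Psi|=1$ on $\partial\sfE$, $|\Psi|<1$ on the $\Omega_\sfE$-side, and $|\Psi|>1$ on the $\operatorname{int}\sfE$-side. Then $\Psi'$ cannot vanish on $\partial\sfE$: a zero of $\Psi'$ at $\zeta$ would make $\Psi$ at least two-to-one near $\zeta$, so some $w$ with $|w|<1$ close to $\Psi(\zeta)\in\bbT$ would have two preimages on the $\Omega_\sfE$-side, contradicting the global injectivity of $\Phi$ on $\Omega_\sfE$; and a one-dimensional ``hair'' of $\partial\sfE$ (with $\Omega_\sfE$ lying on both sides of it) would force $|\Psi|\le 1$ near an interior zero of the non-constant $\Psi$, violating the maximum principle. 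Hence $\Psi$ is conformal near $\partial\sfE$, so $\partial\sfE = \Psi^{-1}(\bbT)$ is, near each of its points, a single real-analytic arc; being compact and connected, $\partial\sfE$ is an analytic Jordan curve.

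\textbf{Where the difficulty lies.} The crux of the ``if'' direction is this last step: extracting analyticity and the Jordan property of $\partial\sfE$ from the harmonic extension of $G_\sfE$ and the sign pattern of $\tilde G$. The minimum principle eliminates hairs at once; self-crossings and cusps must also be ruled out — for a crossing one genuinely needs the global injectivity of $\Phi$, not merely local data, while a cusp is impossible because the zero set of a harmonic function is locally a union of \emph{transversal} analytic arcs — and the bookkeeping is in making all this uniform over $\partial\sfE$ and handling boundary points where $\Omega_\sfE\cap U'$ is disconnected. For the ``only if'' direction the one substantial input beyond the quoted results is that the Faber/Widom asymptotics persist, with exponential error, on a genuine two-sided neighborhood of $\partial\sfE$ — which is exactly where analyticity of the boundary is used.
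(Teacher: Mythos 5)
The paper does not actually prove Theorem \ref{thm:saff-totik-zeros}; it only states it and records that the proof in \cite{saff-totik90} proceeds ``using a comparison with the Faber polynomials.'' So there is no in-paper argument to measure you against, and your outline does follow the strategy of the cited source. Your ``only if'' direction is essentially right: Fej\'er confines the zeros to $\operatorname{cvh}(\sfE)\setminus U$, the potentials $-U^{\nu(T_{n_k}^{\sfE})}$ converge uniformly on a compact two-sided collar to a harmonic function which, by Theorem \ref{thm:saff_totik_chebyshev_asymptotics_unbounded_complement} and the identity principle, continues $G_{\sfE}+\log\capacity(\sfE)$ across $\partial\sfE$, and the sign pattern of $\tilde G$ plus the local structure of zero sets of harmonic functions forces $\partial\sfE$ to be an analytic Jordan curve; the topological endgame you defer (critical points, hairs, crossings) is genuine bookkeeping but your sketch identifies the correct tools (minimum principle, injectivity of $\Phi$).

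The genuine gap is in the ``if'' direction. The Cauchy-integral identity over $\sfE(R)$ and the contour-push down to $\sfE(r)$, $r<1$, is a representation of the \emph{Faber} polynomial $F_n^{\sfE}$, not of $T_n^{\sfE}$; it gives $\Phi'(\infty)^nF_n^{\sfE}(z)=\Phi(z)^n+O(r^n)$ on a two-sided collar and hence zero-freeness of $F_n^{\sfE}$ there. You state the same expansion directly for $T_n^{\sfE}$, but $T_n^{\sfE}$ admits no such integral representation, and the passage from $F_n^{\sfE}$ to $T_n^{\sfE}$ is precisely where the work lies. What the Faber estimate plus minimality yields is only $1\leq \Phi'(\infty)^n t_n(\sfE)\leq 1+Cr^n$, i.e. closeness of \emph{norms}; the triangle inequality then gives $\|T_n^{\sfE}-F_n^{\sfE}\|_{\sfE}=O(\Phi'(\infty)^{-n})$, which is useless on the inner collar $\{\rho\leq|\Phi|<1\}$ where the target modulus $|\Phi(z)|^n\Phi'(\infty)^{-n}\asymp\rho^n\Phi'(\infty)^{-n}$ is exponentially smaller than that error. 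To close the gap one must genuinely propagate the boundary information inward: for instance, set $\psi=\Phi^{-1}$ (extended across $\bbT$ by analyticity of $\partial\sfE$) and $G_n(w)=\Phi'(\infty)^nT_n^{\sfE}(\psi(w))w^{-n}$; then $G_n(\infty)=1$ and $\|G_n\|_{\{|w|\geq 1\}}\leq 1+Cr^n$ give, via Schwarz--Pick, $|G_n-1|=O(r^n)$ on $\{|w|\geq 1/\rho'\}$, while $\|G_n-1\|_{\{|w|=\rho_1\}}\leq 2\rho_1^{-n}$, and Hadamard's three-circles theorem interpolates these to show $|G_n-1|<1$ on $\{s_0\leq|w|\leq 1\}$ for $s_0$ close to $1$ and $n$ large, whence $T_n^{\sfE}\neq 0$ on the inner collar. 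Some such interpolation (or an argument-principle zero count on a level curve $\{|\Phi|=s\}$) is indispensable; the contour-push alone does not reach $T_n^{\sfE}$.
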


This entails that the zeros of $T_n^{\sfE}$ stay strictly away from a neighborhood of $\partial \sfE$ for large~$n$ precisely when $\partial\sfE$ is analytic. The proof in \cite{saff-totik90} is performed using a comparison with the Faber polynomials which exhibit this very property. A local result on asymptotic zero distributions of Chebyshev polynomials is given in the following.

\begin{theorem}[Christiansen, Simon \& Zinchenko (2020) \cite{christiansen-simon-zinchenko-IV}]
	Let $\sfE$ be a polynomially convex compact set and $U$ an open connected set with connected complement so that $U\cap \partial \sfE$ is a continuous arc that divides $U$ into two pieces: one contained in the interior of $\sfE$ and one contained in $\C\setminus \sfE$. If 
	\[\liminf_{n\rightarrow \infty}\nu(T_n^{\sfE})(U)=0\]
	then $U\cap \partial \sfE$ is an analytic arc.
\end{theorem}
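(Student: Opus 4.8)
The plan is to pass to a convergent subsequence of zero counting measures, identify its weak-$*$ limit via the balayage identity of Theorem~\ref{thm:balayge-equality}, and use this to extend the Green's function $G_\sfE$ harmonically across the arc $\Gamma:=U\cap\partial\sfE$; the arc will then be exhibited as the zero set in $U$ of a non-constant harmonic function that changes sign across it, and its analyticity will follow from the local structure of nodal sets of harmonic functions.

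In detail: choose $\{n_k\}$ with $\nu(T_{n_k}^\sfE)(U)\to 0$. By Theorem~\ref{thm:fejer_zeros_convex_hull} every $\nu(T_n^\sfE)$ is carried by the fixed compact set $\operatorname{cvh}(\sfE)$, so after a further extraction we may assume $\nu(T_{n_k}^\sfE)\xrightarrow{\ast}\mu_\infty$ with $\mu_\infty$ a probability measure. Since $U$ is open, $\mu_\infty(U)\le\liminf_k\nu(T_{n_k}^\sfE)(U)=0$; thus $\mu_\infty$ is supported on $\sfE\setminus U$, and hence $U^{\mu_\infty}$ is harmonic on $U$. By Theorem~\ref{thm:balayge-equality}, $U^{\mu_\infty}=U^{\mu_\sfE}$ on $\Omega_\sfE$, and — as remarked after that theorem — $\mu_\sfE$ is the balayage of $\mu_\infty$ onto $\partial\Omega_\sfE$, so by \eqref{eq:balayage_outside} the equality $U^{\mu_\infty}=U^{\mu_\sfE}$ holds quasi-everywhere on $\overline{\Omega}_\sfE$. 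Consider the harmonic function $\widetilde G:=-U^{\mu_\infty}-\log\capacity(\sfE)$ on $U$. On the piece $U^-:=U\cap\Omega_\sfE$ of $U\setminus\Gamma$ (here one uses $\C\setminus\sfE=\Omega_\sfE$, by polynomial convexity) relation \eqref{eq:greens_function_from_potential} gives $\widetilde G=-U^{\mu_\sfE}-\log\capacity(\sfE)=G_\sfE>0$. Each point of $\Gamma$ lies on a nondegenerate subarc of $\partial\Omega_\sfE$ and is therefore a regular boundary point, so $G_\sfE\to 0$ there and hence $\widetilde G=0$ on $\Gamma$. Finally, Frostman's theorem \cite{ransford95} gives $U^{\mu_\sfE}=-\log\capacity(\sfE)$ quasi-everywhere on $\sfE$, so the balayage identity yields $U^{\mu_\infty}=-\log\capacity(\sfE)$ quasi-everywhere on $\partial(\operatorname{int}\sfE)\subseteq\partial\Omega_\sfE$; applying the minimum principle to the superharmonic, bounded-below function $U^{\mu_\infty}$ on $\operatorname{int}(\sfE)$ gives $U^{\mu_\infty}\ge-\log\capacity(\sfE)$ there, i.e. $\widetilde G\le 0$ on the piece $U^+:=U\cap\operatorname{int}(\sfE)$, whence $\widetilde G<0$ on $U^+$ by the maximum principle.

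Now $\widetilde G$ is harmonic on $U$, it is not identically zero (else $G_\sfE\equiv 0$ on $U^-\subset\Omega_\sfE$), and since $U=U^-\sqcup\Gamma\sqcup U^+$ with $\widetilde G>0$, $=0$, $<0$ on the three pieces respectively, we get $\{z\in U:\widetilde G(z)=0\}=\Gamma$. Fix an interior point $z_0$ of $\Gamma$. Locally $\Gamma$ is a cross-cut separating a small disc into the two sets $U^\pm$, so the nodal set $\{\widetilde G=0\}$ disconnects a neighbourhood of $z_0$ into exactly two pieces; if $\widetilde G$ vanished at $z_0$ to some order $m\ge2$, its nodal set would instead disconnect a neighbourhood into $2m\ge4$ pieces, a contradiction. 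Hence $\nabla\widetilde G(z_0)\neq0$, so by the real-analytic implicit function theorem $\{\widetilde G=0\}$ — and therefore $\Gamma$ — is an analytic arc near $z_0$. As $z_0$ was an arbitrary interior point, $U\cap\partial\sfE$ is an analytic arc.

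The main obstacle, and the step that genuinely uses Theorem~\ref{thm:balayge-equality}, is showing that $\widetilde G$ is strictly negative on the interior piece $U^+$: extending $G_\sfE$ across $\Gamma$ by itself only gives $\widetilde G>0$ on $U^-$ and $\widetilde G=0$ on $\Gamma$, which does not preclude $\widetilde G$ vanishing to higher order along $\Gamma$. One must combine the balayage identity with Frostman's equality and a minimum-principle argument on $\operatorname{int}(\sfE)$ (taking care of the polar exceptional set on the boundary) to force the sign change. A secondary point to be verified is that the boundary points of $\Omega_\sfE$ lying on $\Gamma$ are regular, which holds because a nondegenerate continuum is non-thin at each of its points.
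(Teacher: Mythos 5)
The survey states this theorem only as a citation to \cite{christiansen-simon-zinchenko-IV} and gives no proof of its own, so there is nothing in the paper to compare against; judged on its merits, your argument is correct and is essentially the argument of the cited source: extract a weak-$\ast$ limit $\mu_\infty$ with $\mu_\infty(U)=0$, use the balayage identity to see that $h:=-U^{\mu_\infty}-\log\capacity(\sfE)$ is harmonic on $U$, equals $G_{\sfE}>0$ on $U\cap\Omega_{\sfE}$, vanishes on $\Gamma=U\cap\partial\sfE$, and is $\leq 0$ (hence $<0$ by the maximum principle and real-analytic continuation) on $U\cap\operatorname{int}(\sfE)$, after which the local structure of the nodal set of a harmonic function forces a nonvanishing gradient on $\Gamma$ and hence analyticity. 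You correctly identify the two genuinely delicate points. First, upgrading $U^{\mu_\infty}=U^{\mu_{\sfE}}$ from $\Omega_{\sfE}$ to quasi-everywhere on $\overline{\Omega}_{\sfE}$ is exactly what the remark following Theorem~\ref{thm:balayge-equality} asserts (identification of the balayage of $\mu_\infty$ with $\mu_{\sfE}$); strictly speaking that identification itself rests on the unicity of finite-energy measures on $\partial\Omega_{\sfE}$ with prescribed potential on $\Omega_{\sfE}$, which the survey takes for granted, so you are entitled to it here. Second, the sign on $U^+$ does require the combination of Frostman's equality, lower semicontinuity of $U^{\mu_\infty}$ to convert the q.e.\ boundary identity into a $\liminf$ condition, and the extended minimum principle applied component by component to $\operatorname{int}(\sfE)$ (whose components have non-polar boundary); your sketch covers all of this. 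The topological step ($2m$ arcs of the nodal set versus the two pieces cut out by a crosscut, forcing $m=1$) and the final appeal to the real-analytic implicit function theorem are standard and correctly deployed. I have no substantive objections.
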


A geometric condition which is related to the zero distribution of $T_n^{\sfE}$ can be found in~\cite{saff-stylianopoulos15}. There they introduce the notion of a \textit{non-convex type singularity}. We state a simplified form that follows from their result. 
\begin{theorem}[Saff \& Stylianopoulos (2015) \cite{saff-stylianopoulos15}]
	\label{thm:saff_stylianopoulos}
	Let \(\sfE\subset \C\) denote a Jordan curve with a non-convex type singularity. Then
	\[\nu(T_n^{\sfE})\xrightarrow{\ast}\mu_{\sfE},\qquad n\to\infty.\]
\end{theorem}
One simple example of a Jordan curve with a non-convex type singularity is a non-convex polygon. If \(\sfE\) is such a curve then $\nu(T_n^{\sfE})\xrightarrow{\ast} \mu_{\sfE}$ as $n\rightarrow \infty$. We refer the reader to \cite[Corollary 2.2]{saff-stylianopoulos15} and the subsequent discussion for an in depth discussion of the implications of their result.

\section{Computing Chebyshev polynomials}
While explicit formulas only exist for Chebyshev polynomials in very special cases there are algorithms which allow for their numerical determination. Classically developed, \textit{the Remez algorithm} allows for the computation of the Chebyshev polynomials corresponding to real sets, see \cite{iske18, remez34-1,remez34-2}. This algorithm, however, relies on the alternating properties of Chebyshev polynomials and is therefore not applicable in the complex setting. An algorithm from \cite{foucartLasserre19} provides a different approach to computing weighted Chebyshev polynomials on unions of intervals. 

To facilitate the computation of the complex counterparts of Chebyshev polynomials one can use an algorithm developed by Tang in \cite{tang87,tang88} and later refined in \cite{fischer-modersitzki-93}. The aim of this algorithm, adapted to the setting of Chebyshev polynomials, is to recover the data from Theorem \ref{thm:rivlin_shapiro}, namely 
\begin{align*}
	&\begin{pmatrix}
		\lambda_1& \lambda_2 &\dotsc & \lambda_m
	\end{pmatrix}\in (0,1)^{m}, \\
	&\begin{pmatrix}
		z_1& z_2 &\dotsc & z_m
	\end{pmatrix}\in \operatorname{Ext}(wT_n^{\sfE,w},\sfE)^{m}, \\
	& \begin{pmatrix}
		\arg T_n^{\sfE,w}(z_1)&\dotsc & \arg T_n^{\sfE,w}(z_m)
	\end{pmatrix}\in [0,2\pi)^{m},
\end{align*}
and construct the corresponding Chebyshev polynomial from these data. In \cite{hubner-rubin25} this algorithm is applied to compute Chebyshev polynomials corresponding to a plethora of different sets. These numerical experiments clearly suggest certain behavior of Chebyshev polynomials both concerning the asymptotic distribution of their zeros as well as the asymptotic behavior of their norms. 
To give an example of the effectiveness of this numerical approach, we emphasize that 
computational experiments based on Tang's algorithm have played a central role 
in the formulation and refinement of several recent hypotheses and results. 
In particular, such experiments proved useful in the development of 
\cite[Theorem~1]{bergman-rubin24}, 
\cite[Theorem~1.2]{christiansen-eichinger-rubin23}, 
\cite[Theorem~1.4]{christiansen-rubin25}, 
\cite[Theorem~2.1]{minadiaz-rubin25}, and 
\cite{minadiaz-rubin-wennman25}.
An experimental approach to the study of Chebyshev polynomials, including a detailed implementation of Tang's algorithm for their numerical computation, is presented in \cite{hubner-rubin25}. We illustrate zeros of Chebyshev polynomials computed using this algorithm in Figures \ref{fig:hyp} and \ref{fig:lem}.

\begin{figure}[h!]\centering
	\includegraphics[width = 0.8\textwidth]{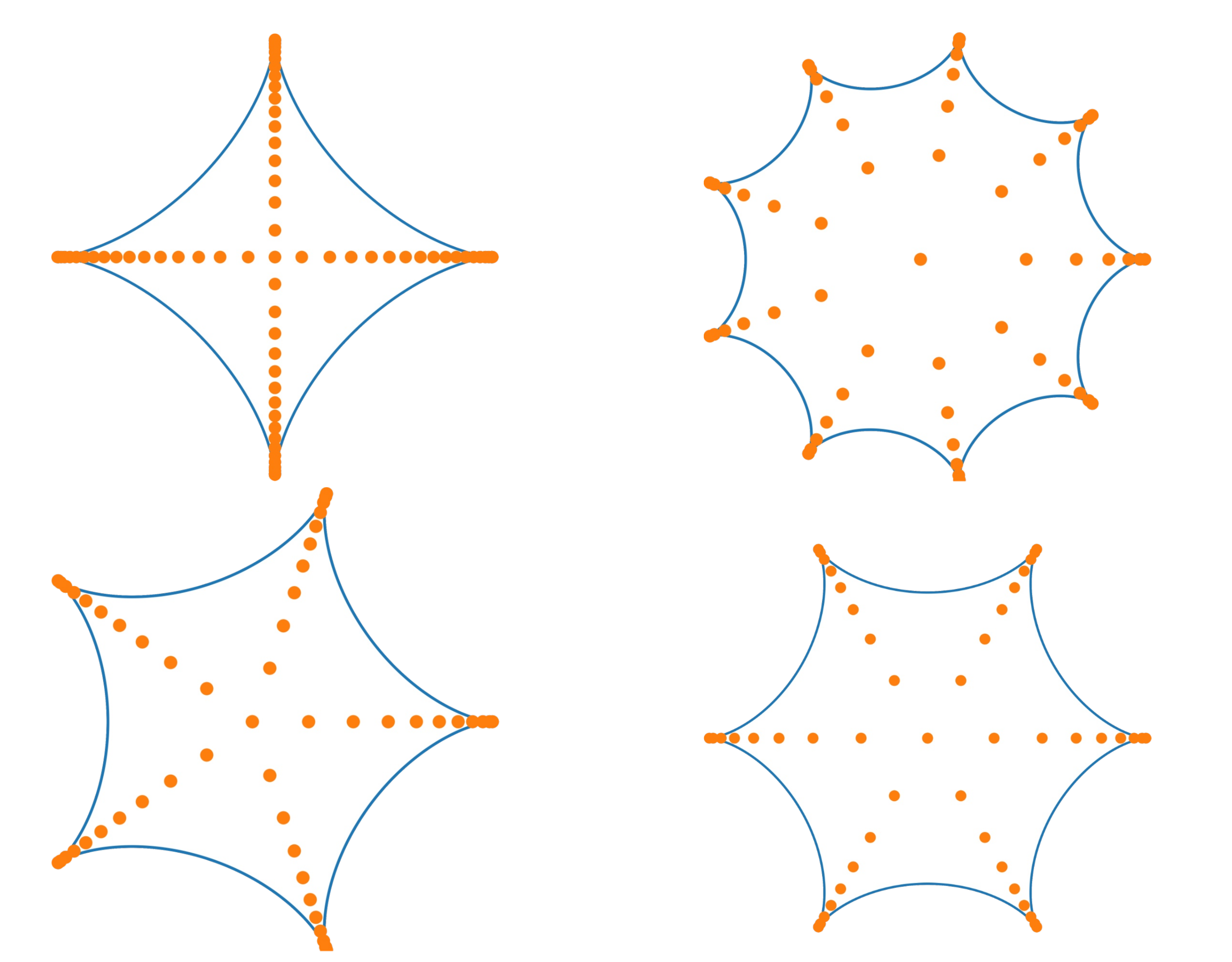}
	\caption{Zeros of Chebyshev polynomials corresponding to $m$-cusped Hypoycloids}
	\label{fig:hyp}
\end{figure}
\begin{figure}[h!]\centering
	\includegraphics[width = 0.5\textwidth]{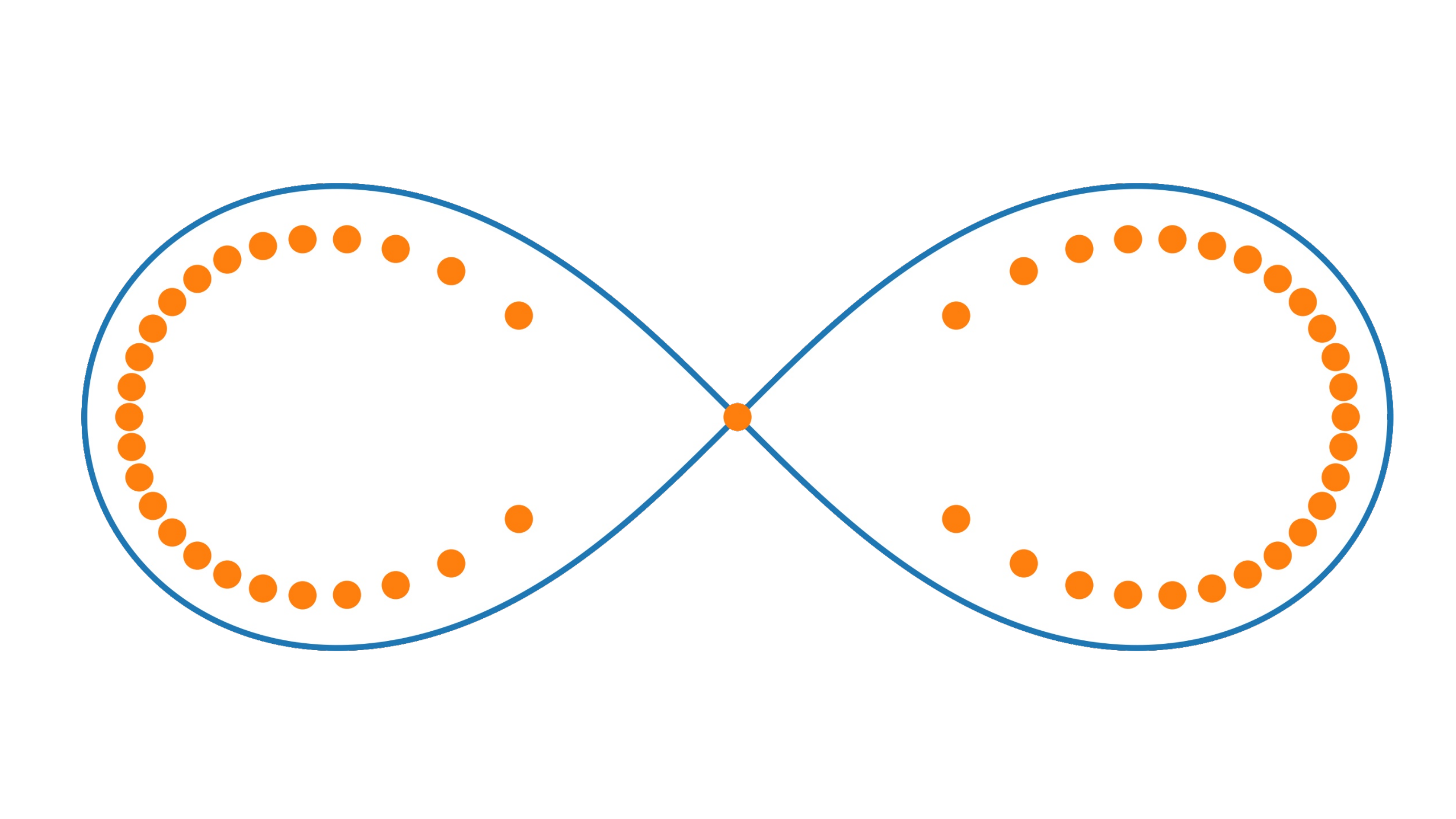}
	\caption{Zeros of Chebyshev polynomials corresponding to the Bernoulli lemniscate $\{z:|z^2-1|=1\}$}
	\label{fig:lem}
\end{figure}

\section{Open problems}

We collect several seemingly open problems concerning Chebyshev polynomials in the complex plane.
Our starting point is a classical question posed by Pommerenke \cite[Problem~4.4]{pommerenke72}.
Although a negative answer has been claimed in the literature, no complete proof appears to have been
published. Moreover, recent work of Andrievskii \cite{andrievskii17} casts doubt on the validity of the
proposed counterexample. We believe that resolving this problem is fundamental for understanding the
growth of Chebyshev polynomials on general compact sets, see also the remark after \cite[Theorem 1]{andrievskii-nazarov19}.

\begin{problem}[Pommerenke {\cite[Problem~4.4]{pommerenke72}}]\label{prob:bnd_Widom}
	Let $\sfE \subset \C$ be a compact connected set with positive logarithmic capacity.
	Is the sequence $\{\cW_n(\sfE)\}_{n=1}^\infty$ bounded?
\end{problem}

An affirmative answer would demonstrate that Chebyshev and Faber polynomials may exhibit
substantially different behavior once boundary regularity assumptions are relaxed; see also
\cite{minadiaz-rubin-wennman25}. This question was asked in the case of finitely many components in \cite[Open Problem 3.1]{christiansen-simon-zinchenko22}.

Problem~\ref{prob:bnd_Widom} also serves as a natural entry point for investigating whether
Parreau--Widom type bounds extend beyond the real line, recall \eqref{eq:parreau_widom_bound}. It is natural to ask whether Theorem~\ref{thm:christiansen-simon-zinchenko} admits an analogue in the
complex plane. As we saw in \eqref{eq:4} the constant \(2\) in \eqref{eq:parreau_widom_bound} needs to be replaced by at least \(4\).

\begin{problem}
	Does there exist a constant \(C\geq 4\) such that if $\sfE \subset \C$ is a compact, regular Parreau--Widom set, then
	\[
		\cW_n(\sfE) \le C\exp\bigl(\parreauwidom(\sfE)\bigr),
		\qquad n \in \N?
	\]
\end{problem}

Closely related to Problem~\ref{prob:bnd_Widom} is the task of identifying compact sets that exhibit
large Widom factors. In particular, one may ask whether extremal sets exist at fixed degree.

\begin{problem}
	Fix $n \in \N$.
	Does there exist a compact set $\sfE(n)$ of positive logarithmic capacity that maximizes
	\[
		\sfE \longmapsto \cW_n(\sfE)
	\]
	among all such sets? If so, characterize such a set.
\end{problem}

For piecewise Dini-smooth Jordan curves, Theorem~\ref{thm:wennman} shows that
\[
	\cW_n(\sfE) \to 1,
	\qquad n \to \infty.
\]
Thus, in order to produce large Widom factors, one must necessarily consider classes of sets with
weaker boundary regularity.

As demonstrated by Theorem~\ref{thm:cheb_arc}, Jordan arcs already lead to larger limiting values of~$\cW_n(\sfE)$. However, even in this setting, the precise asymptotic behavior remains unknown. Recently, a potential limit value was conjectured in \cite{christiansen-simon-zinchenko22}. Let us state this conjecture in the unweighted setting as the following open problem.

Let $\sfE$ be a $C^{2+\alpha}$ Jordan arc, and denote its complement by
\[
	\Omega_{\sfE} = \RS \setminus \sfE.
\]
Let $\Phi : \Omega_{\sfE} \to \RS \setminus \overline{\D}$ be the canonical conformal map satisfying \eqref{eq:canonical_conformal_map}, and let
$\Phi_\pm$ denote its boundary values taken from the two sides of the arc.
By \cite[Eq.~(A.3)]{totik14_2}, the equilibrium measure of $\sfE$ is given by
\[
	\mu_{\sfE}
	=
	\frac{1}{2\pi}\Bigl(|\Phi_+'(z)| + |\Phi_-'(z)|\Bigr)\,|dz|.
\]
Following \cite{widom69}, define
\[
	R(\infty)
	\coloneqq
	\exp\!\left(
		\int_{\sfE}
		\log\!\left(
			\frac{1}{2\pi}\bigl(|\Phi_+'(\zeta)| + |\Phi_-'(\zeta)|\bigr)
		\right)
		d\mu_{\sfE}(\zeta)
	\right).
\]

\begin{problem}[Christiansen, Simon \& Zinchenko {\cite[Conjecture 3.4]{christiansen-simon-zinchenko22}}]
	Let $\sfE$ be a $C^{2+\alpha}$ Jordan arc.
	Is it true that
	\[
		\lim_{n \to \infty} \cW_n(\sfE)
		=
		2\pi\,R(\infty)\,\capacity(\sfE)?
	\]
\end{problem}

In the case where $\sfE$ is a compact interval or a circular arc then this is the correct limit.

Another natural question concerns the sharpness of Szeg\H{o}'s lower bound and the role played by
boundary smoothness.
As previously discussed, there are known examples of quasicircles for which \[\limsup_{n\rightarrow \infty}\cW_n(\sfE)>1,\] see \cite{barnsley-geronimo-harrington83, kamo-borodin94, stawiska96}. These examples are Julia sets of quadratic polynomials whose Chebyshev polynomials can be explicitly determined for subsequences of degrees. Consequently, sufficient conditions to ensure that Widom factors corresponding to closed Jordan domain converge to $1$ require some regularity of the bounding curve. 
\begin{problem}
	Does there exist a Jordan curve $\sfE$ such that
	\[
		\liminf_{n \to \infty} \cW_n(\sfE) > 1?
	\]
\end{problem}

Turning to the relationship between Chebyshev and Faber polynomials, let $\mathcal{F}_n^{\sfE}$
denote the monic Faber polynomial associated with a compact set $\sfE$.
By Theorem~\ref{thm:minadiaz}, one always has
\[
	\lim_{r \to \infty} T_n^{\sfE(r)} = \mathcal{F}_n^{\sfE},
\]
where $\sfE(r)$ denotes the level curve of the Green function.
In certain special cases, such as intervals, circles, and lemniscates (for suitable degrees), it is
known that
\[
	T_n^{\sfE} = \mathcal{F}_n^{\sfE}.
\]
This leads to the question of whether the two families necessarily coincide on all larger level curves if they coincide on one.

\begin{problem}[{\cite[Problem 2.2]{minadiaz-rubin25}}]
	Let $\sfE \subset \C$ be a compact set such that
	\[
		T_n^{\sfE} = \mathcal{F}_n^{\sfE}.
	\]
	Is it necessarily true that
	\[
		T_n^{\sfE(r)} = \mathcal{F}_n^{\sfE},
		\qquad r > 1?
	\]
\end{problem}

Finally, while the asymptotic zero distribution of Faber polynomials is relatively well understood,
much less is known for Chebyshev polynomials. Indeed in the context of Chebyshev polynomials relative to a compact set \(K\) we have the following quote.
\begin{quote}
\emph{It seems to be a very difficult problem to determine the distribution 
of the zeros (if it exists at all) for general~$K$.}
\hfill --- Saff \& Totik~\cite{saff-totik90}
\end{quote}
 infinite compact subset of $\C$ with
connected interior and complement, then the zeros of $T_n^{\sfE}$ stay away from $\partial \sfE$ for
large $n$ if and only if $\partial \sfE$ is an analytic Jordan curve.
However, even in this case, the limiting zero distribution remains unclear.

\begin{problem}
	Let $\sfE$ be an analytic Jordan curve.
	What are the weak-$*$ limit points of $\nu(T_n^{\sfE})$ as $n \to \infty$?
\end{problem}

For Faber polynomials, it is known from \cite{kuijlaars-saff95} and later \cite{minadiaz09} that if
$\sfE$ is a piecewise analytic Jordan curve with corners, then
\[
	\nu(F_{n_k}^{\sfE}) \xrightarrow{\ast} \mu_{\sfE},
	\qquad k \to \infty
\]
for some subsequence of degrees \(n_k\).
We know from Theorem \ref{thm:saff_stylianopoulos} that if \(\sfE\) is a non-convex polygon, then \(\nu(T_n^{\sfE})\xrightarrow{\ast}\mu_{\sfE}\) as \(n\to\infty\). It would be interesting to know the following.	
\begin{problem}[{\cite[Open Problem 3.10]{christiansen-simon-zinchenko22}}]
	Let $\sfE$ denote a convex polygon.
	What are the weak-$*$ limit points of $\nu(T_n^{\sfE})$ as $n \to \infty$?
\end{problem}
This question could be rephrased in the greater generality of piecewise analytic Jordan curves with corners.

A potentially simpler model problem for studying zeros of Chebyshev polynomials but where nothing seems to be known is the following, see also Figure \ref{fig:lem}.

\begin{problem}
	Let $P$ be a monic polynomial of degree $m$, and define
	\[
		\sfE(r) = \{ z \in \C : |P(z)| = r^m \}.
	\]
	What are the weak-$*$ limit points of $\nu(T_n^{\sfE(r)})$ as $n \to \infty$
	along subsequences with $n \not\equiv 0 \pmod m$?
\end{problem}

Note that $T_{nm}^{\sfE(r)} = P(z)^n$, and hence this case is already understood.

\section*{Acknowledgement} The author is grateful for support from \textit{Odysseus Grant G0DDD23N from the Research Foundation Flanders} and \textit{The G\"{o}ran Gustafsson Foundation for Research in Natural Sciences and Medicine}. This text is a refined version of the Doctoral Thesis \cite{rubin24-2}. Constructive comments and suggestion supplied to the author by his former supervisor Jacob S. Christiansen as well as Bernhard Beckermann, Arno Kuijlaars, Andrei Mart\'{i}nez-Finkelshtein, Klaus Schiefermayr, Margaret Stawiska Friedland, and the anonymous referees are greatly appreciated.

\end{document}